\newcommand{\N}{\mathbb{N}}
\newcommand{\R}{\mathbb{R}}
\newtheorem{theorem}{Theorem}[section]
\newtheorem{lemma}[theorem]{Lemma}
\newtheorem{corollary}[theorem]{Corollary}
\newtheorem{proposition}[theorem]{Proposition}
\newtheorem{definition}[theorem]{Definition}
\newtheorem{remark}[theorem]{Remark}
\newtheorem{example}[theorem]{Example}
\renewcommand{\a}{\alpha}
\renewcommand{\b}{\beta}
\newcommand{\g}{\gamma}
\renewcommand{\d}{\delta}
\newcommand{\e}{\epsilon}
\newcommand{\1}{\mathbf{1}}
\newcommand{\0}{\mathbf{0}}
\newcommand{\av}[1]{\left|{#1}\right|}
\newcommand{\ip}[2]{\left\langle{{#1}},{{#2}}\right\rangle}
\renewcommand{\l}{\left}
\renewcommand{\r}{\right}
\newcommand{\be}{\begin{enumerate}}
\newcommand{\bi}{\begin{itemize}}
\newcommand{\ee}{\end{enumerate}}
\newcommand{\ei}{\end{itemize}}
\renewcommand{\P}{\mathbb{P}}
\newcommand{\E}{\mathbb{E}}
\newcommand{\tmu}{\widetilde{\mu}}
\newcommand{\Spec}{\mathrm{Spec}}
\newcommand{\tr}{\mathrm{Tr}}
\newcommand{\diag}{\mathrm{diag}}
\newcommand{\mc}[1]{\mathcal{{#1}}}
\newcommand{\df}{{{\mathsf{deg}}(f)}}
\newcommand{\dl}{{{\mathsf{deg}}(\lambda)}}
\newcommand{\SHS}{\mathsf{SHS}}
\title{Stochastic hybrid systems in equilibrium: moment closure,
  finite-time blowup, and exact solutions}
\author {Lee DeVille$^1$, Sairaj Dhople$^2$,\\
  Alejandro~D.~Dom\'{i}nguez-Garc\'{i}a$^{3}$, and Jiangmeng
  Zhang$^3$  \vspace{10pt} \\ $^1$ Dept. of Mathematics, University of
Illinois \\ $^2$ Dept. of Electrical and Computer Engineering,
University of Minnesota \\ $^3$ Dept. of Electrical and Computer
Engineering, University of Illinois }
\begin{document}
\maketitle

\begin{abstract}
  We present a variety of results analyzing the behavior of a class of
  stochastic processes --- referred to as Stochastic Hybrid Systems
  (SHSs) --- in or near equilibrium, and determine general conditions
  on when the moments of the process will, or will not, be
  well-behaved.  We also study the potential for finite-time blowups
  for these processes, and exhibit a set of random recurrence
  relations that govern the behavior for long times.  In addition, we
  present a connection between these recurrence relations and some
  classical expressions in number theory.
\end{abstract}

\section{Introduction}

\subsection{Background and motivation}

In this paper, we consider a class of stochastic processes referred to
as Stochastic Hybrid Systems (SHSs), and present results about their
behavior in or near equilibrium.

One motivation for considering SHSs is if we want to model a system
represented by a state that continuously evolves in time
(e.g. according to an ODE or an SDE), with the additional complication
that the variable, or even the state space itself, can undergo rapid
changes.  To consider one specific example, consider an engineered
system where the state of the system is given by a vector $x(t)\in
\R^d$ when the system is in ``normal'' operation, but when the system
is in an ``impaired'' state, only the first $d'<d$ variables of the
state are able to evolve, and the remainder stay fixed.  Moreover,
assume that the switching between normal and impaired operation happens
randomly, but in a manner that depends on the state $x$.  

The study of SHSs has a long history, going back at least
to~\cite{Bellman.54}; the main theoretical foundations of the field
were laid out in the book~\cite{Davis-1993}.  There is a large
literature devoted to the understanding the stability analysis of such
systems~\cite{Chatterjee.Liberzon.04, Chatterjee.Liberzon.06,
  Yin.Zhu.book} (this stability is typically understood in a moment or
almost-sure sense) and more recent work aimed at developing methods to
explicitly compute or estimate observables of such
systems~\cite{HespanhaSingh-2005, Hespanha-2005, Hespanha-2006}.
Additionally, metastability and large deviations in SHSs were studied
in~\cite{Bressloff.Newby.14} using path-integral methods.  SHSs
represent a powerful formalism that has been applied to many fields,
including: networked control
systems~\cite{AntunesHespanhaSilvestreFeb13}, power
systems~\cite{TCAS13}, system reliability theory~\cite{Dhople2014158},
and chemical reaction dynamics~\cite{SinghHespanhaFeb11}. A recent
review of the state of the art of such systems, that also contains an
extensive history, bibliography, and list of applications of SHSs, is
the comprehensive review~\cite{Teel.Subbaraman.Sferlazza.14}.

The state space of an SHS is comprised of a \textit{discrete state}
and a \textit{continuous state}; the pair formed by these is what we
refer to as the \textit{combined state} of the SHS. One can think of a
SHS as a family of stochastic differential equations (SDEs) for the
continuous state that are indexed by the discrete state. The discrete
state changes stochastically, and we think of this as switching
between SDEs.  Additionally, each discrete-state transition is
associated with a reset map that defines how the pre-transition
discrete and continuous states map into the post-transition discrete
and continuous states, so that the continuous variable can be
``reset'' when the discrete state changes.  We denote the discrete
state space by $\mc Q$, and assume that the continuous state space is
$\R^d$.  Then choose $\av{\mc Q}$ different SDEs, indexed by $q\in\mc
Q$, so that when the discrete state is $q$, $X_t$ evolves according to
\begin{equation*}
  dX_t = b_q(X_t) \,dt + \sigma_q(X_t)\,dW_t.
\end{equation*}
Then, we assume that there are a family of rate functions
$\lambda_{q,q'}(x)$ and reset functions $\psi_{q,q'}(x)$ such that if
the discrete state is $q$, the probability of a jump to state $q'$ in
the next $\Delta t$ is $\lambda_{q,q'}(X_t)\,\Delta t + o(\Delta t)$,
and, if such a jump occurs, the map $\psi_{q,q'}$ is applied to the
continuous state at the time of jump.  [This process can be described
more precisely in a ``non-asymptotic'' formalism, and we do this in
Section~\ref{sec:definition}.]  From this, it follows that there is a
continuous-time process, $Q_t$, that evolves according to some law,
and the full system we are studying is
\begin{equation*}
  dX_t = b_{Q_t}(X_t)\,dt + \sigma_{Q_t}(X_t)\,dW_t.
\end{equation*}

It is not hard to show that the process defined in this way is strong
Markov~\cite[\S 25]{Davis-1993}.  To fully characterize the SHS, we
need to compute the expectation of some large class of functions
evaluated on its state space. For the purposes of this paper, we want
to understand the process in equilibrium, or on the way to
equilibrium, i.e., the statistics of the process after it has evolved
for a long time.  The existence~\cite{Davis-1993, Meyn.Tweedie.book}
and smoothness~\cite{Bakhtin.Hurth.12} of the invariant measure of
these processes has been established, and in one sense, the goal of
this paper is to explicitly compute as much about this measure as we
can, and we approach this by studying the moments of the process,
i.e. the expectation of polynomial functions of the process in
equilibrium.

Using the infinitesimal generator of the process,
following~\cite{Hespanha-2006, Dhople2014158}, we can write down a set
of differential equations for the moments of this process --- we refer
to this set of equations as the {\em moment flow equations}.  These
equations are {\em a priori} infinite-dimensional, and as we show
below, in a wide variety of circumstances they are ``inherently
infinite-dimensional,'' by which we mean that: (i) there is no
projection of the dynamics onto a finite-dimensional subspace, and
(ii) any approximate projection into a finite-dimensional subspace
behaves poorly in a sense to be made precise below.  This inherent
infinite-dimensionality is typically called a {\em moment closure
  problem} in a variety of physical and mathematical contexts, in the
sense that one cannot ``close'' the moment flow equations in a finite
way.  Examples of moment closure problems and various approaches to
handle them span a wide range, including applications in: chemical
kinetics~\cite{Milner.Gillespie.Wilkinson.11}, dynamic
graphs~\cite{Gross.etal.06, Gross.Kevrekidis.08, Rogers.11},
physics~\cite{Fry.82}, population
dynamics/epidemiology~\cite{Keeling.book, Keeling.99,
  Newman.Ferdy.Quince.04, Wilkinson.Sharkey.14}, and nonlinear
PDE~\cite{Bronski.McLaughlin.00, Colliander.etal.10}.  Our approach is
related to and inspired by the Lyapunov moment stability theory that
has been well-developed for diffusions, especially those with small
noise, over the past few decades~\cite{Khasminskii.book, Baxendale.91,
  Baxendale.92, Namachchivaya.vanRoessel.01}.

We mention that a moment closure problem for SHSs was solved nicely
in~\cite{Hespanha-2005, Hespanha-2006} and related works, but in
contrast the problem there was to find a moment closure approximation
that was valid for a finite time interval.  Since we are interested in
equilibrium or near-equilibrium statistics of the problem, we want to
study the problem on the infinite time interval.

\subsection{Overview of the results of this paper}\label{sec:overview}

In this paper, we assume that the dynamics on the continuous state are
deterministic, i.e., all of the SDEs are assumed to be ODEs, but there
is still randomness in how the discrete states switch amongst each
other.  [These types of systems are also called ``randomly switched
systems'' in the literature~\cite{Chatterjee.Liberzon.04}, but are a
subclass of the SHS formalism so it makes sense to have a unifying
name.]  The fact that the continuous evolution does not have a
diffusion term makes the analysis both easier and harder {\em a
  priori} in the sense that the determinism allows for some
simplification in certain recurrence relations, but without
diffusions, the generator is only hypoelliptic, and thus the existence
of, and convergence to, invariant measures for the process is more
difficult to establish.  We discuss all of these issues below.

In this work, we want to understand the scenario where the two parts
of the system (the ODE and the reset) have competing effects.  For
example, if the ODE and the reset both send all orbits to zero, then
their mixture does as well, and, conversely, if they send all orbits
to infinity, then the mixture does as well.  What is more interesting
is the case in which the ODE and the resets have opposite effects, and we
consider one such case here: the ODE sends orbits to infinity, but the
resets send orbits toward zero.

We give a few prototypical examples of such systems, and discuss the
results of this paper that apply to each.  In each of these cases, we
present a model the continuous variable of which is defined on the
positive real line; this can be thought of as the fundamental model of
interest, or the radial component of a system in higher dimension (see
Section~\ref{sec:radial} for a discussion about dynamics of
higher-dimensional systems).

{\bf Example 1.}  Let $\alpha,\beta >0$ and $\gamma \in [0,1]$ and
define the ODE, jump rate, and reset as
\begin{equation}\label{eq:111}
  \dot x = f(x) = \alpha x,\quad \lambda(x) = \beta x,\quad x\mapsto \gamma x.
\end{equation}
The theory presented in Section~\ref{sec:mc1} implies that the
SHS in~\eqref{eq:111} converges to an invariant distribution with all
finite moments of all orders, and this fact is independent of the
values of $\alpha,\beta,\gamma$ (although of course the moments
themselves depend on these parameters).  

{\bf Example 2.}  Next, consider the system
\begin{equation}\label{eq:211}
  \dot x = f(x) = \alpha x^2,\quad \lambda(x) = \beta x,\quad x\mapsto \gamma x.
\end{equation}
 The theory of Section~\ref{sec:blowup} tells us that, depending on
 the value of $\gamma$, this process can have multiple behaviors: if
 $\gamma > e^{-\alpha/\beta}$, then all solutions of~\eqref{eq:211}
 blow up in finite time, if $1-\alpha/\beta < \gamma <
 e^{-\alpha/\beta}$, then solutions go to zero with high probability,
 but enough of them escape to infinite fast enough that the moments of
 the solution blow up in finite time, but if $\gamma <
 1-\alpha/\beta$, then the solution converges to zero in every sense.
 (See Theorem~\ref{thm:type1} for the specific statements.)

{\bf Example 3.}  Finally, consider the system
\begin{equation}\label{eq:311}
  \dot x = f(x) = \alpha x^3,\quad \lambda(x) = \beta x,\quad x\mapsto \gamma x.
\end{equation}
Theorem~\ref{thm:type2} tells us that all solutions of this system
blowup with probability one, and this fact is independent of the
parameters $\alpha,\beta,\gamma$.

\begin{remark}
  The theory in Sections~\ref{sec:mc1} and~\ref{sec:blowup}
  generalizes the statements in Examples~1 -- 3 to arbitrary
  polynomials.  The critical difference between the three cases are
  the relative degrees, and in some cases the leading-order
  coefficients, of $f$ and $\lambda$.  Additionally, although the two
  latter cases both exhibit finite-time blowup, they are of a much
  different character.  In the quadratic case, the system blows up in
  finite time even though there are infinitely many jumps, and as
  such, exhibit a quality very much like that of an explosive Markov
  chain.  In the cubic case, the blowup is more like that seen in
  nonlinear ODEs: the system goes off to infinity in finite time and
  there are only a finite number of jumps.
\end{remark}

In Examples~1~--~3, there was a single discrete state and all jumps
mapped the state back to itself.  We also analyze the moments of the
SHS with multiple discrete states in Section~\ref{sec:mc2}.  We show
that the behavior can be characterized similarly to that of one state
in many situations, but we also exhibit new types of behavior here.
For example, we show that in many cases, the system can exhibit {\em
  marginal moment stability}, by which we mean that in equilibrium,
the system can have some moments finite, and others that are infinite;
thus the equilibrium distribution has fat tails.  In this case, we
show numerically that the equilibrium distributions have power law
behavior.

We also show that the moment flow equations have a very
strange property.  These equations are an infinite-dimensional linear
system that supports a fixed point that corresponds to the moments of
the invariant measure.  We show that any finite-dimensional truncation
of this system has only unstable fixed points, and, moreover, as the
size of truncation grows, the system has larger positive eigenvalues.
This motivates the result that the infinite-dimensional system is
ill-posed in ``the PDE sense'' in a manner analogous to a
time-reversed heat equation.  However, we then show that the minimal
amount of convexity given by Jensen's Inequality is enough to make
this system well-posed and well-behaved, and in particular it then
becomes faithful to the stochastic process.  This seems to be a
strange example of a system that is ill-behaved on a linear space
becoming well-behaved when restricted to a nonlinear submanifold of
that same space.

\subsection{Organization of manuscript}

The main results and structure of this paper are as follows.  In
Section~\ref{sec:problem}, we give a formal definition of the SHS that
we study, and define the moment flow.  Next, we identify a broad class
of assumptions for SHSs under which the moment equations are
well-defined and accurate on the infinite-time interval, but we also
show that there is a surprising subtlety that arises in the
consideration of same; these results are contained in
Sections~\ref{sec:mc1} and~\ref{sec:mc2}. Next, in
Section~\ref{sec:blowup}, we study the SHS where moment closure fails,
and in fact the SHS undergoes finite-time blowups, i.e., the process
becomes infinite in expectation, or almost surely, at a finite time.
Finally, in Section~\ref{sec:exact}, we write down a few exact
recurrence relations for SHSs and show that we can use these, at least
in some cases, to compute arbitrarily good approximations.

\section{Problem formulation}\label{sec:problem}
In this section, we provide the formal definition of the SHS, and
define its generator. Then, we use the generator together with
Dynkin's formula to develop a set of differential equations that
describe the dynamics of the moments of the SHS. Finally, we provide a
description of the SHS using radial components, which allows us to
introduce a simplified model of the SHS; analyzing this model is
essentially the focus of the remainder of the paper.

\subsection{Definition of SHS}\label{sec:definition}

Consider a countable set $\mc{Q}$ of discrete states and a family of
phase spaces $P_q$, one for each discrete state.  We assume that we
have defined a family of (random) dynamical systems $D = (D_q)$, with
$D_q$ defined on phase space $P_q$.  The main notion driving SHS is
that we want to consider a stochastic process that: (i) in each small
timestep $dt$, the system can jump from state $q$ to state $q'$ with
probability $\lambda_{q'}(q,x)\, dt$; and (ii) if it does not jump, it stays in
state $q$ and evolves under the dynamics given by $D_q$.

As such, we are combining the standard models of a dynamical system on
a state space with the notion of a discrete-state Markov process; the
system jumps amongst a countable family of states as it would {be} a Markov
chain, and whenever it is in a particular state, it evolves according
to the dynamics attached to that state space.  Solving the model is
complicated by the fact that we allow the jump rates to depend on the
state, so that we cannot (for example) determine the jump times and
then solve for the continuous flows; they are intimately connected.

For the purposes of this paper, we will assume that the dynamical
systems on each phase space are ordinary differential equations
(ODEs), but there is no significant obstruction to generalizing the
individual flows to SDEs or even general random dynamical
systems~\cite{Yin.Zhu.book}.  We now give the precise definition of a
SHS.

\newcommand{\od}{\Omega_{\mathsf{d}}}
\newcommand{\fd}{\mc{F}_{\mathsf{d}}}
\newcommand{\fc}{\mc{F}_{\mathsf{c}}}
\newcommand{\oc}{\Omega_{\mathsf{c}}}

\begin{definition} \label{shs_def}
  A {\bf stochastic hybrid system (SHS)} is a quintuple $(\mc{Q},
  \mc{P}, \Lambda, \Psi, F)$ where
  \begin{itemize}
    
  \item $\mc{Q}$ is a countable set of {\bf discrete states};
    
  \item $\mc{P} = (P_q)_{q\in\mc{Q}}$ are a collection of manifolds,
    where $P_q$ is called the {\bf $q$th continuous state space};
    
  \item $\Lambda(q,x) = (\lambda_{q'}(q,x))_{q'\in\mc{Q}}$ is a
    collection of {\bf rate functions}; the domain of $\Lambda$ is
    $\{(q,x): x\in P_q, q \in \mc{Q}\}$ with the property that $\Lambda(q,x) \ge 0$
    on its domain.  We interpret $\lambda_{q'}(q,x)$ as the rate of
    jumping to discrete state $q'$ when the state is currently $(q,x)$.

  \item $\Psi(q,x) = (\psi_{q'}(q,x))_{q'\in\mc{Q}}$ is a collection
    of {\bf reset maps}; the domain of $\Psi$ is the same as
    $\Lambda$, and we assume that $\psi_{q'}(q,x)\in P_{q'}$.  We
    interpret $\psi_{q'}(q,x)$ as the new value of the continuous
    state immediately after making a $q\mapsto q'$ transition.
    
  \item $f(q,x)$ is a collection of vector fields, each defined on
    $P_q$, i.e., $f(q,\cdot)\colon P_q\to \mc{T}P_q$, where $\mc{T}P_q$ denotes the tangent space to  $P_q$.

\end{itemize}

We will denote the {\bf flow map generated by $f(q,\cdot)$} by
$\varphi^t(q,\cdot)$, i.e.,  
\begin{equation*}
\varphi^0(q,x) = x,\quad 
\varphi^s(q,\varphi^t(q,x)) = \varphi^{s+t}(q,x), \quad \mbox{and }
  \frac{d}{dt} \varphi^t(q,x) = f(q,\varphi^t(q,x)).
\end{equation*}

The {\bf continuous state space} is the disjoint union
$\coprod_{q\in\mc{Q}} P_q$ and we call $x\in\coprod_{q\in\mc{Q}} P_q$
a continuous state.  When all of the $P_q$ are the same, we abuse
notation and consider one copy of $P_q$ to be the entire continuous
state space.

We define the {\bf state} of the process as the pair $(Q_t,X_t)$, and
describe its evolution as follows.  Let us first assume that
$(Q_t,X_t)$ is known a.s. for some $t>0$.  Let $S^{(q)}, q\in \mc{Q}$
be iid exponential random variables with parameter~1, i.e.,
$\P(S^{(q)} > z) = e^{-z}$ for all $z\ge 0$, that are also independent
of $\{(Q_s,X_s):s \le t\}$.  Define stopping times $T^{(q)}, q\in\mc Q$
and $T$ as follows:
\begin{equation}\label{eq:defofT}
  \int_t^{T^{(q)}} \lambda_{q}(Q_t,\varphi^{s-t}(Q_t,X_t))\,ds = S^{(q)}, \quad 
  T = \inf_{q\in\mc Q} T^{(q)}.
\end{equation}
Specifically, the $T^{(q)}$ are the times at which each of the
transitions to state $q$ would fire, and we take the first one to do
so and ignore the others.  The time $T$ will be the time of the {\bf
  next jump}. We prescribe that the discrete state remains unchanged
until the next jump, and the continuous state flows according to the
appropriate ODE, i.e.
\begin{equation}\label{eq:path}
  Q_s = Q_t, \quad X_s = \varphi^{s-t}(Q_t,X_t), \mbox{ for all } s\in[t,T).
\end{equation}
Finally, we apply the appropriate reset map:
\begin{equation}\label{eq:reset}
  Q_{T} = \arg\inf_{q\in\mc{Q}} T_{q},\quad X_{T} = \psi_{Q_{T}}(Q_t, X_{T-}),
\end{equation}
where here and below we define
\begin{equation*}
  X_{T-} = \lim_{t\nearrow T}X_t.
\end{equation*}

Now that we know $(Q_T,X_T)$, we can define the process
recursively. More specifically, let us define $T_0 = 0$ and assume
$(Q_0,X_0)$ is known.  Then, denote by $T_1$ the time returned by the 
algorithm  above, and we have defined $X_t$ for $t\in[0,T_1]$.  For any
$n\in \N$, if we know $T_n$ and $(Q_{T_n},X_{T_n})$, then choose
$S_{n+1}^{(q)}$ iid exponential, and define $T_{n+1}$ by

\begin{equation}\label{eq:defofTn}
  \int_t^{T_{n+1}^{(q)}} \lambda_{q}(Q_t,\varphi^{s-t}(Q_t,X_t))\,ds = S_{n+1}^{(q)}, \quad 
  T_{n+1} = \inf_{q\in\mc Q} T_{n+1}^{(q)}.
\end{equation}
Then define $Q_t,X_t$ for $t\in[T_n,T_{n+1}]$ as
in~(\ref{eq:path},~\ref{eq:reset}).

We also use the convention throughout of {\bf minimality}: if
{$T_\infty:=\lim_{n\to\infty} T_n <\infty$}, then we say the process
{\bf explodes} or {\bf blows up} at time $T_\infty$, and set $X_t =
\infty$ for all $t>T_\infty$.  Similarly, or if there is a $T^*$ with
$\lim_{t\nearrow T^*} X_t = \infty$, then we say the process {\bf
  explodes} or {\bf blows up} at time $T_*$, and set $X_t = \infty$
for all $t>T_*$.

We will also write $(X_t,Q_t) = \SHS(\mc{Q}, \mc{P}, \Lambda, \Psi,
\varphi)$ to mean that $(X_t,Q_t)$ is a realization of the stochastic
process constructed using the procedure above.
\end{definition}

  This definition is complicated and we want to connect the formal
  definition of the process to an intuitive notion of what it should
  do. 

  First, note that it is clear from~\eqref{eq:path} that between jumps,
  the continuous state evolves according to the appropriate ODE, and
  the discrete state remains unchanged.  Moreover,
  from~\eqref{eq:reset} we see that if the system jumps, and it jumps
  from state $q$ to state $q'$, then $Q_T$ is updated to the value
  $q'$, and the continuous state is updated by applying the map
  $\psi_{q'}(q,X_{T-})$.

  It is also clear from the definition that the process is cadlag
  (i.e., is right-continuous and has left-limits at every jump, and
  continuous otherwise).  Moreover, the process is stationary, since
  the ODEs and the rate functions do not depend explicitly on time.

  Finally, we intuitively want that, given the current state $(Q_t,
  X_t)$, the probability of a jump occurring in $[t,t+\Delta t)$
  should be the sum of all of the current jump rates, i.e., $\sum_q
  \lambda_q(Q_t,X_t)\Delta t$, and the probability of jumping to state
  $q'$ is equal to the relative proportion of $\lambda_{q'}(Q_t,X_t)$
  to this total rate(qv.~Proposition~\ref{prop:intuition} below).

\begin{example}
  Before we state and prove Proposition~\ref{prop:intuition}, let us
  first consider the special case that many readers will be familiar
  with, namely, the case where $\lambda_{q'}(q,x)$ does not depend on
  $x$ at all and can be written $\lambda_{q'}(q)$.  Then the jumps are
  given by a continuous-time Markov chain (CTMC), and the
  $\av{\mc{Q}}^2$ numbers $\lambda_{q'}(q)$ are then the transition
  rates $q\mapsto {q'}$.  Start at $t=0$, and we obtain
  \begin{equation*}
    \int_0^{T^{(q)}} \lambda_q(Q_0)\,ds = S^{(q)},\mbox{ or, }T^{(q)} = S^{(q)}/\lambda_q(Q_0).
  \end{equation*}
  It is straightforward to see that the distribution of $T^{(q)}$ is
  then an exponential random variable of rate $\lambda_q(Q_0)$, i.e.
  \begin{equation*}
    \P(T^{(q)} > t) = e^{-\lambda_q(Q_0) t}.
  \end{equation*}
  Moreover, if each
  $T^{(q)}$ is exponential with rate $\lambda_q(Q_0)$, and $T = \inf_q
  T^{(q)}$, then $T$ is exponential of rate $\sum_q \lambda_q(Q_0)$  (see, e.g.~\cite[Theorem~2.3.3]{Norris.book}).
  Finally, $\P(\arg\inf_q T_q = k) = \lambda_k(Q_0) / \sum_q
  \lambda_q(Q_0)$, and this is independent of $T$.  In words, the rate
  of jumping to state $q$ is the constant rate $\lambda_q(Q_0)$, the
  total rate of any jump occurring is the sum of the individual rates
  of each jump occurring, and the probability of any given jump
  occurring is equal to its proportion to the sum of all the rates.

  The only way in which the current framework is more complicated than
  a standard CTMC is that the transition rates change in time, and
  they change in such a way as to make them a function of the
  continuous state.

\end{example}

\begin{proposition}\label{prop:intuition}
  We consider the SHS in Definition~\ref{shs_def} Start the SHS in state
  $(Q_0,X_0)$, and define $T$ as the time of the first jump.  Then for
  $t<T$, the discrete state $Q_0$ does not change, and $X_t$ evolves
  according to the ODE, so $X_t = \varphi^t(Q_0,X_0)$.  Then
  \begin{align*}
    \P(T \le t + \Delta t | T > t) &= \Delta t\cdot \sum_q \lambda_q(Q_0,X_t)=\Delta t\cdot\sum_q \lambda_q(Q_0,\varphi^t(Q_0,X_0)),\\
    \P(Q_{t+\Delta t} = q) &= \frac{\lambda_q(Q_0,X_t)}{\sum_{q'} \lambda_{q'}(Q_0,X_t)}.
  \end{align*}
\end{proposition}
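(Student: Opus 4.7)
The plan is to unpack the definitions of $T^{(q)}$ and $T$ given in~\eqref{eq:defofT} and reduce both claims to a standard calculation with independent competing inhomogeneous clocks. The opening assertion that $Q_s = Q_0$ and $X_s = \varphi^s(Q_0, X_0)$ for $s \in [0, T)$ is just~\eqref{eq:path} specialized to $t = 0$, so no work is required there. The substance of the proposition is identifying the distribution of $T$ and of the argmin $\arg\inf_q T^{(q)}$.

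Introduce the non-decreasing function $G_q(t) := \int_0^t \lambda_q(Q_0, \varphi^s(Q_0, X_0))\,ds$. Then~\eqref{eq:defofT} is equivalent to $\{T^{(q)} > t\} = \{S^{(q)} > G_q(t)\}$, so, since each $S^{(q)}$ is unit exponential, $\P(T^{(q)} > t) = e^{-G_q(t)}$. Because the $S^{(q)}$ are jointly independent, so are the $T^{(q)}$, and hence
\[
\P(T > t) = \prod_q \P(T^{(q)} > t) = \exp\Bigl(-\sum_q G_q(t)\Bigr).
\]
Differentiating in $t$ gives $T$ the density $\bigl(\sum_q \lambda_q(Q_0, \varphi^t(Q_0, X_0))\bigr)\exp(-\sum_q G_q(t))$. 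The hazard-rate identity $\P(T \le t + \Delta t \mid T > t) = -\Delta t \cdot \frac{d}{dt}\log \P(T > t) + o(\Delta t)$ then yields the first displayed formula of the proposition, understood to leading order in $\Delta t$.

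For the second identity, compute the joint law of $(T, \arg\inf_q T^{(q)})$. The marginal density of $T^{(q)}$ at $t$ is $\lambda_q(Q_0, \varphi^t(Q_0, X_0)) e^{-G_q(t)}$, so by independence the probability that clock $q^*$ fires first and does so in $[t, t+dt]$ equals
\[
\lambda_{q^*}(Q_0, \varphi^t(Q_0, X_0)) \prod_{q \ne q^*} e^{-G_q(t)}\,dt = \lambda_{q^*}(Q_0, \varphi^t(Q_0, X_0)) e^{-\sum_q G_q(t)}\,dt.
\]
Dividing by the marginal density of $T$ computed above produces the ratio $\lambda_{q^*}/\sum_{q'} \lambda_{q'}$ claimed by the proposition (the statement $\P(Q_{t+\Delta t} = q) = \lambda_q/\sum_{q'} \lambda_{q'}$ being naturally read as the conditional probability of landing in state $q$ given that a jump occurs in the window).

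The main subtlety, minor but worth addressing, is that if $\lambda_q$ vanishes along the deterministic orbit on some interval then $G_q$ fails to be strictly increasing and $T^{(q)}$ must be defined via a right-continuous generalized inverse; the event $\{G_q(t) < S^{(q)}\}$ remains the correct description of $\{T^{(q)} > t\}$, however, so every distributional identity above is unaffected. One also needs $\sum_q \lambda_q(Q_0, \varphi^t(Q_0, X_0)) < \infty$ to guarantee $T > 0$ when $\mc{Q}$ is infinite, which is a standing finiteness hypothesis implicit throughout the paper.
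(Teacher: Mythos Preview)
Your proof is correct and follows essentially the same route as the paper: both arguments rest on the identification $\{T^{(q)} > t\} = \{S^{(q)} > \int_0^t \lambda_q(Q_0,\varphi^s(Q_0,X_0))\,ds\}$, the exponentiality and independence of the $S^{(q)}$, and a first-order expansion in $\Delta t$. The paper invokes memorylessness of each $S^{(q)}$ to compute $\P(T^{(q)}\le t+\Delta t\mid T^{(q)}>t)$ individually before summing, whereas you first multiply the survivor functions to get $\P(T>t)=\exp(-\sum_q G_q(t))$ and then differentiate---this is a cosmetic reorganization of the same computation, and your added remarks on vanishing rates and summability for infinite $\mc Q$ are welcome clarifications the paper omits.
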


\begin{proof}
  First note that $T > t$ iff $T^{(q)} > t$ for all $q$, and $T\le
  t+\Delta t$ iff $T^{(q)} \le t+\Delta t$ for some $q$.
  Next, we see that
\begin{align*}
  T^{(q)} > t &\Leftrightarrow \int_0^t \lambda_q(Q_0,X_s)\,ds < S^{(q)},\\
  T^{(q)} \le t + \Delta t &\Leftrightarrow \int_0^{t+\Delta t} \lambda_q(Q_0,X_s)\,ds \ge S^{(q)},
\end{align*}
and so
\begin{align*}
  \P(T^{(q)} \le t+\Delta t | T^{(q)} > t)
  &= \P\left(S^{(q)} \le \int_0^{t+\Delta t}\lambda_q(Q_0,X_s)\,ds \Bigg| S^{(q)} > \int_0^{t}\lambda_q(Q_0,X_s)\,ds\right)\\
  &= \P\left(S^{(q)}\le\int_0^{t+\Delta t}\lambda_q(Q_0,X_s)\,ds - \int_0^{t}\lambda_q(Q_0,X_s)\,ds\right),
\end{align*}
by the memorylessness property of exponential random variables.  But
note that for $\Delta t \ll 1$,
\begin{equation*}
  \int_0^{t+\Delta t}\lambda_q(Q_0,X_s)\,ds - \int_0^{t}\lambda_q(Q_0,X_s)\,ds = \Delta t \cdot \lambda_q(Q_0,X_t) + o(\Delta t),
\end{equation*}
and 
\begin{equation*}
  \P(S^{(q)} \le \Delta t \cdot \lambda_q(Q_0,X_t) + o(\Delta t)) = 1- \exp(-\Delta t \cdot \lambda_q(Q_0,X_t) + o(\Delta t)) = \Delta t \cdot \lambda_q(Q_0,X_t) + o(\Delta t).
\end{equation*}

In short, the probability that a jump to $q$ occurs in the next
$\Delta t$ is $\Delta t \cdot \lambda_q(Q_0,X_t)$ when $\Delta t$ is
sufficiently small, so the probability that some jump occurs is
clearly the sum of these (up to $o(\Delta t)$), and the relative
probability of it being $T_q$ is the relative proportion of the $q$th
rate with respect to the others.
\end{proof}

\begin{remark}
  Although the definition is given recursively for simplicity, it is
  not hard to see that we can choose all of the randomness of this
  process ``up front'', i.e., we can choose streams of iid
  exponentials $S_{n}^{(q)}$ for $n\in \N, q\in\mc Q$ at the
  beginning, then use these streams to determine the $n$th jump time
  $T_n$ (note that we are discarding all of the {$S_{n}^{(q')}$} that
  are not used in this step).  This allows us to define a map from any
  probability space $\Omega$ rich enough to contain the streams
  $S_{n}^{(q')}$ to $\mc{D}([0,\infty), \mc Q\times \mc P)$, the set
  of cadlag paths defined on $\mc Q\times \mathcal{P}$.  This induces
  a measure on the set of all paths, and, in particular, allows us to
  measure the probability of any event that can be determined by
  observing the paths.  When we talk about probabilities of events
  below, we are implicitly assuming that this correspondence has been
  made.  Moreover, this will even allow us to compare paths of two (or
  more) different SHS that are generated by different functions; as
  long as we have a correspondence from $\omega\in\Omega$ to the
  exponential streams $S_{n}^{(q)}(\omega)$, the paths are completely
  determined by~\eqref{eq:defofT}.  We will use this formalism
  throughout the remainder of the paper without further comment, and
  in general drop the dependence on $\omega$.
\end{remark}

\subsection{Infinitesimal generator}\label{sec:generator}

We follow the standard definition of the infinitesimal generator and
derive the generator of the process here.  Let $h\colon \mc{Q}\times
\mc{P}\to \R$ be bounded and differentiable, and   define the following linear
operator $\mc{L}$:
\begin{equation}\label{eq:defofL}
  \mc{L}h(q,x) := \lim_{\e\searrow0} \frac{\E[h(Q_{t+\e},X_{t+\e})|Q_t=q,X_t=x] - h(q,x)}{\e}.
\end{equation}
From this, we can directly  obtain {\em Dynkin's formula}:
\begin{equation}\label{eq:Dynkin}
  \frac{d}{dt}\E[h(Q_t,X_t)] = \E[\mc{L}h(Q_t,X_t)],
\end{equation}
or, said another way, the operator
\begin{equation}\label{eq:martingale}
  \mc{M}h(Q_t,X_t) = h(Q_0,X_0) + \int_0^t \mc{L}h(Q_s,X_s)\,ds
\end{equation}
is a martingale.  This allows us to extend the definition of the
domain of $\mc{L}$ through the martingale equation, and it is not hard
to show that under weak assumptions on $\Lambda$, $\Psi$, and {$f$},
the domain of $\mc{L}$ contains all polynomials~\cite{Davis-1993}.  In
particular, we can compute directly that
\begin{equation}\label{eq:generator2}
  \mc{L}h(q,x) = f(q,x)\cdot \nabla_x h(q,x) + \sum_{q'\in\mc{Q}} \lambda_{q'}(q,x)\left(h(q',\psi_{q'}(q,x)) - h(q,x)\right).
\end{equation}
To give a formal derivation of~\eqref{eq:generator2}, we compute
\begin{align*}
  &\E[h(Q_{t+\epsilon},X_{t+\epsilon}) - h(Q_t,X_t)|Q_t=q,X_t=x]\\
  &\quad= \E[h(Q_{t+\epsilon},X_{t+\epsilon}) - h(Q_t,X_t)|Q_t=q,X_t=x,\mbox{ no jump}]\P(\mbox{no jump}) \\&\quad\quad+ \sum_{q'\in\mc{Q}}\E[h(Q_{t+\epsilon},X_{t+\epsilon}) - h(Q_t,X_t)|Q_t=q,X_t=x,\mbox{ jump to $q'$}]\P(\mbox{ jump to $q'$})\\
  &= (h(q,x+\e f(q,x)) - h(q,x))\left(1-O(\epsilon)\right)+ \sum_{q'\in\mc{Q}} (h(q',\psi_{q'}(q,x))-h(q,x))(\epsilon \lambda_{q'}(q,x)) + O(\epsilon^2)\\
  &= \epsilon( f(q,x)\cdot \nabla_xh(q,x) + \sum_{q'\in\mc{Q}} \lambda_{q'}(q,x)\left(h(q',\psi_{q'}(q,x)) - h(q,x)\right)) + O(\epsilon^2),
\end{align*}
and from this and the definition of $\mc{L}$ we have
established~\eqref{eq:generator2} formally.  We have not been careful
to specify the domain of $\mc{L}$, but it is clear from this argument
that this derivation is valid for all $h\in C^1\cap L^\infty$. As is
shown in~\cite{Davis-1993}, we can then extend the domain of the
generator using~\eqref{eq:martingale} to encompass all polynomials and
indicator functions.  Strictly speaking, this means that the new
generator is an extension of the previous one, and we will use this
without further comment in the sequel.

\subsection{Moment equations}\label{sec:moments}

A quick perusal of~\eqref{eq:generator2} makes it clear that if we
assume that $f(q,\cdot)$, $\lambda_{q'}(q,\cdot)$,
$\psi_{q'}(q,\cdot)$ are polynomials in $x$, then the right-hand side
sends polynomials to polynomials.  More precisely, if $h(q,x)$ is any
function that is polynomial in $x$, then $\mc{L}h(q,x)$ is also
polynomial in $x$.  In particular, we denote $h^{(m)}_{q'}(q,x) =
x^m\delta_{q,q'}$, and we see that $\mc{L}h^{(m)}_q$ is a polynomial
in $x$, so that Dynkin's formula becomes an (infinite-dimensional)
linear ODE on the set $\{h^{(m)}_q\}_{m\in\N^d,q\in\mc{Q}}$. However,
the first challenge that we obtain is clear: if the degree of any
$f(q,\cdot)$ is greater than one, or the degree of any of the
$\lambda_{q'}(q,\cdot)$ or $\psi_{q'}(q,\cdot)$ are positive, then we
see that the degrees of the terms on the right-hand side of the
equation are higher than those on the left, and we are thus led to the
problem of {\em moment closure}. On the other hand, we still have a
linear system, even if it is infinite-dimensional.  Thus, we might
hope to make sense of this flow by writing down a semigroup on a
reasonable function space.  In fact, we show in
Section~\ref{sec:paradox} that this is in general not possible.

\subsection{Radial components}\label{sec:radial}

Consider the case where $\mathcal{P}_q = \R^d$ for all $q$, and we can
abuse notation slightly and identify $\R^d$ as the continuous state
space.  Consider the ODE 
\begin{equation*}
  \frac{d}{dt} X_t = f(Q_t,X_t)
\end{equation*}
for $X_t\in\R^d$.  Write $X_t = R_t U_t$, for $R_t\in[0,\infty)$ and
$U_t\in S^{d-1}$.  Since $R_t^2 = \sum_{k=1}^d X_{t,k}^2$, we see that
\begin{equation*}
  2R_t \frac{d}{dt}R_t = \sum_{k=1}^d 2 X_{t,k}\frac{d}{dt}X_{t,k} = \sum_{k=1}^d 2 X_{t,k}f_k(Q_t,X_t),
\end{equation*}
or
\begin{equation*}
  \frac{d}{dt}R_t = \frac{\sum_{k=1}^d 2 X_{t,k}f_k(Q_t,X_t)}{\sum_{k=1}^d X_{t,k}^2} =: \rho(Q_t,X_t).
\end{equation*}
In general, this depends on both $R_t$ and $U_t$ through $X_t$, and we
cannot project the dynamics onto $R_t$.  However, if we assume that
$\rho(q,x)$, $\lambda_{q'}(q,x)$, and $\psi_{q'}(q,x)$ are independent
of the angular component of $x$, then we can consider a purely radial
model for the dynamics and consider an SHS whose continuous state space
is the positive real axis.

More generally, consider the following: consider the system $\SHS(\mc
Q, \mc \R^d, \Lambda, \Psi, f)$, where we assume $\psi_{q'}(q,x) =
\gamma_{q,q'}x$, and define
\begin{align*}
  \overline\rho(q,r) = \sup_{\av x = r}\rho(q,x), \quad   \underline\rho(q,r) = \inf_{\av x = r}\rho(q,x),\\
  \overline\lambda_{q'}(q,r) = \sup_{\av x = r}\lambda_{q'}(q,x), \quad   \underline\lambda_{q'}(q,r) = \inf_{\av x = r}\lambda_{q'}(q,x),
\end{align*}
and further assume that the system $\SHS(\mc Q, \mc \R^d, \Lambda,
\Psi, f)$ is ergodic on sets of constant radius, then we can use the
monotonicity results of Lemma~\ref{lem:monotone} to show, for example,
that if the one-dimensional system $\SHS(\mc Q, \R^+,
\overline\Lambda, \Psi, \underline\rho)$ has finite-time blowups
w.p.~1, then so does the system $\SHS(\mc Q, \mc \R^d, \Lambda, \Psi,
f)$, and similarly for the converse statement: any moments of
$\SHS(\mc Q, \R^+, \underline\Lambda, \Psi, \overline\rho)$ that are
finite are also finite for $\SHS(\mc Q, \mc \R^d, \Lambda, \Psi, f)$.
The remainder of this paper can be thought of as concentrating on the
case of radial flows described above, and we will assume throughout
the remainder that our state space is $\R^+$.

\section{Moment Closure and Convexity  ---  One state}\label{sec:mc1}

One of the main results of this paper is that under certain conditions
on the growth of the functions $f, \lambda$ as $x\to\infty$, the
moment equations are well-behaved and useful.  We will first present
the simpler case where there is one state, i.e., $\av{\mc Q} = 1$; 
there is one reset map $\psi$; and the state space $\mc P$ is
one-dimensional, and is, in fact, the positive reals.  Due to this
simplification, we now say that $X_t = \SHS( \lambda, \psi,f)$, where
again we have the flow map $\varphi$:
\begin{equation*}
  \frac{d}{dt}\varphi^t(x) = f(\varphi^t(x)),\quad \varphi^0(x) = x.
\end{equation*}
We want to consider the case where the ODE is unstable at the origin,
so $f(0) = 0$, $f'(0) > 0$, and to balance this we want the reset map
to move towards the origin, so that $\psi(x) \le x$.  Since $\psi$
will be a polynomial, we need to choose $\psi(x) = \gamma x$ with
$\gamma \in [0,1]$.  We also assume that $f(x), \lambda(x)$ are
polynomials of degrees $\df, \dl$ respectively.

In words, we are assuming that the rate is positive for positive $x$,
and that the ODE given by $f$ has a repelling fixed point at the
origin.  We allow for $f$ to be superlinear, so that it could lead to
finite-time blowup on its own.  One natural objective is to determine
what properties of $\lambda$ will ensure no finite-time blowups.

\subsection{Moment flow equations}

The generator for $X_t = \SHS(\lambda, \gamma x,f)$ is 
\begin{equation*}
  \mc L h(x) = f(x) \frac{dh}{dx}(x) + \lambda(x) (h(\gamma x) - h(x)).
\end{equation*}
The test functions that we are interested in are $h^{(m)}(x) := x^m$,
and we want to study the motion of the moments $\mu_m := \E[X_t^m] =
\E[h^{(m)}(X_t)]$.  Plugging this in, we obtain
\begin{equation*}
  \mc L h^{(m)}(x) = f(x)\frac m x h^{(m)}(x) + \lambda(x) (\gamma^m-1)h^{(m)}(x).
\end{equation*}
Since $f(0) = 0$, $f(x)/x$ is a linear combination of terms of
nonnegative degree, so the first term is a polynomial with all powers
at least $m$. If we write
\begin{equation*}
  f(x)  = \sum_{\ell=1}^{\df}\alpha_\ell x^\ell,\quad  \lambda(x)  = \sum_{\ell=1}^{\dl} \beta_\ell x^\ell,
\end{equation*}
then, by taking expectations, we have the {\bf moment flow}
equations:
\begin{equation}\label{eq:momentflow}
  \frac{d}{dt}\mu_m = \sum_{\ell = m}^{m+\dl} C_{m,\ell} \mu_\ell,
\end{equation}
where
\begin{equation}\label{eq:Cml}
\begin{split}
  C_{m,m} &= m\alpha_1,\quad C_{m,m+\dl} = \beta_{\dl}(\gamma^m-1),\\
  C_{m,\ell} &= m \alpha_{\ell+1} + \beta_\ell(\gamma^m-1).  
\end{split}
\end{equation}

It is not hard to see that $C_{m,m+\dl}<0$ for all $m>0$, and
$C_{m,\ell} > 0$ for $m \le \ell < m+\dl-1$ and $m$ sufficiently large
(in fact, $C_{m,\ell}\to\infty$ linearly in $m$ for any fixed $\ell$).
Recall that since we assume $f'(0) > 0$, this means that $\alpha_1>0$.
Under these assumptions, we can state the theorem:

\begin{theorem}\label{thm:mc1}
  If $\df \le \dl$, then the moment flow equations have a globally
  attracting fixed point.  The set of fixed points
  for~\eqref{eq:momentflow} form a $\dl$-dimensional linear manifold
  parameterized by an infinite family of linear equations, and thus we
  cannot find this fixed point simply by using the algebraic relations
  of~\eqref{eq:momentflow}.
\end{theorem}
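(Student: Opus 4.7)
I split the theorem into two claims that are proved by different means: an algebraic description of the set of all solutions of $\mathcal{L}h^{(m)}\equiv 0$ for $m\ge 1$, and a dynamical statement that the moment flow $\mu_m(t)=\E[X_t^m]$ converges from any legitimate initial distribution to a distinguished point in that set.

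The algebraic half is essentially combinatorial. A fixed point of~\eqref{eq:momentflow} is a sequence $(\mu_\ell^*)_{\ell\ge 1}$ satisfying $\sum_{\ell=m}^{m+\dl}C_{m,\ell}\mu_\ell^* = 0$ for every $m\ge 1$. For each fixed $m$ this is a single linear equation whose coefficient at the highest index is $C_{m,m+\dl}=\beta_{\dl}(\gamma^m-1)$; this is nonzero for all $m\ge 1$ because $\gamma\in[0,1)$. Hence one may freely prescribe $\mu_1^*,\ldots,\mu_{\dl}^*$, and the equation indexed by $m$ then recursively pins down $\mu_{m+\dl}^*$ in terms of $\mu_m^*,\ldots,\mu_{m+\dl-1}^*$. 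This gives a linear bijection between $\R^{\dl}$ and the fixed-point set, confirming that no finite truncation of~\eqref{eq:momentflow} can pick out a unique fixed point: any algebraic closure scheme is underdetermined by exactly $\dl$ equations.

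For the existence of the attracting fixed point, I would run a Foster--Lyapunov argument on the test functions $h^{(m)}(x)=x^m$. The generator yields $\mathcal{L}h^{(m)}(x)=\sum_{\ell=m}^{m+\dl}C_{m,\ell}x^\ell$, a polynomial of degree $m+\dl$ with leading coefficient $\beta_{\dl}(\gamma^m-1)<0$. The hypothesis $\df\le\dl$ is used here in an essential way: the positive ODE contribution $m\alpha_{\df} x^{m+\df-1}$ has strictly smaller degree than $m+\dl$ and cannot overtake the negative leading term, so $\mathcal{L}h^{(m)}(x)\le -c_m h^{(m)}(x)+K_m$ outside a compact set. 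Combined with irreducibility on $\R^+$ (which comes from the alternating flow-then-reset structure), this standard drift condition gives a unique invariant measure $\pi$ with $\int x^m\,d\pi<\infty$ for all $m$ and convergence of the one-dimensional marginals to $\pi$.

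The subtle step, which I expect to be the main obstacle, is upgrading weak convergence to convergence of every moment; the overview warns that the linear system~\eqref{eq:momentflow} is ill-posed as a PDE-like object and therefore cannot be handled by spectral methods. The remedy is to apply the drift estimate one degree higher: starting from any initial law with finite $(m+1)$st moment, the bound on $\mathcal{L}h^{(m+1)}$ gives $\sup_{t\ge 0}\E[X_t^{m+1}]<\infty$, and this uniform integrability combined with the weak convergence already obtained implies $\mu_m(t)\to\int x^m\,d\pi$. By Dynkin's formula the limiting sequence is a fixed point, hence lies on the $\dl$-dimensional manifold above; and it is the unique attractor within the nonlinear convex cone of legitimate moment sequences because every such initial condition converges to it. The conceptual point the proof makes transparent is exactly the one flagged in Section~\ref{sec:overview}: stability is not a property of the bare linear flow~\eqref{eq:momentflow} but of its restriction to the submanifold of genuine moment sequences, cut out by Jensen-type convexity.
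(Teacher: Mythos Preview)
Your proposal is correct, and the algebraic half (the $\dl$-dimensional manifold of fixed points via the recursion on $C_{m,m+\dl}\neq 0$) matches the paper's argument essentially verbatim.

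For the dynamical half you take a genuinely different route. The paper does invoke the Meyn--Tweedie drift condition once (with $h(x)=x$) to assert existence of the invariant measure, but its proof of the theorem proper works directly on the moment-flow ODE~\eqref{eq:momentflow} rather than on the underlying process: Lemma~\ref{lem:bounded1} uses Jensen's inequality in the form $\mu_{m+\dl}\ge \mu_m^{(m+\dl)/m}$ to show that whenever $\mu_m$ exceeds an explicit threshold the right-hand side of its own equation is negative, so each moment has a bounded trajectory with an explicit ceiling $M_m$. Your argument instead stays at the level of the process, applying the Foster--Lyapunov estimate pointwise ($\mathcal{L}h^{(m)}(x)\le -c_m x^m + K_m$) for every $m$, and then extracts moment convergence from weak convergence plus the uniform-integrability bound coming from the $(m{+}1)$st drift condition. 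What the paper's approach buys is a very concrete realization of the ``convexity rescues ill-posedness'' narrative---Jensen is the visible mechanism that tames the linear system---together with explicit bounds on the equilibrium moments. What your approach buys is a cleaner justification of the actual convergence claim: the paper's proof establishes boundedness of $\mu_m(t)$ but is terse about why bounded trajectories must converge to \emph{the} fixed point, whereas your uniform-integrability step closes that gap directly.
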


We delay the proof of the theorem until later, but first we present a
paradox that makes the result a bit surprising.

\subsection{A Paradox of Posedness}\label{sec:paradox}

Using the results of~\cite{Meyn.Tweedie.book}, it follows that there
exists a unique invariant measure to which the system converges at an
exponential rate.  Choose $h(x) = x$, then $\mc L h(x)$ is a
polynomial whose leading coefficient is negative.  Thus, there is a
$b$ with $\mc L h \le -h + b \mathbf{1}_{\mc{S}}$, where $\mc{S}$ is a
compact subset of the positive reals, and thus we have a unique
invariant measure to which we converge exponentially
quickly~\cite[Theorem 14.0.1]{Meyn.Tweedie.book}.

This suggests that the moment flow equations~\eqref{eq:momentflow}
should be well-behaved and tend to an equilibrium solution in some
limit.  However the moment flow equations {\em sui generis} are
ill-posed in a very precise sense:

\begin{definition}[Ill-posed]\label{def:ill}
  Given a state space $X$ and a flow map $\varphi\colon X\times \R \to
  X$, we say that the flow is ill-posed if, for any $x_0\in X$, any
  $t>0$, and any $\e>0$, there is a $y\in X$ with $\av{x_0-y} < \e$
  and $\av{\varphi(t,x_0)-\varphi(t,y)} > 1$.  We will slightly abuse
  notation and say an ODE is ill-posed if its flow map is ill-posed.
\end{definition}

Then we have the following:

\begin{proposition}[Ill-posedness]\label{prop:ill}
  The linear system~\eqref{eq:momentflow} does not generate a strongly
  continuous semigroup on any $\ell^p$ space (or, in fact, on any
  subspace of $\R^\infty$ that contains vectors of finite support).
  Specifically, the ODE is ill-posed in the sense of
  Definition~\ref{def:ill} on any of these spaces.
\end{proposition}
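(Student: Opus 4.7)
The plan is to exploit the ``upper-triangular with a band'' structure of the matrix $C=(C_{m,\ell})$ underlying~\eqref{eq:momentflow}. Since $C_{m,\ell}=0$ for $\ell<m$ and $C_{m,m}=m\alpha_1$, the operator $A$ defined by $(A\mu)_m=\sum_{\ell=m}^{m+\dl}C_{m,\ell}\mu_\ell$ preserves every finite-dimensional subspace $V_M:=\mathrm{span}\{e_1,\ldots,e_M\}$, and on $V_M$ it is an upper-triangular $M\times M$ matrix with diagonal entries $\alpha_1,2\alpha_1,\ldots,M\alpha_1$. Hence for each $M$ there is a finite-support eigenvector $v^{(M)}\in V_M$ of $A$ with eigenvalue $M\alpha_1$; fixing $v^{(M)}_M:=1$, the remaining components are pinned down by the backward recurrence
\begin{equation*}
  v^{(M)}_m=\frac{1}{(M-m)\alpha_1}\sum_{\ell=m+1}^{\min(M,m+\dl)}C_{m,\ell}\,v^{(M)}_\ell,\qquad 1\le m\le M-1,
\end{equation*}
which is well-defined since $\alpha_1>0$ and $M-m>0$. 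The orbit $u^{(M)}(t):=e^{M\alpha_1 t}v^{(M)}$ is then a classical solution of $\dot u=Au$ living in the invariant finite-dimensional subspace $V_M$, with norm $\|u^{(M)}(t)\|_X=e^{M\alpha_1 t}\|v^{(M)}\|_X$ in any ambient sequence space $X$ containing finite-support vectors.

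Both conclusions of the proposition then follow from this family. For ill-posedness on such a space $X$, start from $x_0=0$ whose evolution is identically zero; given $\e,t>0$ pick $M$ with $e^{M\alpha_1 t}>1/\e$ and set $y:=\bigl(\e/\|v^{(M)}\|_X\bigr)v^{(M)}$, so that $\|x_0-y\|_X=\e$ while the trajectory from $y$ has norm $\e\,e^{M\alpha_1 t}>1$ at time $t$. For the absence of a $C_0$-semigroup, suppose $A$ generated one on an $\ell^p$ space (or more generally on a Banach subspace of $\R^{\N}$ containing $c_{00}$), call it $T(t)$; Hille--Yosida provides $\|T(t)\|_{\mathrm{op}}\le Ke^{\omega t}$, independent of $M$. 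But uniqueness of semigroup trajectories starting in $D(A)$ forces $T(t)v^{(M)}=e^{M\alpha_1 t}v^{(M)}$, giving $\|T(t)\|_{\mathrm{op}}\ge e^{M\alpha_1 t}$ for every $M$---a contradiction at any fixed $t>0$.

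The main subtlety is the semigroup step, specifically justifying that $T(t)v^{(M)}$ coincides with $e^{M\alpha_1 t}v^{(M)}$. On a Banach space this is immediate from the uniqueness theorem for the linear Cauchy problem $\dot u=Au$ on $D(A)$, since both sides are $X$-valued $C^1$ solutions with the same initial data and the same pointwise action of $A$. Beyond the Banach setting, the exponential bound from Hille--Yosida must be replaced by local equicontinuity of $\{T(t)\}_{t\in[0,T]}$, which in any $F$-space follows from strong continuity at $0$ via Banach--Steinhaus; the unbounded growth rate $e^{M\alpha_1 t}$ along the normalized rays $\R\cdot v^{(M)}/\|v^{(M)}\|_X$ still yields the required contradiction, so the outline applies uniformly across the class of spaces considered.
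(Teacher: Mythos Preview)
Your proof is correct and follows essentially the same strategy as the paper's: both arguments exploit the upper-triangular band structure of the matrix $C$ to produce, for each $M$, a finite-support eigenvector with eigenvalue $C_{M,M}=M\alpha_1$, and then invoke Hille--Yosida (or an equivalent growth obstruction) to rule out a $C_0$-semigroup. Your version is in fact more complete than the paper's, since you explicitly construct the eigenvector via the backward recurrence, verify the ill-posedness clause of Definition~\ref{def:ill} directly (the paper omits this step), and discuss the subtlety of identifying $T(t)v^{(M)}$ with $e^{M\alpha_1 t}v^{(M)}$ via uniqueness of the Cauchy problem on $D(A)$.
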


\begin{proof}
  Noting that the system is upper-triangular, and that the diagonal
  elements increase without bound, it is not hard to see that the
  spectrum of this matrix should be unbounded.  To be more specific:
  if we consider any $M\times M$ truncation of this matrix, it has
  eigenvalues $C_{m,m}$ with $m=1,\dots M$. It also has a basis of
  eigenvectors, which we will call $v_1,\dots, v_M$.  These
  eigenvectors embed into $\R^\infty$ in the obvious way and are thus
  eigenvalues of the matrix $A$. Let $V$ be any linear space
  containing all of the $v_k$ (NB: any $\ell^p$ space, with $1\le p
  \le \infty$ would be appropriate).  Then with $A$ considered as a
  linear operator from $V$ to itself, $C_{m,m}\in\Spec(A)$.  Recall
  from above that $C_{m,m}\to\infty$ linearly fast in $m$.  By the
  Hille--Yosida Theorem~\cite[\S 142,143]{Riesz.Sznagy.book}, this
  flow posed on $V$ does not generate a strongly continuous semigroup.
\end{proof}

\begin{remark}
  In particular, we have shown that every finite-dimensional
  truncation of the moment flow equations is linearly unstable, and in
  fact the instability worsens with the order of the truncation.
\end{remark}

\subsection{Convexity to the rescue}

The above certainly seems paradoxical.  One method from stochastic
processes tells us that the behavior of a system is well-behaved as
$t\to\infty$ (in fact, has a globally attracting fixed point), yet, on
the other hand, a method from linear analysis tells us the system is
quite ill-behaved (being ill-posed on finite time domains).

However, the linear system~\eqref{eq:momentflow} does not contain all
of the information given by the problem.  This flow is given by the
flow of moments of a stochastic process.  In this light, the vectors
$v_k$ from the proof above are ``illegal'' perturbations, because
there can be no random variable, supported on the positive reals,
whose moments are given any $v_k$ as defined in the proof of
Proposition~\ref{prop:ill}. In fact, there can be no random variable
whose moments are given by a vector with entries that go to zero, or
even has bounded entries.  This is due to Jensen's
Inequality~\cite{Billingsley.book}:

\begin{theorem}[Jensen's Inequality]
 If $g(\cdot)$ is any convex function and $X$ a real-valued random
 variable, then
\begin{equation}\label{eq:Jensen}
  g\left(\E[X]\right)\le \E[g(X)],
\end{equation}
with equality only if $X$ is ``deterministic'', i.e., the distribution
for $X$ is an atom.  In particular, since $g(x) = x^p$ is convex on
$[0,\infty)$ for any $p>0$, if $X\ge0$, then
\begin{equation*}
  \mu_q^{p/q} = \left(\E[X^q]\right)^{p/q} \le \E[X^p] = \mu_p,
\end{equation*}
or, equivalently,
\begin{equation}\label{eq:momentpq}
  \mu_p \ge \mu_q^{p/q}\mbox{ for all } p\ge q.
\end{equation}
\end{theorem}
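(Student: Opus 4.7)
The proof plan rests on the supporting-hyperplane characterization of convex functions: for any convex $g$ and any point $y_0$ in the interior of its effective domain, there exists a subgradient $a$ (in fact, one may take $a$ to be any value between the left and right derivatives of $g$ at $y_0$) such that $g(y) \ge g(y_0) + a(y - y_0)$ for every $y$ in the domain. My first step would be to apply this with $y_0 = \E[X]$, assuming the expectation is finite and lies in the interior of the domain of $g$. The result is the pointwise almost-sure inequality $g(X) \ge g(\E[X]) + a(X - \E[X])$.

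Next, I would take expectations of both sides. The linear term vanishes because $\E[a(X - \E[X])] = a(\E[X] - \E[X]) = 0$, leaving $\E[g(X)] \ge g(\E[X])$, which is~\eqref{eq:Jensen}. For the equality clause, if the inequality is tight in expectation then the nonnegative random variable $g(X) - g(\E[X]) - a(X - \E[X])$ must vanish almost surely; when $g$ is strictly convex, this forces $X = \E[X]$ a.s., so the law of $X$ is an atom, i.e., $X$ is deterministic.

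For the specific consequence~\eqref{eq:momentpq}, I would apply Jensen's inequality not directly to $X$ but to the derived random variable $Y = X^q$, paired with the convex function $g(y) = y^{p/q}$. Since $p \ge q > 0$, the exponent $p/q \ge 1$, so $y \mapsto y^{p/q}$ is indeed convex on $[0,\infty)$; note also that $Y \ge 0$ since we assumed $X \ge 0$, so the nonnegativity needed to make sense of $y^{p/q}$ is automatic. Jensen's inequality then yields $(\E[Y])^{p/q} \le \E[Y^{p/q}]$, which rewrites directly as $\mu_q^{p/q} \le \mu_p$.

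The main subtlety I expect is the boundary and integrability bookkeeping. If $\E[X]$ sits on the boundary of the domain of $g$ rather than its interior, the subgradient need not exist in the usual sense, and one must argue with one-sided derivatives or treat the case separately. For the moment application, one must also allow $\mu_q$ or $\mu_p$ to be infinite, in which case the inequality either holds trivially or needs to be read in the extended reals; these concerns can be dispatched by first truncating $X$ to $X \wedge N$, applying the inequality, and then letting $N \to \infty$ via monotone convergence. Everything else is routine.
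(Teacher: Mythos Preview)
Your argument is the standard supporting-line proof of Jensen's inequality and is correct, including the derivation of~\eqref{eq:momentpq} by applying Jensen to $Y = X^q$ with exponent $p/q \ge 1$. The paper does not supply its own proof of this theorem; it simply cites the result from Billingsley's \emph{Probability and Measure}, so there is no in-paper argument to compare against. Your proof is essentially the one found in that reference.
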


In particular, this means that the moments are not actually
independent coordinates in some vector space, and we are not allowed
arbitrary perturbations of a fixed point.  For example,
reconsidering~(\ref{eq:momentflow},~\ref{eq:Cml}), we see that the
largest moment has a negative coefficient, and from Jensen this grows
superlinearly with respect to $\mu_m$.  This will be enough to prove
stability:

\begin{lemma}\label{lem:bounded1}
  If we assume the equations~\eqref{eq:momentflow}, and, further, that
  $\mu_m$ are the moments of a random variable, then as long as
  \begin{equation}\label{eq:ineq}
    \mu_m > \left(\frac{\dl C_{m,\ell}}{|C_{m,m+\dl}|}\right)^{m/(m-\ell + \dl)}
  \end{equation}
  for each $C_{m,\ell}$ that is positive, then the right-hand side
  of~\eqref{eq:momentflow} is negative, and thus $\mu_m$ is
  decreasing.  Since $C_{m,m+\dl}<0$, we have that $\ell \in
  \{m,\dots, m+\dl-1\}$, so the denominator in the exponent
  in~\eqref{eq:ineq} is always positive. Since this puts a finite
  number of constraints on $\mu_m$, this means that $\mu_m$ is
  inflowing on a compact subset of $\R^+$ and thus has bounded
  trajectories.
\end{lemma}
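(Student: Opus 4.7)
The plan is to show that, under hypothesis \eqref{eq:ineq}, the single negative term $C_{m,m+\dl}\,\mu_{m+\dl}$ on the right-hand side of \eqref{eq:momentflow} dominates all the potentially positive lower-order terms $C_{m,\ell}\,\mu_\ell$. The crucial input is Jensen's inequality: although the moment flow equation couples $\mu_m$ to the strictly higher moment $\mu_{m+\dl}$, Jensen forces $\mu_{m+\dl}$ to be large compared both to $\mu_m$ and to each intermediate $\mu_\ell$, so once $\mu_m$ is large the highest moment always wins.

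First I would apply \eqref{eq:momentpq} twice, with $(p,q)=(m+\dl,\ell)$ and $(p,q)=(m+\dl,m)$. The first gives $\mu_\ell\le\mu_{m+\dl}^{\ell/(m+\dl)}$ and the second gives $\mu_{m+\dl}\ge\mu_m^{(m+\dl)/m}$; chaining these yields
\begin{equation*}
\frac{\mu_{m+\dl}}{\mu_\ell} \;\ge\; \mu_{m+\dl}^{(m+\dl-\ell)/(m+\dl)} \;\ge\; \mu_m^{(m+\dl-\ell)/m}
\end{equation*}
for every $\ell\in\{m,\ldots,m+\dl-1\}$. By hypothesis \eqref{eq:ineq}, whenever $C_{m,\ell}>0$ the far right strictly exceeds $\dl\,C_{m,\ell}/|C_{m,m+\dl}|$, so $|C_{m,m+\dl}|\,\mu_{m+\dl}>\dl\,C_{m,\ell}\,\mu_\ell$ for each such $\ell$.

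Summing over the at most $\dl$ indices $\ell\in\{m,\ldots,m+\dl-1\}$ for which $C_{m,\ell}>0$, I obtain $|C_{m,m+\dl}|\,\mu_{m+\dl}>\sum_{\ell=m}^{m+\dl-1}\max\{0,C_{m,\ell}\}\,\mu_\ell\ge\sum_{\ell=m}^{m+\dl-1}C_{m,\ell}\,\mu_\ell$, and therefore $\tfrac{d}{dt}\mu_m<0$. Since \eqref{eq:ineq} can fail only on the bounded set where $\mu_m$ lies below the maximum $\mu_m^\ast$ of the finitely many thresholds $\bigl(\dl\,C_{m,\ell}/|C_{m,m+\dl}|\bigr)^{m/(m+\dl-\ell)}$, the vector field in the $m$-th coordinate points strictly inward at $\mu_m=\mu_m^\ast$, and a standard comparison argument gives $\mu_m(t)\le\max\{\mu_m(0),\mu_m^\ast\}$ for all $t$, yielding the claimed bounded trajectories.

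I do not expect a substantive obstacle; the argument is essentially bookkeeping once Jensen is invoked in the correct form. The one point worth checking carefully is that the exponent $m/(m+\dl-\ell)$ is finite and positive, which holds because $\ell\le m+\dl-1$, and that the standing assumption that each $\mu_m(t)$ is a genuine moment of a random variable supported on $\R^+$ is what legitimizes applying \eqref{eq:momentpq} pointwise in $t$ rather than only at an equilibrium.
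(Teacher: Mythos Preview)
Your proposal is correct and follows essentially the same route as the paper: both arguments invoke Jensen's inequality twice (once between $\mu_m$ and $\mu_{m+\dl}$, once between $\mu_\ell$ and $\mu_{m+\dl}$) to obtain $\tfrac{|C_{m,m+\dl}|}{\dl}\,\mu_{m+\dl} > C_{m,\ell}\,\mu_\ell$ for each positive $C_{m,\ell}$, and then sum over the at most $\dl$ such indices. Your presentation via the ratio $\mu_{m+\dl}/\mu_\ell$ is slightly more streamlined than the paper's, which passes through the intermediate quantity $\mu_\ell^{(m+\dl)/\ell}$, but the content is identical.
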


\begin{proof}
  First recall that $C_{m,m+\dl} < 0$.  Choose an $\ell$ such that
  $C_{m,\ell} > 0$ (if none such exist, then we are done.)  If
 \begin{equation*}
    \mu_m > \left(\frac{\dl C_{m,\ell}}{|C_{m,m+\dl}|}\right)^{m/(m-\ell + \dl)},
  \end{equation*}
then
\begin{equation*}
  \mu_m^{\frac{m-\ell+\dl}m} > \frac{\dl C_{m,\ell}}{|C_{m,m+\dl}|}.
\end{equation*}
Since 
\begin{equation*}
  \mu_\ell \ge \mu_m ^{\ell/m},
\end{equation*}
this means 
\begin{align*}
  C_{m,\ell}  - \frac{|C_{m,m+\dl}|}{\dl}\mu_\ell^{(m+\dl-\ell)/\ell} &< 0\\
  C_{m,\ell}\mu_\ell  - \frac{|C_{m,m+\dl}|}{\dl}\mu_\ell^{(m+\dl)/\ell} &< 0.
\end{align*}
By Jensen again, this implies that
\begin{equation*}
  C_{m,\ell}\mu_\ell  - \frac{|C_{m,m+\dl}|}{\dl}\mu_{m+\dl}< 0.
\end{equation*}
(If $C_{m,\ell} < 0$, then the previous inequality is satisfied
trivially.)  From this we have
\begin{equation*}
  \sum_{\ell = m}^{m+\dl -1}C_{m,\ell}\mu_\ell< \sum_{\ell = m}^{m+\dl -1}\frac{|C_{m,m+\dl}|}{\dl}\mu_{m+\dl} = |C_{m,m+\dl}|\mu_{m+\dl},
\end{equation*}
and so
\begin{equation*}
  \sum_{\ell = m}^{m+\dl -1}C_{m,\ell}\mu_\ell + C_{m,m+\dl}\mu_{m+\dl} < 0.
\end{equation*}
\end{proof}

Now, we are ready to provide a formal proof to the result in Theorem~\ref{thm:mc1}.

{\bf Proof of Theorem~\ref{thm:mc1}.}

By Lemma~\ref{lem:bounded1}, the solution of~\eqref{eq:momentflow} has
bounded trajectories. In particular, choosing
\begin{equation*}
  M_m = \max_{m \le \ell \le m+\dl -1} \left(\frac{\dl C_{m,\ell}}{|C_{m,m+{\dl}}|}\right)^{m/(m-\ell+\dl)},
\end{equation*}
we have $\mu_m(t) \le M_m$ for $t$ sufficiently large.

We also note that the steady-state solution of this system satisfies
the (infinite) family of linear equations
\begin{equation*}
  \sum_{\ell = m}^{m+\dl} C_{m,\ell} \mu_\ell=0,
\end{equation*}
or
\begin{equation*}
  \mu_{m+\dl} = -\frac{1}{C_{m,m+\dl}} \sum_{\ell = m}^{m+\dl-1} C_{m,\ell} \mu_\ell.
\end{equation*}
For $m$ sufficiently large, all of the coefficients on the right-hand
side of the equation are positive.

Then. this linear system has exactly $\dl$ degrees of freedom.  Choose
$m^\star$ so that $C_{m,\ell} >0$ for $\ell = m,\dots, m+\dl-1$.
Then, if $\mu_{m^\star},\dots, \mu_{m^\star+\dl}$ are given, then the
$1$st equation gives a unique solution for $\mu_{m^\star+\dl+1}$, and
then the second equation would give a unique solution for
$\mu_{m^\star+\dl+2}$, and so on.
\qed

So, to summarize: we write down the moment flow equations from
Dynkin's formula, giving us an infinite-dimensional linear system.  We
then note that if this system is considered as a linear system with no
further structural information, then the linear system is ill-posed in
any reasonable function space, even though the stochastic process
converges to a unique invariant measure.  However, if we add on the
minimal constraint that these variables are the moments of a
stochastic process, then this is exactly what we need to make the
equations well-posed and to converge to a reasonable limit.  Yet
again, on the other hand, these equations are always degenerate in the
sense that there is a $\dl$-dimensional linear manifold of fixed
points.

\subsection{Case study I: one state + linear}

In this section, we go through all of the above computations for the
simplest possible case: the ODE, the reset map, and the jump rate are
all linear.  More specifically, we consider the system where the flow
is $dx/dt = \alpha x$, the jump rate is $\beta x$, and the reset map
is $x\mapsto \gamma x$.  Here we assume that $\alpha, \beta > 0$ and
$\gamma\in[0,1]$ (the two extreme cases of $\gamma\in\{0,1\}$ are easy
to solve explicitly).

The generator for this process is given by
\begin{equation}\label{eq:ABGgen}
  \mc Lh(x) = \alpha x \frac{dh}{dx} + \beta x \Big(h(\gamma x) - h(x)\Big). 
\end{equation}
Again defining the $k$th moment of $X_t$ by $\mu_k(t) := \E[X^k_t] =
\E[h^{(m)}(X_t)]$, we obtain
\begin{equation}\label{eq:ABGODE}
  \dot\mu_m(t) = \alpha m\mu_m + \beta(\gamma^m-1)\mu_{m+1}.
\end{equation}
If we consider this system simply as a linear system, then we run into
the same problems of ill-posedness as before; the diagonal elements of
the matrix are $\alpha, 2\alpha,\dots, N\alpha,\dots$ which go to
infinity along the positive real axis.

As before, we have Jensen coming to the rescue, and we can deduce that
if, for any $k$,
\begin{equation}\label{eq:lbk}
  \mu_k > \left(\frac{\alpha k}{\beta(1-\gamma^k)}\right)^k,
\end{equation}
then $d\mu_k/dt < 0$.  Considering the fixed point
of~\eqref{eq:ABGODE} gives us that, if we define
\begin{equation*}
  \tmu_k = \E[X_\infty^k],
\end{equation*}
then $\tmu_k$ are constant in time, and are solutions
to~\eqref{eq:ABGODE}, so that we have
\begin{equation}\label{eq:recursion}
  \tmu_{k+1} = \frac{\alpha k}{\beta(1-\gamma^k)} \tmu_k,
\end{equation}
and from~\eqref{eq:lbk} we have the bound that
\begin{equation}\label{eq:ubk}
  \tmu_k \le \left(\frac{\alpha k}{\beta(1-\gamma^k)}\right)^k.
\end{equation}
From this, we obtain a recursive relationship for all of the moments;
as long as we know $\tmu_1$, then we know $\tmu_k$ for all $k$.  As it
stands, $\tmu_1$ is free, subject to the bound~\eqref{eq:ubk} of being
less than $\alpha/(\beta(1-\gamma))$.

We might seek to get more bounds, since~\eqref{eq:ubk} comes from
considering only the relation between $\tmu_m$ and $\tmu_{m+1}$.  Let
us define
\begin{equation*}
  c_k = \frac{\alpha k}{\beta(1-\gamma^k)}, \quad  c_{a,b} = \prod_{\ell = a}^{b-1} c_\ell.
\end{equation*}
Iterating~\eqref{eq:recursion} gives $\tmu_{k+j} = c_{k,k+j} \tmu_k$,
and using Jensen's inequality again, this gives
\begin{equation*}
  \tmu_k^{(k+j)/k} \le c_{k,k+j} \tmu_k\mbox{,  \ \ or,\ \ } \tmu_k^{j/k} \le c_{k,k+j}.
\end{equation*}
Since $\tmu_1 \le \tmu_k^{1/k}$, this gives
\begin{equation*}
  \tmu_1 \le c_{k,k+j}^{1/j}.
\end{equation*}
However, it is not hard to see that this last quantity is just the
geometric mean of $c_k,c_{k+1},\dots, c_{k+j-1}$.  Thus we see that
all of this work has given us no useful information: since the $c_k$
are increasing in $k$:
\begin{equation*}
  \frac{d}{dk}c_k(\gamma) = \frac{1+\gamma^k(k\ln\gamma-1)}{(1-\gamma^k)^2},
\end{equation*}
this means that all of these geometric means are worse upper bounds than $c_1$.

It turns out that we can get one more bit of information from the
generator, namely: choose $h(x) = \ln(x)$:
\begin{equation*}
  \mc{L} h(x) = \alpha x \frac 1 x + \beta x (\ln(\gamma x)-\ln(x)) = \alpha + (\beta\ln\gamma) x.
\end{equation*}
If our system is in equilibrium, then $\E[h(X_t)]$ is constant in
time, so we have 
\begin{equation*}
  0 = \alpha + \beta\ln\gamma \E[X_\infty],\quad \E[X_\infty] = \frac{-\alpha}{\beta\ln\gamma} = \frac{\alpha}{\beta\ln(1/\gamma)}.
\end{equation*}

\subsection{Maximum entropy formulation}\label{sec:me}

We know that the invariant distribution needs to satisfy the
infinitely many conditions on its moments that
\begin{equation*}
  \sum_{\ell=m}^{m+\dl} C_{m,\ell} \tmu_\ell = 0,
\end{equation*}
but that, generically, this gives rise to a $\dl$-dimensional linear
manifold of fixed points for the moment flow.  

Another way of writing this is that if we define the polynomials 
\begin{equation*}
  f_m(x) = \sum_{\ell=m}^{m+\dl} C_{m,\ell} x^\ell,
\end{equation*}
then we have
\begin{equation}\label{eq:defofpk}
  \E[f_k(X_\infty)] = 0,~~\forall k.
\end{equation}
This leads to a natural conjecture: if we define $Z$ as the random
variable that has the maximum entropy subject to satisfying
$\E[f_m(Z)] = 0$ for all $m$, then the law of $Z$ should be the same
as that of $X_\infty$.  More precisely, if we define the entropy of a
probability distribution by
\begin{equation*}
  H(\pi) = -\int_0^\infty \pi(x) \ln \pi(x) \,dx,
\end{equation*}
 define $\mc P_*$ as the set of all probability distributions that
 satisfy~\eqref{eq:defofpk}, and define
\begin{equation*}
  \pi_* = \sup_{\pi\in\mc P_*} H(\pi),
\end{equation*}
then the probability distribution of $X_\infty$ should be $\pi_*$.
In~\cite{CDC-ZDDD.14}, we present a method for efficient computation
of the maximum entropy distribution, and show that the maximum entropy
distribution is the invariant distribution for a wide variety of test
cases.

\subsection{Numerical results}

\newcommand{\dtmax}{\Delta t_{\mathsf{max}}}

In Figure~\ref{fig:One}, we plot the results of a Monte Carlo
simulation of a one-state system as described above, where we choose
\begin{equation*}
  f(x) = x^2,\quad \lambda(x) = 2x^2,\quad \gamma = 1/2.
\end{equation*}
We simulated $10^3$ realizations of the process, and ran each one
until $10^4$ numerical steps occurred.  The method we used was a
hybrid Gillespie--1st order Euler method: we choose and fix a
$\dtmax\ll 1$.  Given $X_t$, we have $\lambda(X_t)$, and we determine
the time of the next jump as $dt = -\log(U)/\lambda(X_t)$, where $U$
is a uniform $[0,1]$ random variable.  If $dt < \dtmax$, then we
integrate the ODE using the 1st-order Euler method for time $dt$
(i.e., we set $X_{t+dt} = X_t + dt \cdot f(X_t)$), then multiply by
$\gamma$.  If $dt \ge \dtmax$, then we integrate the ODE for time
$\dtmax$ and do not jump.  It is clear that in the limit as
$\dtmax\to0$, this converges in every sense to the stochastic process,
and it is also not hard to see that this is equivalent to computing
the trajectory and next jump using formulas~\eqref{eq:defofT}
and~\eqref{eq:path} by discretizing the integral in~\eqref{eq:defofT}
using timesteps of $\dtmax$.

In Figures~\ref{fig:One}a and ~\ref{fig:One}b we plot one realization of the process, in
linear and in log coordinates.  To the eye, $\log(X_t)$ looks almost
like an Ornstein--Uhlenbeck process, and this is borne out by the
distribution in Figure~\ref{fig:One}, where we see that the invariant
distribution of $X_t$ is very close to log-normal, at least to the
eye.  To check this, we plot a QQ-plot of $\log(X_t)$ versus a normal
distribution with the same mean and variance, and we see that the
distribution is not quite log-normal  ---  the fact that this plot is
concave down means that this distribution is a little ``tighter'' than
a normal distribution.  Thus the distribution of $\log(X_t)$ looks
like a Gaussian up to two or three standard deviations, but has
smaller tails.  In any case, to verify that this is not just due to
sampling error, one can plug the general log-normal into the formal
adjoint $\mc{L}^*$ derived from~\eqref{eq:defofL}, and see by hand
that log-normals are not in the nullspace.

\begin{figure}[ht]
\begin{centering}
  \includegraphics[width=0.97\textwidth]{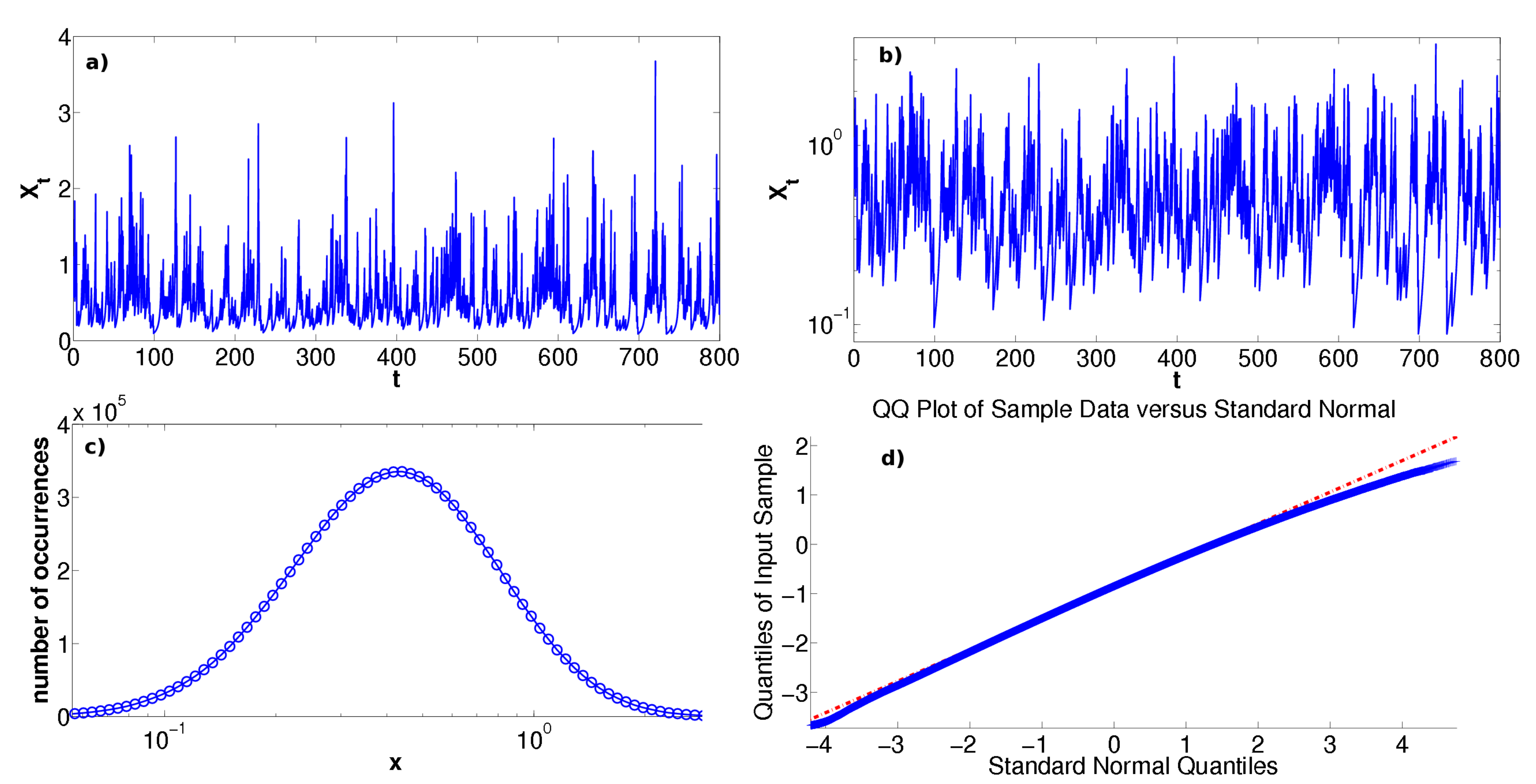}
  \caption{Plots of realizations and histograms of one-state system,
    see description in text for more detail.}
  \label{fig:One}
\end{centering}
\end{figure}

\section{Moment closure and convexity  ---  Multiple states}\label{sec:mc2}

We now extend the results of the previous section to the case where
there are multiple states, i.e., $\av{\mc Q} > 1$.  The argument
behind the method used here is the same as before: when we write down
the moment equations, if we can show that if the right-hand side of
the evolution equation for each moment has a term of higher degree,
and this term has a negative coefficient, then the previous approach
works as well.  As before, we will assume that our state space is the
positive reals, and that our reset maps are linear, i.e.
$\psi_{kl}(x) = \gamma_{kl}x$.

The main results of this section are Theorems~\ref{thm:bounded}
and~\ref{thm:unbounded}; the former gives sufficient conditions for
the moment flow system to have bounded orbits, and the latter gives
sufficient conditions for the moment flow system to have unbounded
orbits.  The technique used in the proofs of these theorems is, in principle, the
same as in the previous section: there we had a case where, in each
equation, there was a term involving the higher-order moment with a
negative coefficient.  In this case, we will show that there is a term
on the right-hand side that plays the same role as this negative
coefficient, but in this case, since the $m$th moment is effectively a
vector of length $\av{\mc Q}$, this will be a $\av{\mc Q}\times
\av{\mc Q}$ matrix whose sign-definiteness will establish stability
(or the lack thereof).

\subsection{ Moment flow equations}

\renewcommand{\deg}{{\mathsf{deg}}}
\newcommand{\lc}{{\mathsf{LC}}}
 We recall 
 \begin{equation}\tag{\ref{eq:generator2}}
   \mc{L}h(q,x) = f(q,x)\cdot \nabla_x h(q,x) + \sum_{q'\in\mc{Q}} \lambda_{q'}(q,x)\left(h(q',\psi_{q'}(q,x)) - h(q,x)\right).
 \end{equation}
Let us define, for $\theta\in\mc Q$ and $m\in \N$,
\begin{equation*}
  h_\theta^{(m)}(q,x) := 1_\theta(q)x^m = \delta_{q,\theta}x^m,\quad \mu_\theta^{(m)} := \E[h_\theta^{(m)}(Q_t,X_t)].
\end{equation*}
Note that
\begin{equation*}
  \sum_{\theta\in \mc Q} h_\theta^{(m)} = x^m,\mbox{ and thus } \mu^{(m)} := \sum_{\theta\in \mc Q} \mu_\theta^{(m)} = \E[X_t^m]
\end{equation*}
is the total $m$th moment of $X_t$.  One can think of
$\mu_\theta^{(m)}$ as the conditional $m$th moment of $X_t$,
conditional on $Q_t = \theta$, times the probability that
$Q_t=\theta$.  We plug $h_\theta^{(m)}$ into~\eqref{eq:generator2}, to
obtain
\begin{align*}
  \mc{L} h_\theta^{(m)}(q,x) 
  &= f(q,x)\cdot \nabla_x h_\theta^{(m)}(q,x)  + \sum_{\ell\in \mc Q}\lambda_{\ell}(q,x) h_\theta^{(m)}(\ell,\psi_{\ell}(q,x)) - \sum_{\ell\in \mc Q}\lambda_{\ell}(q,x) h_\theta^{(m)}(q,x)\\
  &= f(q,x) 1_\theta(q) \cdot m \cdot x^{m-1} + \sum_{\ell\in \mc Q}\lambda_{\ell}(q,x)1_\theta(\ell)(\gamma_{q,\ell}x)^m) - \sum_{\ell\in \mc Q}\lambda_{\ell}(q,x)1_\theta(q) x^{m}.
\end{align*}
We plug in $(Q_t,X_t)$ for $(q,x)$ and take expectations for each of
the three pieces separately.  
\begin{equation}\label{eq:expqtxt}
\begin{split}
  &\E[\mc {L} h_\theta^{(m)}(Q_t,X_t)] \\&\qquad= m \E[1_\theta(Q_t) f(Q_t,X_t) X_t^{m-1}] + \E[\sum_{\ell\in\mc Q}\lambda_{\ell}(Q_t,X_t)\gamma_{Q_t,\ell}^m1_\theta(\ell)X_t^m] - \E[\sum_{\ell\in \mc Q}\lambda_{\ell}(Q_t,X_t)1_\theta(Q_t)X_t^m].
\end{split}
\end{equation}
The first and third terms in the right-hand side of \eqref{eq:expqtxt} are more or less straightforward, but the
second term can be simplified in the following manner:
\begin{align*}
  \E\left[\sum_{\ell\in\mc Q}\lambda_{\ell}(Q_t,X_t)\gamma_{Q_t,\ell}^m1_\theta(\ell)X_t^m\right]
  &= \E[\lambda_\theta(Q_t,X_t)\gamma_{Q_t,\theta}^m X_t^m]\\
  &= \sum_{\ell \in \mc{Q}} \E[1_\ell(Q_t) \lambda_\theta(Q_t,X_t)\gamma_{\ell,\theta}^m X_t^m],\\
\end{align*}
and thus we have
\begin{equation}\label{eq:multimf}
\begin{split}
  &\E[\mc {L} h_\theta^{(m)}(Q_t,X_t)]\\
  &\quad= m \E[1_\theta(Q_t) f(Q_t,X_t) X_t^{m-1}] +\sum_{\ell \in \mc{Q}} \E[1_\ell(Q_t) \lambda_\theta(Q_t,X_t)\gamma_{Q_t,\theta}^m X_t^m] - \sum_{\ell\in \mc Q}\E\left[\lambda_{\ell}(Q_t,X_t)1_\theta(Q_t)X_t^m\right].  
\end{split}
\end{equation}

\begin{example}\label{ex:linear}

  The general formula \eqref{eq:multimf} is a bit complicated to parse, therefore we
  first work out a particular test case.  Assume that all of the
  functions involved are linear, i.e.
\begin{equation*}
  f(q,x) = \alpha_q x,\quad \lambda_{\ell}(q,x) = \beta_{q,\ell}x,\quad \psi_{\ell}(q,x) = \gamma_{q,\ell}x, 
\end{equation*}
then we obtain
\begin{equation*}
  \E[\mc {L} h_\theta^{(m)}(Q_t,X_t)]= m \alpha_\theta\E[1_\theta(Q_t) X_t^{m}] + \sum_{\ell\in \mc{Q}} \beta_{\ell,\theta}\gamma_{\ell,\theta}^m\E\left[1_\ell(Q_t)X_t^{m+1}\right] - \sum_{\ell\in \mc Q}\beta_{\theta,\ell}\E\left[1_\theta(Q_t)X_t^{m\textcolor{blue}{+1}}\right],
\end{equation*}

or
\begin{equation}\label{eq:mflin}
  \frac{d}{dt}\mu_\theta^{(m)}(t) = m\alpha_\theta \mu_\theta^{(m)}(t) + \sum_{\ell\in\mc Q}\beta_{\ell,\theta}\gamma_{\ell,\theta}^m \mu_{\ell}^{(m+1)}(t)- \sum_{\ell\in \mc Q}\beta_{\theta,\ell} \mu_\theta^{(m+1)}(t).
\end{equation}

\end{example}

Compare~\eqref{eq:mflin} to~\eqref{eq:ABGODE} and notice that it has
much the same form: the function we differentiate appears first with a
positive coefficient, also we have two terms of one higher degree
with alternating signs, and the positive term has a $\gamma$ in it.
What is different, and what makes this more complicated, is that the
positive term of higher degree depends on the moments in different
discrete states.  So, while we will be able to use a Jensen-like
argument to get boundedness, we have to be more careful, since we do
not know that there is any relationship between $\mu_q^{(m)}$ and
$\mu_{q'}^{(m+1)}$ from just convexity  ---  and thus we need to
consider the entire vector $\{\mu_q^{(m)}\}_{q\in\mc Q}$.

\begin{definition}
  Let $X_t = \SHS(\mc Q, \mc P, \Lambda, \Psi, F)$.  Let
  $\deg(\Lambda) = \max_{kl}\deg(\lambda_{kl})$, and define
  $\beta_{kl}$ as the coefficient of the term of degree
  $\deg(\Lambda)$ in $\lambda_{kl}(x)$, with the convention that
  $\beta_{kl} = 0$ if $\deg(\lambda_{kl}) < \deg(\Lambda)$.  Then the
  {\bf top matrix of degree $m$} of the system is the matrix $M^{(m)}$
  with coefficients
\begin{equation*}
 M^{(m)}_{kl} = \begin{cases} \gamma^m_{lk}\beta_{lk}, & k\neq l,\\ -\sum_{l\in\mc Q} \beta_{kl},& k = l.\end{cases}
\end{equation*}
\end{definition}

\subsection{Theorems for stability}

\begin{theorem}[Bounded moments]\label{thm:bounded}
  Assume that  $\mc P = \R^+$, and let $X_t = \SHS(\mc Q, \mc P, \Lambda, \Psi,
  F)$.   If $\deg(F) \le \deg(\Lambda)$ and all of the eigenvalues of
  $M^{(m)}$ are negative, then the orbit of the total $m$th moment
  $\mu^{(m)}$ is bounded under the moment flow equations; in
  particular, if $M^{(m)}$ has negative spectrum for all $m\ge 1$,
  then all of the moments have bounded orbits.
\end{theorem}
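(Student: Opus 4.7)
The plan is to replay the scalar strategy of Lemma~\ref{lem:bounded1} in vector form, using the Metzler structure of $M^{(m)}$ to manufacture a scalar Lyapunov function out of the vector of conditional moments.

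First I would isolate the top-degree part of the moment flow. Starting from~\eqref{eq:multimf}, the $f$-contribution has total $x$-degree at most $m-1+\deg(f)$, which under the hypothesis $\deg(f)\le \deg(\Lambda)$ is strictly less than $m+\deg(\Lambda)$. The only contributions of degree exactly $m+\deg(\Lambda)$ come from the leading coefficients $\beta_{kl}$ in the second and third sums of~\eqref{eq:multimf}, and bookkeeping (splitting the $\ell=\theta$ term from the second sum and absorbing it into the third under the convention $\beta_{\theta\theta}=0$) yields
\begin{equation*}
  \dot{\vec\mu}^{(m)}(t) \;=\; M^{(m)}\,\vec\mu^{(m+\deg\Lambda)}(t) \;+\; R_m(t),
\end{equation*}
where $\vec\mu^{(m)}=(\mu^{(m)}_\theta)_{\theta\in\mc Q}$ and $R_m(t)$ is a linear combination with bounded coefficients of moments $\mu^{(k)}_\theta$ with $0\le k\le m+\deg(\Lambda)-1$.

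Next I would use the Metzler structure of $M^{(m)}$: its off-diagonal entries $\gamma_{lk}^m\beta_{lk}$ are nonnegative, and the hypothesis $\Spec(M^{(m)})\subset\{\mathrm{Re}(z)<0\}$ makes $-M^{(m)}$ a nonsingular M-matrix. Standard M-matrix theory (Berman--Plemmons) then produces a strictly positive vector $v\in\R^{|\mc Q|}$ and a constant $c>0$ with $v^T M^{(m)} \le -c\,\mathbf 1^T$ componentwise; in the irreducible case, $v$ is just the left Perron--Frobenius eigenvector. Using the scalar Lyapunov function $V(t):=v^T\vec\mu^{(m)}(t)$ and dotting the moment flow with $v$ gives
\begin{equation*}
  \dot V(t) \;\le\; -c\,\mu^{(m+\deg\Lambda)}(t) \;+\; v^T R_m(t).
\end{equation*}

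Finally I would close the argument with Jensen on both sides, exactly as in Lemma~\ref{lem:bounded1}. On one hand, $V\le(\max_\theta v_\theta)\mu^{(m)}$ and $\mu^{(m+\deg\Lambda)}\ge(\mu^{(m)})^{(m+\deg\Lambda)/m}$, so the top term dominates as $-c\,\mu^{(m+\deg\Lambda)}\le -c' V^{(m+\deg\Lambda)/m}$ for some $c'>0$. On the other hand, for each $k\le m+\deg(\Lambda)-1$, Jensen applied to $x\mapsto x^{(m+\deg\Lambda)/k}$ yields $\mu^{(k)}\le(\mu^{(m+\deg\Lambda)})^{k/(m+\deg\Lambda)}$, so $|v^T R_m(t)|\le C\,(\mu^{(m+\deg\Lambda)})^{1-1/(m+\deg\Lambda)}$. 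Because $1-1/(m+\deg\Lambda)<1<(m+\deg\Lambda)/m$, the top term outpaces the remainder once $V$ is large, so $\dot V<0$ outside a compact set and $V$ has bounded trajectories. Since $v$ is strictly positive, boundedness of $V$ passes to each $\mu^{(m)}_\theta$ and hence to $\mu^{(m)}=\sum_\theta\mu^{(m)}_\theta$; applying this argument for every $m\ge 1$ yields the final clause. The main obstacle is the M-matrix step: for irreducible $M^{(m)}$ the left Perron--Frobenius eigenvector delivers $v$ immediately, but in the reducible case one must invoke the full Berman--Plemmons characterization to extract a strictly positive dominating $v$; the rest is Jensen bookkeeping patterned on the one-state proof.
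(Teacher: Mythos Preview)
Your argument is correct and in fact more explicit than the paper's own. The paper also isolates the top-degree block $M^{(m)}\mu^{(m+\deg\Lambda)}$ in the moment flow~\eqref{eq:mfnlsum}, but then proceeds differently: it argues that when every component $\mu_q^{(m)}$ exceeds $1/\epsilon$ the higher moments dominate the diagonal $A$ term, rescales to obtain a differential inequality of the form $\epsilon^{1/m}\dot\mu^{(m)}\le \epsilon^{1/m}A\mu^{(m)}+M^{(m)}\mu^{(m)}$, and then invokes the stability of $\dot z=M^{(m)}z$ on the positive orthant together with Hartman--Grobman structural stability to conclude inflow on large balls. That step is somewhat heuristic, because $M^{(m)}$ has entries of both signs and the passage from $M^{(m)}\mu^{(m+1)}$ to $M^{(m)}\mu^{(m)}$ via componentwise Jensen does not literally give a componentwise inequality.

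Your route sidesteps exactly this difficulty: by exploiting the Metzler structure and pulling out a strictly positive left vector $v$ with $v^{T}M^{(m)}\le -c\mathbf 1^{T}$, you collapse the vector problem to a scalar Lyapunov inequality before applying Jensen, so the sign issue never arises. What the paper's argument buys is brevity and a direct dynamical picture (perturbation of the linear sink $\dot z=M^{(m)}z$); what yours buys is a fully quantitative differential inequality $\dot V\le -c'\,V^{(m+\deg\Lambda)/m}+C\,(\mu^{(m+\deg\Lambda)})^{1-1/(m+\deg\Lambda)}$ and a clean invocation of M-matrix theory that handles the reducible case uniformly. One small point worth making explicit in your write-up: when bounding $|v^{T}R_m|$ you pass from the conditional moments $\mu_\theta^{(k)}$ appearing in $R_m$ to the total moments $\mu^{(k)}$ via $0\le\mu_\theta^{(k)}\le\mu^{(k)}$, and the Jensen bound $\mu^{(k)}\le(\mu^{(m+\deg\Lambda)})^{k/(m+\deg\Lambda)}$ should carry an additive constant to cover the regime $\mu^{(m+\deg\Lambda)}<1$; neither affects the conclusion.
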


\begin{proof}
  Let us first consider the case where all of the functions are
  linear, as in Example~\ref{ex:linear}.  Writing the vector
  $\mu^{(m)} = \{\mu_q^{(m)}\}_q$, we can write~\eqref{eq:mflin} as
  \begin{equation}\label{eq:mfvec}
    \frac{d}{dt}\mu^{(m)} = m A\mu^{(m)} + M^{(m)} \mu^{(m+1)},
  \end{equation}
  where $A$ is the diagonal matrix with $A_{qq} = \alpha_q$.  Note
  that every entry in $A$ is positive, so the flow is linearly
  unstable at the origin.

  However, also note that if $\mu^{(m)} \gg 1$, then $\mu^{(m+1)} \gg
  \mu^{(m)}$ by Jensen, which means that~\eqref{eq:mfvec} is dominated
  by the second term.  More precisely, if we assume that $\mu_q^{(m)}
  > 1/\epsilon$ for all $q$, then $\mu_q^{(m+1)} >
  1/\epsilon^{1/m}\mu_q^{(m)}$, and thus we can write
  \begin{equation}\label{eq:eps}
 \e^{1/m}\frac{d}{dt} \mu^{(m)} \le \e^{1/m} A \mu^{(m)} + M^{(m)}\mu^{(m)}.
  \end{equation}

For $\epsilon$ sufficiently small, the first term is dominated.  Since
$M^{(m)}$ has negative spectrum, the flow $\dot z = M^{(m)}z$ is such that 
all $z(t)$'s in the positive octant will asymptotically approach the origin, and, moreover,
this is structurally stable to a sufficiently small perturbation by
the Hartman--Grobman Theorem.  Thus, for $\epsilon$ small enough, all
orbits are attracted to the origin, which means that~\eqref{eq:mfvec}
is inflowing on any ball of sufficiently large radius.
Thus~\eqref{eq:mfvec} has bounded orbits.

  Now we consider the general $f, \lambda$ (recall that we assume
  throughout that $\psi_k(q,x) = \gamma_{qk}x$.  Let us write
  \begin{align*}
    f(q,x) &= \sum_{a=1}^{A_q} \alpha_{a,q} x^a,\\
    \lambda_k(q,x) &= \sum_{b=1}^{B_{q,k}} \beta_{b,qk} x^b.
  \end{align*}
  Then, plugging into~\eqref{eq:multimf}, we have
\begin{align*}
  \E[1_\theta(Q_t) f(Q_t,X_t) X_t^{m-1}]
  &= \E[1_\theta(Q_t)\sum_{a=1}^{A_{Q_t}} \alpha_{a,Q_t} X_t^aX_t^{m-1}]\\
  &= \sum_{a=1}^{A_{\theta}} \alpha_{a,\theta} \E[1_\theta(Q_t)X_t^{a+m-1}] = \sum_{a=1}^{A_{\theta}} \alpha_{a,\theta} \mu_\theta^{(a+m-1)},\\
  \E\left[1_\theta(\ell)\gamma_{\ell,\theta}^m\lambda_{\theta}(\ell,X_t)X_t^m\right] 
  &=\E\left[1_\theta(\ell)\gamma_{\ell,\theta}^m\sum_{b=1}^{B_{\ell,\theta}} \beta_{b,\ell\theta} X_t^bX_t^m\right]=\sum_{b=1}^{B_{\ell\theta}} \gamma_{\ell,\theta}^m\beta_{b,\ell\theta}\mu_\ell^{(b+m)},\\
  \E\left[\lambda_{\ell}(Q_t,X_t)1_\theta(Q_t)X_t^m\right] &=\E\left[\sum_{b=1}^{B_{Q_t,\ell}} \beta_{b,Q_t\ell} X_t^bX_t^m\right]=\sum_{b=1}^{B_{\theta,\ell}} \beta_{b,\theta\ell}\mu_\theta^{(b+m)},
\end{align*}
or
\begin{equation}\label{eq:mfnonlin}
  \frac{d}{dt} \mu_\theta^{(m)} = m\sum_{a=1}^{A_{\theta}} \alpha_{a,\theta} \mu_\theta^{(a+m-1)}+\sum_{b=1}^{B_{\ell,\theta}} \gamma_{\ell,\theta}^m\beta_{b,\ell\theta}\mu_\ell^{(b+m)} - \sum_{b=1}^{B_{\theta,\ell}} \beta_{b,\theta\ell}\mu_\theta^{(b+m)}.
\end{equation}

Recalling the definition of $M^{(m)}$, this means that the last two
terms in \eqref{eq:mfnonlin} can be written as $M^{(m)}
\mu^{(m+\deg(\Lambda))}$  ---  note that the definition of $M^{(m)}$
includes, by design, only those coefficients that are the same degree
as the highest possible degree of all $\lambda_{kl}$.
Thus~\eqref{eq:mfnonlin} can be written as
\begin{equation}\label{eq:mfnlsum}
  \frac{d}{dt}\mu_\theta^{(m)} = \sum_{\ell=m-1}^{m+\deg(\Lambda)-1} A_\ell \mu^{(\ell)} + M^{(m)}
  \mu^{(m+\deg(\Lambda))},
\end{equation}
and clearly the same sort of asymptotic analysis done for the linear
case works here as well, since the first term is strictly dominated in
powers by the last.
\end{proof}

\begin{theorem}[Unbounded moments]\label{thm:unbounded}
  Let $\mc P = \R^+$ and $X_t = \SHS(\mc Q, \mc P, \Lambda, \Psi, F)$.
  \begin{enumerate}
  \item If $\deg(F) \le \deg(\Lambda)$, and
    \begin{enumerate}  
    \item $\ip{M^{(m)}x}y>0$ for all $x,y$ in the positive octant, or
    \item there exist $k,l$ such that $\min(M_{kl}^{(m)},M_{lk}^{(m)})
      > \max(|M_{kk}^{(m)}|, |M_{ll}^{(m)}|)$;
    \end{enumerate}
    then the orbit of the total $m$th moment $\mu^{(m)}$ is unbounded
    under the moment flow equations;
  \item If $\deg(F) > \deg(\Lambda)$, then for $m$ sufficiently large,
    all $m$th moments have unbounded orbits under the moment flow
    equations.
  \item If $\deg(F) > \deg(\Lambda)+1$, then all $m$th moments have
    unbounded orbits under the moment flow equations.
  \end{enumerate}
\end{theorem}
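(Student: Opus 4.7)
The plan is to reuse the moment flow equation \eqref{eq:mfnlsum} that drove the proof of Theorem~\ref{thm:bounded}, but to argue in the opposite direction: where before the negative-definiteness of $M^{(m)}$ produced an inflowing region for large moments, the hypotheses of Theorem~\ref{thm:unbounded} will produce an outflowing one, and Jensen's inequality will then force superlinear growth rather than bound the orbit. The proof splits into three pieces following the three cases in the statement.

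For part~(1a), I would take the inner product of both sides of \eqref{eq:mfnlsum} with a fixed strictly positive vector $y$ (say $y=\mathbf{1}$). By the hypothesis $\ip{M^{(m)}x}{y}>0$ on the positive octant, the highest-order term contributes a strictly positive quantity $\ip{M^{(m)}\mu^{(m+\deg(\Lambda))}}{y}$, which by a homogeneity/compactness argument on the unit simplex is bounded below by $c\sum_\theta \mu_\theta^{(m+\deg(\Lambda))}$ for some $c>0$. The conditional Jensen inequality $\mu_\theta^{(m+\deg(\Lambda))} \ge (\mu_\theta^{(m)})^{(m+\deg(\Lambda))/m}$ then produces a superlinear lower bound on $\tfrac{d}{dt}\ip{\mu^{(m)}}{y}$, which dominates any lower-order contribution once $\mu^{(m)}$ is large, so the orbit escapes every compact set. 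For part~(1b), I would restrict attention to indices $k$ and $l$. Summing the $k$th and $l$th components of \eqref{eq:mfnlsum}, the coefficient multiplying $\mu_k^{(m+\deg(\Lambda))}$ becomes $M_{kk}^{(m)}+M_{lk}^{(m)}$, which is strictly positive by hypothesis; similarly for $\mu_l^{(m+\deg(\Lambda))}$. The remaining cross-contributions from $M^{(m)}$ are nonnegative by construction. Applying Jensen componentwise in $k$ and $l$ yields a superlinear lower bound on $\tfrac{d}{dt}(\mu_k^{(m)}+\mu_l^{(m)})$, so this sum (and hence $\mu^{(m)}$) blows up.

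Parts~(2) and~(3) exploit the same mechanism but with the $f$-contribution in \eqref{eq:mfnonlin} playing the role of the driver. Denote by $\alpha_{\deg(F),\theta}$ the leading coefficient of $f(\theta,\cdot)$, which is positive because we are in the regime where the ODE drives orbits to infinity. The $f$-part contributes a term $m\alpha_{\deg(F),\theta}\mu_\theta^{(m+\deg(F)-1)}$. When $\deg(F) > \deg(\Lambda)+1$, this moment has strictly higher degree than every moment appearing in $M^{(m)}\mu^{(m+\deg(\Lambda))}$, so for every $m\ge 1$ it dominates and Jensen gives superlinear growth, yielding case~(3). When $\deg(F) = \deg(\Lambda)+1$, the two terms have the same degree, and the effective diagonal coefficient of $\mu_\theta^{(m+\deg(\Lambda))}$ is $m\alpha_{\deg(F),\theta} + M_{\theta\theta}^{(m)}$; since $M^{(m)}_{\theta\theta}$ is $O(1)$ in $m$, this coefficient is positive for $m$ sufficiently large, recovering case~(2). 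In either situation the argument of (1a) applies to the dominant diagonal term.

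The main obstacle lies in part~(1a): converting the pointwise positivity of the form $\ip{M^{(m)}x}{y}$ into a genuinely quantitative lower bound along the orbit. One must rule out degeneration of $\mu^{(m+\deg(\Lambda))}$ toward the boundary of the positive octant (some component $\mu_\theta$ collapsing to zero), because the hypothesis only guarantees strict positivity on the open interior. Under mild irreducibility assumptions on the jump kernel this cannot happen from a strictly positive initial condition, but without such assumptions one needs a separate bookkeeping argument, perhaps using the lower-order positive contributions in \eqref{eq:mfnlsum} to show each component stays bounded below. Parts~(1b), (2), and~(3) are more robust because they rely on positivity of a single explicit coefficient rather than on a semi-positive-definite condition.
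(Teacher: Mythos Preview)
Your approach is essentially the paper's, and parts~(2) and~(3) are identical in structure. Two small differences are worth noting. For~(1b), the paper does not invoke Jensen: it first substitutes $\nu^{(m)}=e^{-mAt}\mu^{(m)}$ (with $A=\diag(\alpha_\theta)$) to strip off the diagonal drift, then shows directly that $\tfrac{d}{dt}(\nu_k^{(m)}+\nu_l^{(m)})>0$ from the sign condition on $M^{(m)}$, whence $\mu_k^{(m)}+\mu_l^{(m)}$ grows at least like $e^{mAt}$. Your route---summing components directly and using Jensen for a superlinear lower bound---works equally well and is arguably cleaner, since the lower-order $A_\ell$ terms are already nonnegative in the linear case and there is no need to remove them. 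For~(1a), the paper is a one-liner (``the vector field is outflowing on every circle'') and does not address the boundary-degeneration issue you flag; your concern is legitimate, and the paper simply glosses over it. So your proposal is correct and, if anything, slightly more scrupulous than the original on this point.
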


\begin{proof}
  Again consider~\eqref{eq:mfnlsum}.  First assume that
  $\ip{M^{(m)}x}y>0$ for all $x,y$ in the positive octant. Since all
  of the $A_\ell$ in that formula are diagonal with positive entries,
  it follows directly that the vector field is outflowing on every
  circle.

  On the other hand, assume that there exists a $k,l$ such that
  $\min(M_{kl}^{(m)},M_{lk}^{(m)}) > \max(M_{kk}^{(m)},
  M_{ll}^{(m)})$.  Without loss of generality by renumbering, assume
  that we have $\min(M_{12}^{(m)},M_{21}^{(m)}) > \max(|M_{11}^{(m)}|,
  |M_{22}^{(m)}|)$.  Let us again consider the linear case, as the
  nonlinear case is the same. We have
  \begin{align*}
    \frac{d}{dt} \mu_1^{(m)} &= m \alpha_1  \mu_1^{(m)} + M_{12}^{(m)} \mu_2^{(m+1)} - |M_{11}^{(m)}|\mu_1^{(m+1)},\\
    \frac{d}{dt} \mu_2^{(m)} &= m \alpha_2  \mu_2^{(m)} + M_{21}^{(m)} \mu_1^{(m+1)} - |M_{22}^{(m)}|\mu_2^{(m+1)}.
  \end{align*}
  Writing $\nu^{(m)} = e^{-m A t}\mu^{(m)}$, where $A =
  \diag(\alpha_1,\alpha_2)$, we have
  \begin{align*}
    \frac{d}{dt} \nu_1^{(m)} &=  M_{12}^{(m)} \mu_2^{(m+1)} - |M_{11}^{(m)}|\mu_1^{(m+1)},\\
    \frac{d}{dt} \nu_2^{(m)} &= M_{21}^{(m)} \mu_1^{(m+1)} - |M_{22}^{(m)}|\mu_2^{(m+1)}.
  \end{align*}
  and thus 
\begin{equation*}
  \frac{d}{dt} (\nu_1^{(m)} + \nu_2^{(m)}) = (M_{21}^{(m)}- |M_{11}^{(m)}|)\mu_1^{(m+1)} + (M_{12}^{(m)}- |M_{22}^{(m)}|)\mu_2^{(m+1)} > 0,
\end{equation*}
so that this sum is always growing, and thus the corresponding sum for
$\mu^{(m)}$ grows at least exponentially.

Now consider the case where $\deg(F) = \deg(\Lambda)+1$  ---  this
implies that the first term in~\eqref{eq:mfnonlin} is of the same
degree, than all of the other terms.  Thus~\eqref{eq:mfnonlin} looks
like
\begin{equation*}
  \frac{d}{dt}\mu_m^{(m)} =  A \mu^{(m+\deg(\Lambda))} + M^{(m)}\mu^{(m+\deg(\Lambda))} + O(\mu^{(m+\deg(\Lambda)-1)},
\end{equation*}
and since the diagonals of $A$ grow linearly in $m$, for sufficiently
large $M$, all of the coefficients of $A+M^{(m)}$ are positive, and
thus the flow has unbounded orbits.

Finally, if $\deg(F) > \deg(\Lambda) + 1$, then~\eqref{eq:mfnonlin} becomes
\begin{equation*}
  \frac{d}{dt}\mu_m^{(m)} =  A \mu^{(m+\deg(F)-1)} + M^{(m)}\mu^{(m+\deg(\Lambda))} + O(\mu^{(m+\deg(F)-2)},
\end{equation*}
where the dominant term is the $A$ term, which is diagonal with
positive diagonals for all $m$.
\end{proof}

\subsection{Examples and corollaries}

We have shown above that as long as $M^{(m)}$ has all negative
eigenvalues, the $m$th moment is stable.  First we show:

\begin{corollary}
  If $0 < \gamma_{kl} < 1$ for all $k,l$, then $\E[X_t^m]$ is bounded
  above for all $t$ and for all $m$.
\end{corollary}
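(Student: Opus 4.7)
The plan is to verify the hypothesis of Theorem~\ref{thm:bounded} for every $m\ge 1$, namely that the top matrix $M^{(m)}$ is Hurwitz. The ambient setting of Theorem~\ref{thm:bounded} (in particular $\deg(F)\le\deg(\Lambda)$) is taken for granted, so the entire task is a spectral estimate on $M^{(m)}$.

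First I would compute the $l$th column sum of $M^{(m)}$ directly from its definition:
\begin{equation*}
\sum_k M^{(m)}_{kl} \;=\; -\sum_{l'}\beta_{ll'} + \sum_{k\ne l}\gamma_{lk}^m\beta_{lk} \;=\; -\beta_{ll} + \sum_{k\ne l}(\gamma_{lk}^m-1)\beta_{lk}.
\end{equation*}
Since $0<\gamma_{lk}<1$ makes each $\gamma_{lk}^m-1$ strictly negative, and the leading coefficients $\beta_{lk}$ of the nonnegative rate polynomials are themselves nonnegative, every term on the right is nonpositive, and the column sum is strictly negative as long as some $\beta_{ll}$ or some $\beta_{lk}$ with $k\ne l$ is nonzero (a mild nondegeneracy saying that each discrete state has at least one top-degree outgoing rate).

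Next I would invoke the column-version of the Gershgorin disc theorem: because $M^{(m)}$ is a Metzler matrix (nonnegative off-diagonals), each Gershgorin column disc is centered at a nonpositive diagonal entry with radius equal to the column off-diagonal sum, so its rightmost point is exactly the column sum just computed. Strictly negative column sums therefore place every eigenvalue of $M^{(m)}$ in the open left half-plane; equivalently, $-M^{(m)}$ is a nonsingular $M$-matrix. The bound is uniform in $m$ because the estimate only used $\gamma_{lk}<1$, so the spectral hypothesis of Theorem~\ref{thm:bounded} holds simultaneously for every $m\ge 1$, and the conclusion of the corollary follows immediately.

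The main issue to watch is strictness of the eigenvalue negativity: if from some discrete state $l$ no top-degree jump fires at all (all $\beta_{lk}=0$), then the $l$th column of $M^{(m)}$ is zero and Gershgorin yields only nonpositive real parts, i.e., marginal rather than strict stability. This degenerate case should be excluded either by assuming a nonzero top-degree outgoing rate from each discrete state or by noting that it is vacuous for any nondegenerate SHS; apart from it, no further difficulty arises in the argument.
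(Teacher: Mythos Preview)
Your argument is correct and is essentially the paper's own proof: both reduce to Theorem~\ref{thm:bounded} and then apply the Gershgorin circle theorem to $M^{(m)}$, using $\gamma_{lk}^m<1$ to make the off-diagonal sum strictly smaller than $|M^{(m)}_{ll}|$. Your use of \emph{column} Gershgorin is in fact the appropriate choice given the index structure of $M^{(m)}$ (the diagonal entry and the column off-diagonals both involve $\beta_{l\,\cdot}$), and your caveat about the degenerate case where every $\beta_{lk}$ vanishes for some $l$ is a fair point that the paper glosses over.
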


\begin{proof}
  From Theorem~\ref{thm:bounded}, all we need to show is that $M^{(m)}$
  has all negative eigenvalues.  Recall that
  \begin{equation*}
    M^{(m)}_{kl} = \begin{cases} \gamma^m_{lk}\beta_{lk}, & k\neq l,\\ -\sum_{l\in\mc Q} \beta_{kl},& k = l.\end{cases}
  \end{equation*}
  By the Gershgorin Circle Theorem, the eigenvalues of
  $M^{(m)}$ are contained in the union of the $\av{\mc Q}$ balls
\begin{equation*}
  B_k:= B\left(M^{(m)}_{kk}, \sum_{l=1}^{\av{\mc Q}}\av{M^{(m)}_{kl}}\right),
\end{equation*}
i.e., the $k$th ball is centered at the $k$th diagonal coefficient, and
whose radius is given by the sum of the absolute values of the
off-diagonal terms. Since $\gamma_{kl}<1$, and thus $\gamma_{kl}^m <
1$, we have
\begin{equation*}
  \sum_{l=1}^{\av{\mc Q}}\av{\gamma_{kl}^m M^{(m)}_{kl}} < \sum_{l=1}^{\av{\mc Q}}\av{M^{(m)}_{kl}} = \av{M^{(m)}_{kk}}.
\end{equation*}
\end{proof}

\begin{example}\label{ex:2x2}
  Let us assume that $\av{\mc Q} = 2$, so that 
\begin{equation*}
  M^{(1)} = \l(\begin{array}{cc} -\beta_{12}&\beta_{21}\gamma_{21} \\ \beta_{12}\gamma_{12}& -\beta_{21}\\\end{array}\r).
\end{equation*}
We have $\tr M^{(1)} = -\beta_{12} - \beta_{21} < 0$, and $\det
M^{(1)} = \beta_{12}\beta_{21}(1-\gamma_{12}\gamma_{21})$.  It is not
hard to see that the resulting linear system is a saddle if
$\gamma_{12}\gamma_{21} > 1$, and a sink if $\gamma_{12}\gamma_{21} <
1$.  By Theorem~\ref{thm:bounded}, if $\gamma_{12}\gamma_{21} < 1$
this means that $\E[X_t]$ is bounded above.  Of course, note that the
stability condition for $M^{(m)}$ is $\gamma_{12}^m\gamma_{21}^m < 1$,
and thus stability of the first moment implies stability of all higher
moments.  As we see below, this is a special case only if $\av{\mc Q}
= 2$.

In fact, one can get at this result from other means: notice that in
the $2\times 2$ case, all jumps $1\to 2$ are immediately followed by a
jump $2\to1$, and the aggregate effect of these jumps is to multiply
by $\gamma_1\gamma_2$.  Thus it is clear that the necessary and
sufficient condition for stability is that this product be less than
one.

\end{example}

One of the interesting observations made in the previous example is
that we do not require that all $\gamma_{kl}$ be less than one.  But
for general $\av{\mc{Q}}>2$, if we choose one or more of the
$\gamma_{kl}>1$, then we will see some unbounded moments, as in the
following example.

\begin{example}\label{ex:3x3} 
 Consider the symmetric case where we choose $\beta_{kl} = 1$ for all
 $k,l$, and choose $\gamma_{12}> 1$, but $\gamma_{13}, \gamma_{23} <
 1$, so that we have
\begin{equation*}
  M^{(m)} = \l(\begin{array}{ccc} -2&\gamma_{12}^m&\gamma_{13}^m \\ \gamma_{12}^m&-2&\gamma_{23}^m \\ \gamma_{13}^m&\gamma_{23}^m&-2\end{array}\r).
\end{equation*}
If we further choose $\gamma_{12} + \gamma_{13} < 2$ and $\gamma_{12}
+ \gamma_{23} < 2$, then again by Gershgorin theorem, it follows that $M^{(1)}$ has all
negative eigenvalues, and by Theorem~\ref{thm:bounded}, the moment flow
is stable at first order.

Since $\gamma_{12} > 1$, then for some $m>1$, $\gamma_{12}^m > 2$, and
by Theorem~\ref{thm:unbounded}, the moment flow is unstable at $m$th
order.  Thus we have a scenario where $\E[X_t]$ is bounded above for
all time, but $\E[X_t^m]$ grows without bound.
\end{example}

We saw in Example~\ref{ex:3x3} that we can construct a process where
the mean is bounded above, but some higher moments grow without bound.
In fact, it should be clear that if there is a pair $(k,l)$ with
$\gamma_{kl} > 1$ and $\gamma_{lk} > 1$, then this will generically
occur: for sufficiently large $m$, the $m$th moment is unstable.  This
can lead to some counter-intuitive effects, as seen by the following
example.

\begin{example}\label{ex:N-1}
  Consider the case where $\av{\mc{Q}} = N$, $\gamma_{12} =
  \gamma_{21} > N-1$, and all of other the $\gamma_{kl}$ are
  arbitrarily small (for the purposes of this argument, set them to
  zero), so that if the system ever enters a state other than $1$ or
  $2$, then $X_t$ is set to zero.  Let all $\beta_{kl} = 1$, so that
  the jump rates are all the same.  Specifically, this means that
  whenever there is a jump, the next discrete state is chosen
  uniformly in the others.  Start with $Q_0=1, X_0=1$.  The
  probability of never leaving $Q_t\in\{1,2\}$ after the first $k$
  jumps is then $(N-1)^{-k}$, but the multiplier from these
  transitions is $\gamma_{12}^k$, so that $\E[X_{T_k}]$ grows
  exponentially, even if $\P(X_{T_k} > 0)$ is shrinking exponentially.

  Similarly, we could see that if we choose $\gamma_{12} < N-1 <
  \gamma_{12}^2$, then $\E[X_t]$ would decay exponentially, but
  $\E[X_t^2]$ would grow exponentially, and similarly for higher
  moments.

  In short, this says that to obtain a moment instability, all one
  needs is two states exchanging back and forth, as long as their
  multipliers are large enough, even if the probability of this
  sequence of switching is small  ---  which is the content of Part 1 of
  Theorem~\ref{thm:unbounded}.
\end{example}

When we have a probability distribution with some moments finite and
others infinite, this is called\footnote{There is some disagreement in
  the literature of the use of the term ``heavy-tailed''  ---  some
  authors use the term to mean a random variable that has some
  polynomial moments infinite, some use it to mean a random variable
  with infinite variance, and yet others use it to mean a distribution
  whose mgf does not converge in the right-half plane.  We use the
  first convention, and note by this convention, the log-normal
  distributions seen above would not be considered heavy-tailed by
  this convention.} a {\bf heavy-tailed} distribution
(see~\cite{Embree.Trefethen.99, Dorogovtsev.Goltsev.Mendes.08} for
examples in dynamical systems).  The canonical example of a
heavy-tailed distribution is one whose tail decays asymptotically as a
power law, and as such, are typically associated with critical
phenomena in statistical physics~\cite{Goldenfeld.book, Stanley.99,
  Sethna.Dahmen.Myers.01, Kinouchi.Copelli.06, Mora.Bialek.11,
  Larremore.etal.11, Friedman.etal.12}.  In fact, if we assume that
$p(x)$ is the distribution of the process in equilibrium, that $p(x)
\sim x^{-\alpha}$ as $x\to\infty$, and $X_\infty$ is a realization of
this distribution, then
\begin{equation*}
  \E[X_\infty^m] = \int_0^\infty x^m p(x)\,dx \approx \int_{x_*}^\infty x^{m-\alpha}\,dx
\end{equation*}
will converge iff $m-\alpha < 1$.  From this we deduce that if
$\alpha\in(m,m-1)$, then $\E[X_\infty^m] < \infty$ but
$E[X_\infty^{m+1}] = \infty$.  We will in fact show numerical evidence
of power law tails in SHS dynamics in the next section, which leads us
to conjecture that the extreme case of Example~\ref{ex:N-1} above is
atypical.

\subsection{Numerical results}

In Figure~\ref{fig:Two}, we plot the results of a Monte Carlo
simulation for a two-state SHS.  We simulate $10^3$ realizations of
this system, and each realization was integrated until $10^4$ jumps
had occurred.

\begin{figure}[ht]
\begin{centering}
  \includegraphics[width=0.97\textwidth]{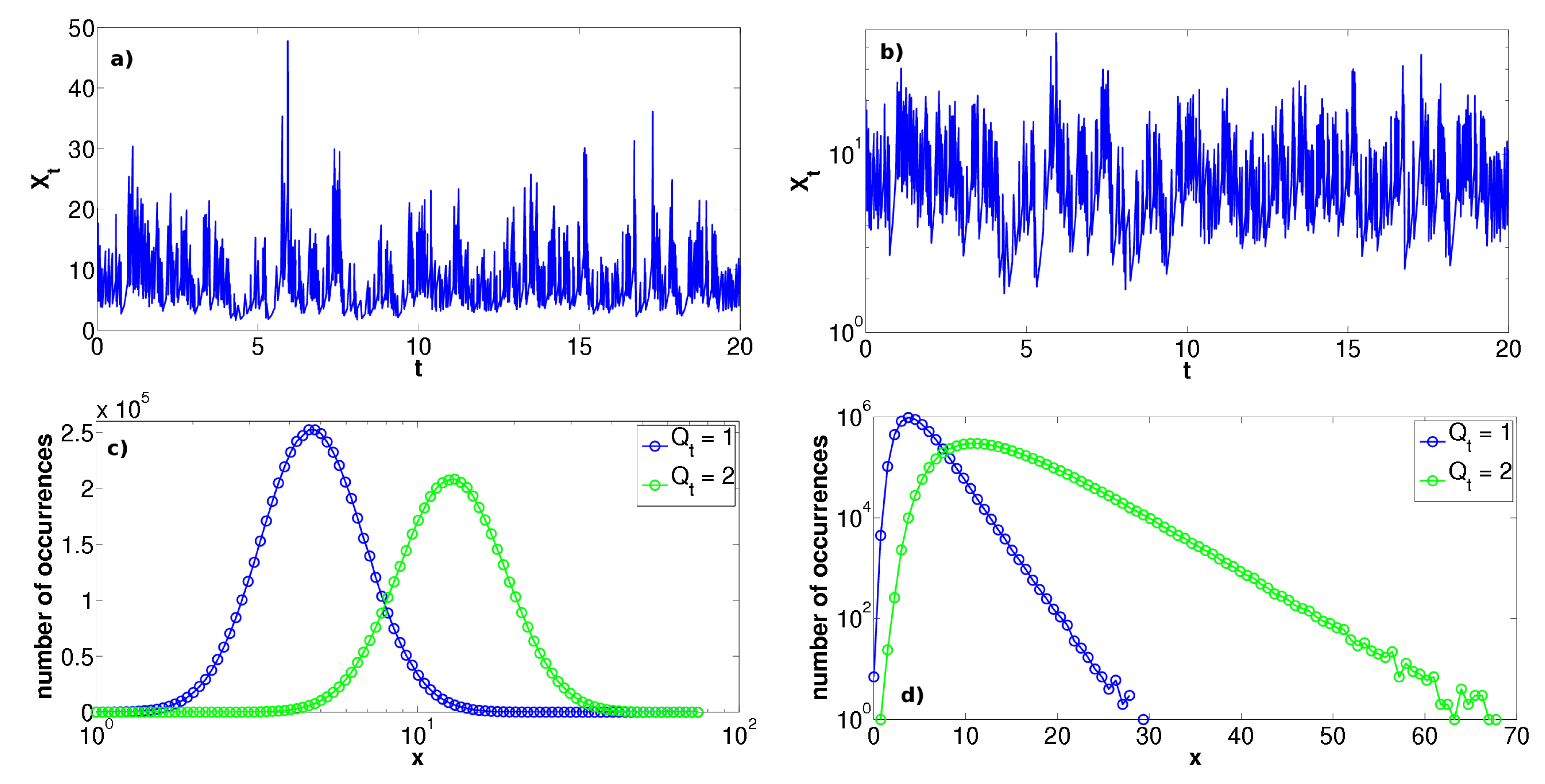}
  \caption{Two-state case that is moment stable, with exponential
    tails.  See text for description and explanation.}
  \label{fig:Two}
\end{centering}
\end{figure}

Here, we have chosen
\begin{equation*}
  f(q,x) = \alpha_q x^2,\quad \lambda_{q'}(q,x) = \beta_{q,q'} x^2, 
\end{equation*}
where $q,q'\in\{1,2\}$, and as always the resets are
$\gamma_{q,q'}x$.  We chose the $\a,\b,\g$ as follows:
\begin{equation*}
  \a = \left(\begin{array}{c} 2 \\ 1\end{array}\right),\quad
  \b = \left(\begin{array}{cc} 0&1 \\ 1&0\end{array}\right),\quad
  \g = \left(\begin{array}{cc} 0&2 \\ 1/3&0 \end{array}\right).
\end{equation*}
Since $\gamma_{12}\gamma_{21} < 1$, by the results above all moments
are stable, and this is what we observe numerically.

In Figures~\ref{fig:Two}a and ~\ref{fig:Two}b, we plot a single
trajectory of the system (in (a) we have plotted this in a
linear-linear scale, and in (b) we plot the same data in a linear-log
scale).  We have only plotted a subset of the entire realization here;
the full realization of $10^4$ jumps goes until approximately $t=254$,
and here we are only plotting $812$ jumps and cutting off at $t=20$ in
order to see more structure.

In Figure~\ref{fig:Two}c,d, we plot aggregate histograms of $X_t$,
each plot uses $0.5\times 10^6$ datapoints. Recall that each
realization of the process was run for $10^4$ jumps; we discarded the
first half of these for each realization, giving $0.5\times 10^3$
points, then aggregated across realizations.  The first observation is
that the distribution is bimodal, and this is due to the up-and-down
jumps: since $\gamma_{12}>1$ and $\gamma_{21}<1$, we expect the
typical value of $X_t$ to be much higher when $Q_t= 2$ than when it
equals one.  We separate out the data by the value of $Q_t$.  In
Figure~\ref{fig:Two}c, we plot on a log-linear scale, and it is pretty
apparent to the eye that the distributions for $X_t$, conditioned on
$Q_t$, are close to log-normal, as was the distribution in the
one-state case (q.v.~Figure~\ref{fig:One}).  We checked this
observations with QQ-plots (not presented here) and saw the same
phenomenon observed in Figure~\ref{fig:One}.  In (d), we plot a
histogram on a linear-log scale, and the data shows an exponential
tail, consistent with the prediction that all of the moments are
uniformly bounded above.

In Figure~\ref{fig:Three}, we plot the results of a Monte Carlo
simulation for a three-state SHS.  Again, we simulate $10^3$
realizations of this system, and each realization was integrated until
$10^4$ jumps had occurred.

\begin{figure}[ht]
\begin{centering}
  \includegraphics[width=0.97\textwidth]{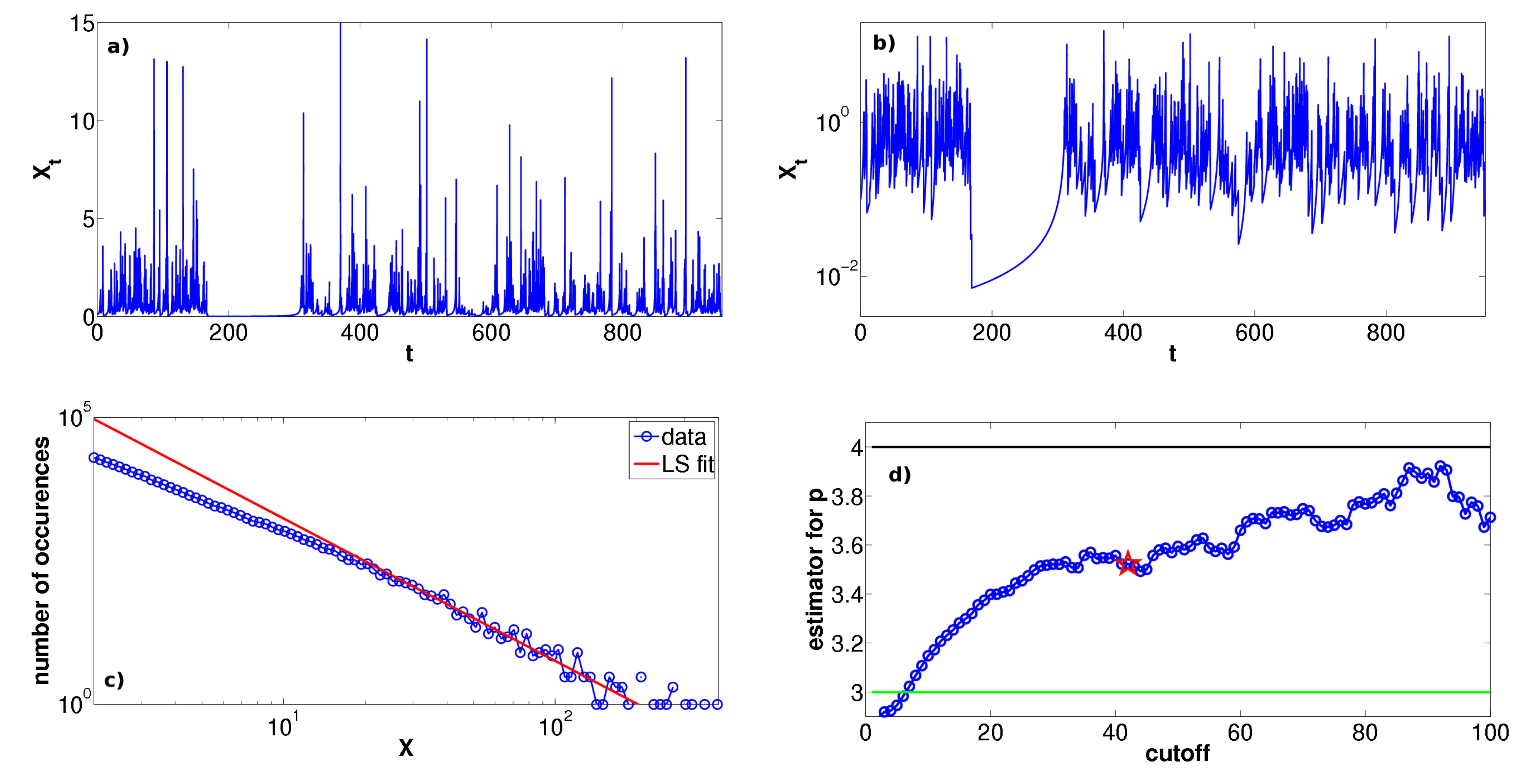}
  \caption{Three-state case that is marginally moment stable.  See
    text for description and explanation.}
  \label{fig:Three}
\end{centering}
\end{figure}

Here we have chosen
\begin{equation*}
  f(q,x) = \alpha_q x^2,\quad \lambda_{q'}(q,x) = \beta_{q,q'} x^2, 
\end{equation*}
where $q,q'\in\{1,2,3\}$, and as always the resets are
$\gamma_{q,q'}x$.  We chose the $\a,\b,\g$ as follows:
\begin{equation*}
  \a = \left(\begin{array}{c} 2 \\ 1 \\3\end{array}\right),\quad
  \b = \left(\begin{array}{ccc} 0&1&1 \\ 1&0&1 \\1&1&0\end{array}\right),\quad
  \g = \left(\begin{array}{ccc} 0&1.3&0.3 \\ 1.3&0&0.2 \\0.3&0.2&0\end{array}\right).
\end{equation*}
This means that 
\begin{equation*}
  M^{(m)} = \left(\begin{array}{ccc} -2 & (1.3)^m& (0.3)^m \\ (1.3)^m &-2 & (0.2)^m \\ (0.3)^m & (0.2)^m & -2\end{array}\right),
\end{equation*}
and one can see that 
\begin{equation*}
  (1.3)^2 + (0.3)^2 < 2,\quad (1.3)^3 > 2,
\end{equation*}
so by the theorems above, we have that the first two moments are
stable and the third is not  ---  thus as $t\to\infty$, $\E[X_t] <
\infty$ and $\E[X_t^2] < \infty$, but $\E[X_t^3]$ grows without bound.
Thus, if we see a power-law tail in the distribution, we expect it to
decay somewhere as $x^{-p}$ with $p$ between $3$ and $4$.

In Figure~\ref{fig:Three}(a,b), we again plot a single trajectory of the
system (in (a) we have plotted this in a linear-linear scale, and in
(b) we plot the same data in a linear-log scale).  We see the wide
variety of spatial scales in the linear plot, and the ``intermittent''
structure of the process in the log case  ---  sometimes the system can
be kicked quite low, and takes a while to climb out of it.  Recall
that the ODE is quadratically nonlinear, so if $X_t$ ever happens to
become small, it will take a long time to leave a neighborhood of
zero.

In Figure~\ref{fig:Three}c, we plot an aggregate histogram of
$0.5\times 10^6$ datapoints on a log-log scale.  Recall that each
realization of the process was run for $10^4$ jumps; we discarded the
first half of these for each realization, giving $0.5\times 10^3$
points, then aggregated across realizations.  We see from eye that it
looks to have a power law structure for large $x$, and we also plot
the least squares linear fit to the higher half of the data. This
slope is given as $p = -2.4797\pm 0.0112$, which seems to contradict
our assessment from above.  However, it should be pointed out that
trying to match a power law fit by least squares on a log-log plot
gives in general a bad estimator~\cite{Newman.05,
  Clauset.Shalizi.Newman.09}, so we reevaluate our analysis of the
decay.

We use the procedure laid out in~\cite{Clauset.Shalizi.Newman.09} as
follows: given a data set $\{x_i\}$, first decide a cutoff
$x_{\mathsf{min}}$, let $n = \#\{x_i > x_{\mathsf{min}}\}$, and this
gives the estimator
\begin{equation*}
  \hat{p} = 1 + \frac{n}{\sum_{i=1}^n \log(x_i/x_{\mathsf{min}})\1(x_i>x_{\mathsf{min}})}.
\end{equation*}
In Figure~\ref{fig:Three}d, we plot $\hat{p}$ for all cutoffs in the
range $[1,100]$, and we see that as long as we use a cutoff of about
10 or more, this is consistent with the theoretical prediction.  [The
horizontal lines in Figure~\ref{fig:Three}(d) are the bounds given by
the analysis.] We plot by a red star the value that is given by the
``best'' estimator, determined in the following manner: for each
choice of $x_{\mathsf{min}}$, we compare the empirical data with the
theoretical distribution assuming that our estimate of $\hat{p}$, then
measure the closeness of these distributions using the
Kolmogorov-Smirnoff (KS) distance.  The best fit was given by a cutoff
of $42$, with a estimator of $\hat{p} = 3.523$, and a KS distance
between the theoretical and empirical distributions of $7.78\times
10^{-3}$.

\section{Finite-time blowups}\label{sec:blowup}

In this section, we will again return to the one-state case considered
in Section~\ref{sec:mc1}.  It was shown there that as long as
$\dl\ge\df$, the stochastic process is ``well-behaved'', i.e. as
$t\to\infty$, all of the moments are finite.  In particular, it is not
possible for the process to escape to infinity in finite time.

However, in general, if the driving ODE is nonlinear, then it is
certainly possible that the process escapes to infinity in finite time
(these events are usually called {\bf finite-time blowups} in the
dynamical systems literature, or {\bf explosions} in the stochastic
process literature).  We examine these behaviors in this section.

\begin{definition}
  Let us denote by $T_n$ the times where the SHS has jumps.  We say
  that a realization of an SHS has a {\bf finite-time blowup} if either
  of two conditions hold:
\begin{enumerate}
\item[I.] $T_\infty:=\lim_{n\to\infty} T_n < \infty,$ 
\item[II.] the solution of the continuous part becomes infinite between any two jump times.
\end{enumerate}
We will refer to the two types of blowup as Type I and Type II.
\end{definition}

\begin{remark}
  What we are calling a ``Type I'' blowup is the type of behavior that
  is typically called an explosion for stochastic processes.  These
  are common for countable Markov chains where the jump rates can grow
  sufficiently fast.  What we call a ``Type II'' blowup is very much
  like what is called a finite-time blowup in the differential
  equations literature  ---  in this context, since the jump never
  occurs, it is the same as an ODE going to infinity.
\end{remark}

The results of this section can be summarized as follows: let $X_t =
\SHS(f,\lambda,\gamma)$ with $\df > 1$; then, we have the following two cases:

\begin{enumerate}
  
\item[C1.] If {$\dl < \df - 1$}, then $X_t$ will have a Type II blowup with
  probability one for any initial condition;
    
\item[C2.] if {$\dl = \df-1$}, then, depending on the leading coefficients
  of $f$ and $\lambda$, the system exhibits various behaviors,
  summarized in Theorem~\ref{thm:type1} below.  In particular, in some
  parameter regimes it exhibits Type I blowups almost surely, and the
  remaining regimes it does not exhibit Type I blowups almost
  surely. If $\df=1$, then the system cannot blowup, since linear
  flows are well-defined for all time.  However, as we show below, in
  the parameter range where the nonlinear systems exhibit finite-time
  blowups, the linear system still go to infinity but take an infinite
  amount of time to do so, exhibiting in some sense an ``infinite-time
  blowup''.

\end{enumerate}

We consider these two cases in the subsections below.

\subsection{Case C1: $\dl < \df - 1$}

We first consider the case where $\dl < \df-1$.  We show here that any
such system has a Type II blowup --- basically, the ODE goes to
infinity too fast for the jump rates to catch up.

\begin{theorem}\label{thm:type2}
  Let $X_t= \SHS(f,\lambda,\psi)$, and assume that $\dl < \df - 1$.
  Then $X_t$ has a Type II blowup with probability one for any initial
  condition.
\end{theorem}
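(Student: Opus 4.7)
The plan is to show that from any starting state $x_0 > 0$, the conditional probability of a Type II blowup occurring in the \emph{next} excursion is bounded below by a single positive constant $p_*$ independent of the state; the strong Markov property at each jump time then gives $\P(\text{no blowup in first } n \text{ excursions}) \le (1-p_*)^n \to 0$. The probability of blowup in one excursion is explicit: using the definition~\eqref{eq:defofT} of the first jump time $T_1$ together with the substitution $u=\varphi^s(x_0)$, $du=f(u)\,ds$,
\begin{equation*}
  p_{\mathrm{blow}}(x_0) \;=\; \P(T_1 > T^*(x_0)) \;=\; \exp\!\left(-\int_{x_0}^{\infty} \frac{\lambda(u)}{f(u)}\,du\right),
\end{equation*}
where $T^*(x_0) = \int_{x_0}^{\infty} du/f(u) < \infty$ is the finite ODE explosion time, which exists because $f$ has degree $\df \ge 2$ with positive leading coefficient.

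The crucial step is to verify $I(0):=\int_0^{\infty} \lambda(u)/f(u)\,du<\infty$ under the hypothesis $\dl<\df-1$. At infinity, $\lambda(u)/f(u)\sim(\beta_{\dl}/\alpha_{\df})\,u^{\dl-\df}$, which is integrable precisely because $\dl-\df<-1$; this is the one place the degree hypothesis enters. At the origin, $f(u)\sim \alpha_1 u$ with $\alpha_1 = f'(0)>0$ while $\lambda(u)$ vanishes at least to first order (since $\lambda$ is a polynomial with $\lambda(0)=0$), so $\lambda/f$ is bounded as $u\to 0^+$ and contributes no divergence. Since $x_0 \mapsto I(x_0):=\int_{x_0}^\infty \lambda/f$ is nonincreasing, this yields $p_{\mathrm{blow}}(x_0)\ge e^{-I(0)}=: p_*>0$ uniformly in $x_0>0$.

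Finally, applying the strong Markov property at each jump time $T_n$ with post-jump state $X_{T_n+}>0$, the conditional probability of a Type II blowup during the subsequent excursion is at least $p_*$; induction then gives $\P(\text{no Type II blowup in first } n \text{ excursions})\le(1-p_*)^n\to 0$, so Type II blowup occurs with probability one. The main obstacle is the two-sided integrability check in the middle step: the behavior at the origin is automatic from $f'(0)>0$ and $\lambda(0)=0$, while integrability at infinity is exactly what the degree hypothesis $\dl<\df-1$ encodes. Two minor caveats worth noting are that one tacitly assumes $f(u)>0$ on $(0,\infty)$ so that $\varphi^t$ genuinely ascends to $\infty$, and that $\gamma>0$ is needed for the iteration, since $\gamma=0$ absorbs the process at the origin after the first jump and gives only one chance at a Type II blowup.
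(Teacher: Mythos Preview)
Your proof is correct and follows the same route as the paper: show that the cumulative rate integral up to the ODE's explosion time is finite, so blowup before the first jump has positive probability, then use the Markov property to upgrade to probability one. Two points where your execution is arguably cleaner than the paper's: the substitution $u=\varphi^s(x_0)$ turns the time integral into $\int_{x_0}^\infty \lambda(u)/f(u)\,du$, making the role of the degree hypothesis $\dl-\df<-1$ immediate (the paper instead works with the asymptotic $\varphi^t\sim C(t-t^*)^{1/(1-\df)}$ near the singularity); and your uniform bound $I(x_0)\le I(0^+)<\infty$ yields a state-independent $p_*$, so the probability-one conclusion follows from an explicit geometric bound at the jump times rather than the paper's terser appeal to irreducibility and aperiodicity. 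The two caveats you flag (positivity of $f$ on $(0,\infty)$ and $\gamma>0$) are indeed tacit assumptions in the paper's argument as well.
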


\begin{proof}
  Using standard ODE arguments, if
  \begin{equation*}
    \frac{d}{dt}\varphi^t(x) = f(\varphi^t(x)),
  \end{equation*}
  where $f(x)$ is a positive polynomial of degree $\df>1$, then
  $\varphi^t(x)$ has a finite-time singularity at some $t^*<\infty$,
  and, moreover, in a (left) neighborhood of this point, we have
  \begin{equation*}
    \varphi^t(x) \sim C(t-t^*)^{1/(1-\df)}.
  \end{equation*}
  If $\dl<\df-1$, then $0> \dl/(1-\df)>-1$. Writing $\lambda(x) =
  \beta x^\dl + O(x^{\dl-1})$, we have that $\lambda(\varphi^s(x))$ is
  integrable near this singular point, i.e.
  \begin{equation*}
    \int_{t_0}^{t^*} \lambda(\varphi^s(x))\,ds  < \infty.
  \end{equation*}
  Since any exponential has support on the whole real axis, this
  and~\eqref{eq:defofT} means that starting at any initial condition,
  the probability of the ODE having a singularity before jumps is
  positive, i.e.
  \begin{equation*}
    \P\left(S_n  >  \int_{t_0}^{t^*} \beta(\varphi^s(x))\,ds\right) > 0,
  \end{equation*}
  and the system has a Type II blowup with positive probability.
  Since the process is Markov, irreducible on $\R^+$, and aperiodic,
  it follows that the system has a Type II blowup with probability
  one. \end{proof}

\subsection{Case~C2: $\dl = \df - 1$}

This case is much more complicated than the previous one for various
reasons, not the least of which being that the behavior depends on the
coefficients of the functions $f$ and $\lambda$.  The behavior is summarized in the following theorem:

\begin{theorem}\label{thm:type1}
Let $X_t = \SHS(f,\lambda,\gamma)$, where
\begin{equation}\label{eq:flasymp}
  f(x) = \alpha x^{k+1} + O(x^k),\quad \lambda(x) = \beta x^k + O(x^{k-1}),
\end{equation}
$k\ge1$ (so that the ODE is superlinear), and all of the coefficients
of both polynomials are positive. Then:

\begin{enumerate}

\item if $\gamma > e^{-\alpha/\beta}$, then $X_t$ has Type I blowups almost surely;
\item if $1-\alpha/\beta < \gamma < e^{-\alpha/\beta}$, then $X_t\to0$
  in probability, even though $\E[X_t]\to\infty$ as $t\to\infty$;
\item if $\gamma < 1-\alpha/\beta$, then $X_t\to0$, both in $L^1$ and almost surely.
\end{enumerate}
In particular, $X_t$ will not have Type II blowups.  

Finally, if we assume a linear ODE, i.e. $X_t = \SHS(\alpha x, \beta,
\gamma)$, then the conclusions above are all true except for \#1, in
which case we have that $X_t\to\infty$ a.s., but $X_t<\infty$ w.p.~1
for any finite $t$.

\end{theorem}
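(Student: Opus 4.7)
The plan is to reduce the continuous-time process to the embedded Markov chain $Y_n := X_{T_n}$ at jump times, which will turn out to be a multiplicative random walk. For the leading-order system $f(x)=\alpha x^{k+1}$, $\lambda(x)=\beta x^k$ I would integrate the ODE explicitly, $x(t)=x_0(1-k\alpha x_0^k t)^{-1/k}$, and compute the survival function of the first jump as $\P(T_1>t\mid Y_0=x_0)=(1-k\alpha x_0^k t)^{\beta/(k\alpha)}$. Changing variables to $R:=X_{T_1-}/x_0$, the $x_0$-dependence cancels by scaling and gives the universal Pareto tail $\P(R>r)=r^{-\beta/\alpha}$ for $r\ge 1$. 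The linear-ODE case gives the same distribution through $R=e^{\alpha U}$ with $U\sim \mathrm{Exp}(\beta)$. Hence $Y_n=\gamma R_n Y_{n-1}$ with $R_n$ i.i.d.\ Pareto, independent of $Y_{n-1}$.

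Taking logarithms reduces everything to a classical random walk: $\log Y_n = \log Y_0 + n\log\gamma + S_n$ with $S_n = \sum_{i=1}^n \log R_i$ a sum of i.i.d.\ $\mathrm{Exp}(\beta/\alpha)$ variables. The SLLN gives drift $\log\gamma + \alpha/\beta$, so $Y_n\to\infty$ a.s.\ iff $\gamma>e^{-\alpha/\beta}$, and $Y_n\to 0$ a.s.\ iff $\gamma<e^{-\alpha/\beta}$. On the other hand, $\E[Y_n]=Y_0(\gamma/(1-\alpha/\beta))^n$ when $\beta>\alpha$ (and $=\infty$ when $\beta\le\alpha$), and this grows iff $\gamma>1-\alpha/\beta$. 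Since $1-x<e^{-x}$ for $x>0$, the three regimes of the theorem match the drift/mean thresholds exactly, and Case~2 realizes the classical heavy-tail dichotomy $Y_n\to 0$ a.s.\ while $\E[Y_n]\to\infty$, driven by the fat Pareto tail.

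To translate back to $X_t$, I would exploit the inter-jump time $U_n=(1-R_n^{-k})/(k\alpha Y_{n-1}^k)\le 1/(k\alpha Y_{n-1}^k)$. In Case~1 the exponential growth of $Y_n$ makes $\sum U_n$ geometrically summable, so $T_\infty<\infty$ a.s.\ and we obtain Type~I blowup; Type~II is excluded because the integrated rate $-(\beta/(k\alpha))\log(1-k\alpha x_0^k t)$ diverges at the ODE singularity, forcing a jump to preempt it w.p.~1. In Cases~2 and~3, when $Y_n$ is small the ODE needs time $\sim 1/Y_n^k$ to leave a neighborhood of $Y_n$, so the occupation measure near $t$ concentrates on the small values of $Y_n$, giving $X_t\to 0$ in probability; rare Pareto excursions consume negligible time but can dominate $\E[X_t]$ in Case~2. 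Case~3's a.s.\ and $L^1$ convergence then follow from $\E[Y_n]\to 0$ together with a Borel--Cantelli control on exceedance events. For the linear-ODE variant, $T_n$ is a sum of $\mathrm{Exp}(\beta)$ variables, so $T_n\to\infty$ a.s., ruling out any finite-time blowup even when $\log\gamma+\alpha/\beta>0$.

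The main expected obstacle is the lower-order $O(x^k)$ and $O(x^{k-1})$ corrections in~\eqref{eq:flasymp}, which break the exact Pareto identity. I would handle these by a monotone sandwich: for any $\epsilon>0$ choose $x^\star$ so that $\alpha(1-\epsilon)x^{k+1}\le f(x)\le \alpha(1+\epsilon)x^{k+1}$ and $\beta(1-\epsilon)x^k\le \lambda(x)\le \beta(1+\epsilon)x^k$ on $\{x\ge x^\star\}$, then couple the true process between two leading-order models with parameters $(\alpha(1\pm\epsilon),\beta(1\mp\epsilon))$; sending $\epsilon\to 0$ preserves each of the strict inequalities in the theorem's hypotheses. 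The secondary subtlety is behavior on $\{X_t<x^\star\}$, relevant only in Cases~2 and~3, where the embedded chain is irreducible on $\R^+$ and (by standard positive-recurrence arguments for compact sets) spends only bounded expected time there, so the small-$x$ contributions are lower-order corrections that do not affect the asymptotic conclusions.
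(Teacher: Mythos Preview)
Your proposal is correct and follows essentially the same route as the paper: reduce to the embedded chain at jump times, derive the multiplicative recurrence $X_{T_n}=\gamma e^{(\alpha/\beta)S_n}X_{T_{n-1}}$ for the pure-monomial case (your Pareto formulation $\P(R>r)=r^{-\beta/\alpha}$ is just a reparametrization of the paper's $R=e^{(\alpha/\beta)S}$ with $S\sim\mathrm{Exp}(1)$), analyze drift versus mean to separate the three regimes, bound inter-jump times to show $T_\infty<\infty$ in Case~1, and then pass to general polynomials via a monotone comparison. The only cosmetic differences are that the paper invokes Chernoff bounds where you use the SLLN, and handles the lower-order terms by choosing a single comparison parameter $\beta'>\beta$ (respectively $\alpha'>\alpha$) rather than an $\epsilon$-sandwich --- exploiting the positivity of all coefficients to get the global inequality $f(x)\ge \alpha x^{k+1}$ directly, which spares you the small-$x$ analysis you flagged as a secondary subtlety.
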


The way we proceed to prove this is as follows.  We first compute a
recurrence relation when $f,\lambda$ are assumed to be pure monomials,
and then show that this recurrence relation has growth properties that
correspond to the three types of convergence above, in the appropriate
parameter regimes.  Finally, we prove a monotonicity theorem that
allows us to extend the calculation for monomials to general
polynomials.

\begin{proposition}\label{prop:nottype2}
  If $X_t = \SHS(f,\lambda,\gamma)$ with $\dl = \df - 1$, then the
  probability of a Type II blowup is zero for any initial condition.
\end{proposition}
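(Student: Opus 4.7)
The plan is to show that, under the critical-balance hypothesis $\dl = \df-1$, the cumulative jump hazard $\int \lambda(\varphi^s(x_0))\,ds$ diverges as the upper limit approaches the ODE's blowup time $t^*$. Since the competing exponential clock $S$ is almost surely finite, this forces the next jump to fire strictly before $t^*$ with probability one, ruling out a Type II blowup on the first inter-jump interval; the strong Markov property at each successive jump then extends the conclusion to every inter-jump interval.

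First, I would describe the singularity of the flow. Write $f(x) = \alpha x^{k+1} + O(x^k)$ with $\alpha>0$ and $k = \df - 1 \ge 1$. Separation of variables in $\dot\varphi = f(\varphi)$ gives $\int_{x_0}^{\varphi^t(x_0)} dy/f(y) = t$; the integral converges as the upper limit tends to $\infty$, so $\varphi^t(x_0)$ reaches $+\infty$ at a finite time $t^* = t^*(x_0) < \infty$. A standard asymptotic analysis of the same integral yields
\begin{equation*}
  \varphi^t(x_0) \sim \bigl(\alpha k (t^*-t)\bigr)^{-1/k} \quad \text{as } t \nearrow t^*.
\end{equation*}

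Second, I would substitute this into $\lambda$. Since $\lambda(x) = \beta x^k + O(x^{k-1})$ with $\beta > 0$ and $\dl = k$, the leading behavior is the borderline non-integrable singularity
\begin{equation*}
  \lambda(\varphi^s(x_0)) \sim \frac{\beta}{\alpha k}\cdot\frac{1}{t^*-s} \quad \text{as } s \nearrow t^*,
\end{equation*}
so $\int_{t_0}^{t^*} \lambda(\varphi^s(x_0))\,ds = +\infty$. By \eqref{eq:defofT}, the next jump time $T$ is determined by $\int_{t_0}^{T}\lambda(\varphi^s(x_0))\,ds = S$, where $S$ is a unit exponential. Since $S < \infty$ almost surely and the left-hand side is continuous, strictly increasing in its upper limit, and diverges to $+\infty$ as the limit tends to $t^*$, there is a unique $T < t^*$ solving this equation almost surely. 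Thus no blowup occurs on $[t_0, T]$. After the jump the continuous state is reset to $\gamma X_{T-} \in \R^+$, which is again finite almost surely, and the strong Markov property lets us repeat the argument on the next inter-jump interval.

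The main obstacle is simply the critical-exponent bookkeeping in Step 2: the hypothesis $\dl = \df - 1$ is precisely what makes the divergence $(t^*-s)^{-1}$ logarithmic, the marginal non-integrable case. Had $\dl$ been any smaller, the integral would be finite and a Type II blowup could occur (this is Theorem \ref{thm:type2}); had $\dl$ been larger, the divergence would be even stronger. Once this sharp balance is observed, the remainder of the proof is a clean application of the definition of $T$ together with the finiteness of a unit exponential.
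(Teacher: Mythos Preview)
Your proof is correct and follows essentially the same approach as the paper's: both show that the borderline exponent $\dl=\df-1$ makes $\lambda(\varphi^s(x))\sim C/(t^*-s)$ near the ODE blowup time $t^*$, so that $\int^{t^*}\lambda(\varphi^s(x))\,ds=\infty$ and hence the exponential clock must fire before $t^*$. The paper's version is terser---it simply notes the $1/t$ singularity and divergence of the integral, citing the asymptotics already derived in the proof of Theorem~\ref{thm:type2}---whereas you spell out the constants, the monotonicity argument for $T<t^*$, and the strong Markov extension to later inter-jump intervals, but the mathematical content is the same.
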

\begin{proof}
  Since $\dl = \df-1$, then $\lambda(\varphi^t(x))\sim 1/t$ in a
  neighborhood of the singularity (see the beginning of the proof of
  Proposition~\ref{thm:type2} for more detail), so that
  \begin{equation*}
    \int_{t_0}^{t^*} \lambda(\varphi^s(x))\,ds  = \infty,
  \end{equation*}
  and  a Type II blowup is not possible.
\end{proof}

\begin{lemma}\label{lem:recurrence}
  Let $X_t= \SHS(f,\lambda,\gamma)$ with $f(x) = \alpha x^{k+1}$ and
  $\lambda(x) = \beta x^k$.  Then
  \begin{equation*}
    X_{T_n} = \gamma e^{(\alpha/\beta) S_n}X_{T_{n-1}},
  \end{equation*}
  where $S_n$ are iid unit-rate exponential random variables.
\end{lemma}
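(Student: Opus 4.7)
The plan is to start from the defining integral equation~\eqref{eq:defofTn} for the jump times. In our one-state setting ($|\mc Q|=1$) this reads
\begin{equation*}
  \int_{T_{n-1}}^{T_n} \lambda(\varphi^{s-T_{n-1}}(X_{T_{n-1}}))\,ds = S_n,
\end{equation*}
with $S_n$ iid unit-rate exponentials (independent of the past), as furnished directly by Definition~\ref{shs_def}. Setting $u = s - T_{n-1}$ and $\tau_n := T_n - T_{n-1}$, the problem reduces to inverting $\tau \mapsto \int_0^\tau \lambda(\varphi^u(X_{T_{n-1}}))\,du$ at the value $S_n$.

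The key step is a change of variables from time to the state variable. Since $\dot\varphi^u = f(\varphi^u)$ and $f>0$ on $\R^+$, substituting $x = \varphi^u(X_{T_{n-1}})$ gives $dx = f(x)\,du$ and hence
\begin{equation*}
  \int_0^{\tau_n}\lambda(\varphi^u(X_{T_{n-1}}))\,du = \int_{X_{T_{n-1}}}^{X_{T_n^-}} \frac{\lambda(x)}{f(x)}\,dx,
\end{equation*}
where $X_{T_n^-} := \varphi^{\tau_n}(X_{T_{n-1}})$ is the pre-jump value of the continuous state. In the monomial case this ratio is remarkably simple, independent of the degree $k$:
\begin{equation*}
  \frac{\lambda(x)}{f(x)} = \frac{\beta x^k}{\alpha x^{k+1}} = \frac{\beta}{\alpha x}.
\end{equation*}
Integrating then gives $S_n = (\beta/\alpha)\ln(X_{T_n^-}/X_{T_{n-1}})$, so $X_{T_n^-} = e^{(\alpha/\beta) S_n}X_{T_{n-1}}$, and applying the reset map yields $X_{T_n} = \gamma X_{T_n^-} = \gamma e^{(\alpha/\beta) S_n}X_{T_{n-1}}$.

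Honestly, there is no real obstacle: the cancellation $\lambda/f = \beta/(\alpha x)$ is what makes the computation collapse, and the remaining steps are a routine change of variables. The one point worth a line of justification is that the change of variables is legitimate because $\tau_n$ is almost surely finite and the trajectory does not escape before the jump; this is precisely the content of Proposition~\ref{prop:nottype2}, which applies here since $\dl = k = \df - 1$ and whose proof does not rely on this lemma. The iid structure of the $S_n$ is built into Definition~\ref{shs_def}, so no further probabilistic argument is needed to conclude.
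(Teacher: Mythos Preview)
Your proof is correct and, if anything, cleaner than the paper's own argument for this lemma. The paper proceeds by first writing down the explicit flow $\varphi^t(x) = x(1-\alpha k t x^k)^{-1/k}$, then computing $\lambda(\varphi^t(x))$ and integrating in $t$ to obtain~\eqref{eq:tn}, and finally substituting back into the flow formula to extract the recurrence. Your route sidesteps the explicit ODE solution entirely: the change of variables $x=\varphi^u$, $dx = f(x)\,du$ converts the time integral directly into $\int \lambda/f\,dx = (\beta/\alpha)\int dx/x$, from which the recurrence drops out in one line. This is precisely the technique the paper itself deploys later in Section~\ref{sec:exact} for the general relation, so you have in effect anticipated that computation.

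One small trade-off worth noting: the paper's explicit route yields, as a by-product, the formula~\eqref{eq:tn} for the inter-jump time $T_{n+1}-T_n$ in terms of $S_n$ and $X_{T_n}$. That relation is used downstream in the proof of Theorem~\ref{thm:type1} to show that the inter-jump times are summable when $X_{T_n}$ grows exponentially (and hence that $T_\infty<\infty$, giving the Type~I blowup). Your argument does not produce this formula directly, so if you were rewriting Section~\ref{sec:blowup} end-to-end you would still need to solve the ODE at that point --- but for the lemma as stated, your proof is complete and self-contained.
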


\begin{proof}
  It is not hard to check that
  \begin{equation*}
    \varphi^t(x) = \frac{x}{(1-\alpha k t x^k)^{1/k}},
  \end{equation*}
  and thus
  \begin{equation*}
    \lambda(\varphi^t(x))  =\frac{\beta x^k}{1-\alpha k t x^k}.
  \end{equation*}
Therefore, we have
  \begin{align*}
    S_n 
    &= \int_0^{T_{n+1}-T_n} \lambda(\varphi^s(X_{T_n}))\,ds= \int_0^{T_{n+1}-T_n} \frac{\beta (X_{T_n})^k}{1-\alpha k s (X_{T_n})^k}\,ds\\
    &= \frac{-\beta}{\alpha k} \ln\av{1-\alpha k s (X_{T_n})^k}\Bigg|_{s=0}^{s=T_{n+1}-T_n} = \frac{-\beta}{\alpha k} \ln\av{1-\alpha k (T_{n+1}-T_n) (X_{T_n})^k}.
  \end{align*}
  From this, one can see that 
  \begin{equation}\label{eq:tn}
    1-\alpha k (T_{n+1}-T_n) (X_{T_n})^k = e^{-\alpha/(\beta k) S_n}.
  \end{equation}
  We then compute
\begin{equation}\label{eq:eab}
\begin{split}
    X_{T_{n+1}}
    &= \gamma \varphi^{T_{n+1}-T_n}(X_{T_n})\\
    &= \gamma \frac{ X_{T_n}}{(1-\alpha k (T_{n+1}-T_n) (X_{T_n})^k)^{1/k}} = \gamma e^{\frac{\alpha}{\beta}S_n}X_{T_n}.  
\end{split}
\end{equation}
\end{proof}

\begin{lemma}\label{lem:Zn}
  Let $S_k$ be iid unit rate exponentials and define
  \begin{equation*}
    Z_n = \prod_{k=1}^n \gamma e^{(\alpha/\beta)S_k}.
  \end{equation*}
  Then:
  \begin{enumerate}
  \item if $\gamma > e^{-\alpha/\beta}$, then there exists a
    $\delta'>0$ such that $\liminf_n e^{-\delta' n}Z_n = \infty$ with
    probability one;
  \item if $1-\alpha/\beta < \gamma < e^{-\alpha/\beta}$, then $Z_n\to0$
    in probability, even though $\E[Z_n]\to\infty$ as $t\to\infty$;
  \item if $\gamma < 1-\alpha/\beta$, then $Z_n\to0$, both in $L^1$ and almost surely.
  \end{enumerate}
\end{lemma}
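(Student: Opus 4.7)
The plan is to reduce everything to a law-of-large-numbers statement by taking logarithms. Writing $T_n = \sum_{k=1}^n S_k$, we have
\begin{equation*}
  \log Z_n = n\log\gamma + (\alpha/\beta) T_n,
\end{equation*}
so
\begin{equation*}
  \frac{1}{n}\log Z_n = \log\gamma + \frac{\alpha}{\beta}\cdot\frac{T_n}{n}.
\end{equation*}
By the strong law of large numbers, $T_n/n \to 1$ almost surely (since $\E[S_k]=1$), so $n^{-1}\log Z_n \to c := \log\gamma + \alpha/\beta$ almost surely. This single asymptotic drives all three cases.

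For case 1, $\gamma > e^{-\alpha/\beta}$ is exactly the condition $c > 0$. Pick any $\delta' \in (0,c)$; then almost surely $\log Z_n - \delta' n = n(c-\delta') + o(n) \to +\infty$, so $e^{-\delta' n}Z_n \to \infty$ almost surely, which certainly gives the required $\liminf$. For cases 2 and 3, $\gamma < e^{-\alpha/\beta}$ means $c < 0$, so $\log Z_n \to -\infty$ almost surely, which immediately yields $Z_n \to 0$ almost surely (and therefore in probability), which is part of what is claimed in both cases.

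To separate cases 2 and 3, the natural thing is to compute $\E[Z_n]$ directly using the independence of the $S_k$ and the exponential moment generating function: when $\alpha/\beta < 1$,
\begin{equation*}
  \E\bigl[e^{(\alpha/\beta)S_k}\bigr] = \int_0^\infty e^{(\alpha/\beta - 1)s}\,ds = \frac{1}{1-\alpha/\beta},
\end{equation*}
so that
\begin{equation*}
  \E[Z_n] = \left(\frac{\gamma}{1-\alpha/\beta}\right)^{\!n};
\end{equation*}
when $\alpha/\beta \ge 1$, $\E[Z_n] = +\infty$ for $n\ge 1$. In case 3, $\gamma < 1 - \alpha/\beta$ forces $\alpha/\beta < 1$ and the common ratio above is less than $1$, so $\E[Z_n]\to 0$. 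Combined with $Z_n\ge 0$ this is $L^1$ convergence to zero, and the almost-sure convergence is already in hand from the SLLN step. In case 2, $\gamma > 1-\alpha/\beta$ either makes the ratio above greater than $1$ (if $\alpha/\beta < 1$) or puts us in the regime $\E[Z_n]=\infty$ (if $\alpha/\beta \ge 1$); either way $\E[Z_n]\to\infty$, while the SLLN step still delivers $Z_n\to 0$ a.s., hence in probability.

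No step here is a serious obstacle; the only mild care is in case 2, where one must observe that $Z_n\to 0$ almost surely is compatible with $\E[Z_n]\to\infty$ precisely because the mass of $Z_n$ is carried off to infinity by a vanishingly small set of sample paths — which is exactly what the hypothesis $\gamma > 1 - \alpha/\beta$ quantifies through Jensen's inequality (note $e^{-\alpha/\beta} > 1 - \alpha/\beta$ for $\alpha/\beta \in (0,1)$, guaranteeing the window defining case 2 is nonempty). One should also verify that the boundary values $\gamma = 1-\alpha/\beta$ and $\gamma = e^{-\alpha/\beta}$ are correctly excluded; these degenerate into critical cases where $\E[Z_n]$ is constant in $n$ or $c=0$, respectively, and are not part of the statement.
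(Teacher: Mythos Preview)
Your proof is correct and follows the same outline as the paper's: write $W_k = \gamma e^{(\alpha/\beta)S_k}$, identify the two governing parameters $\delta = \E[\log W_k] = \log\gamma + \alpha/\beta$ and $\mu = \E[W_k] = \gamma/(1-\alpha/\beta)$, compute $\E[Z_n] = \mu^n$ to settle the $L^1$ behavior, and analyze $\log Z_n$ as an i.i.d.\ sum to settle the almost-sure behavior. The one substantive difference is the tool used for the almost-sure claims: the paper invokes Chernoff-type concentration bounds (and, implicitly, Borel--Cantelli) to control $\P(Z_n < e^{n\delta/2})$ and $\P(Z_n > e^{n\delta/2})$, whereas you appeal directly to the strong law of large numbers for $n^{-1}\log Z_n$. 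Your route is more elementary and arguably cleaner, and in fact yields the slightly stronger conclusions $e^{-\delta' n}Z_n\to\infty$ a.s.\ in case~1 and $Z_n\to 0$ a.s.\ in case~2; the paper's route gives explicit exponential tail bounds, but those quantitative rates are not needed for the lemma as stated or for its downstream use in Theorem~\ref{thm:type1}. You also handle the sub-case $\alpha/\beta \ge 1$ (where $\mu = +\infty$) explicitly, which the paper's computation of $\mu$ glosses over.
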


\begin{proof}
  Let us write $W_k = \gamma e^{(\alpha/\beta)S_k}$.  Let us write $\mu
  = \E[W_k]$ and $\delta = \E[\log(W_k)]$.  We compute:
\begin{equation*}
  \mu = \gamma \E[e^{(\alpha/\beta)S_k}] = \gamma \int_0^t e^{(\alpha/\beta)t} e^{-t}\,dt = \frac{\gamma}{1-\alpha/\beta}.
\end{equation*}
We also have
\begin{equation*}
  \delta = \E\left[\frac\alpha\beta S_k + \log\gamma\right] = \frac\alpha\beta + \log\gamma.
\end{equation*}
Note then that the three conditions in this lemma correspond to
$\delta > 0$; $\delta < 0$ and $\mu >1$; and $\mu < 1$, respectively.
(By Jensen's inequality, we have
\begin{equation*}
  \mu = \E[W_k] = \E[e^{\log(W_k)}] > e^{\E[\log(W_k)]} = e^\delta,
\end{equation*}
so clearly $\delta> 0$ implies $\mu > 1$, and these are the only three
possibilities.)

First, we compute
\begin{equation*}
  \E[Z_n] = \E\left[\prod_{k=1}^n W_k\right] =  \prod_{k=1}^n \E[W_k] = \mu^n,
\end{equation*}
so clearly the expectation goes to zero (resp.~$\infty$) if $\mu<1$
(resp.~$\mu>1$).  Since $Z_n\ge 0$ by definition, this implies that
$Z_n\to0$ both in $L^1$ and almost surely.  This establishes claim \#3
of the lemma.

Next, note that
\begin{equation*}
  Z_n = \exp(\log(Z_n)) = \exp\left(\sum_{k=1}^n \log W_k\right) = \exp\left(\frac\alpha\beta \sum_{k=1}^n S_k + n \log\gamma\right).
\end{equation*}
From the Law of Large Numbers, 
\begin{equation*}
  \sum_{k=1}^n \log W_k = n \E[\log W_k] \pm O(\sqrt{n}) = n\left(\frac\alpha\beta + \log\gamma\right)\pm O(\sqrt{n}),
\end{equation*}
and if $\delta\neq 0$, this sum goes to $\pm\infty$ depending on the
sign of $\delta$.  More specifically, we can use Chernoff-type
bounds~\cite{SW,DZ}: let $\delta > 0$.  Since 
\begin{equation*}
  \E\left[\sum_{k=1}^n \log W_k\right] = n\delta,
\end{equation*}
the Chernoff bounds give
\begin{equation*}
  \P\left(\sum_{k=1}^n \log W_k < n\delta/2\right) < e^{-n\delta/8},
\end{equation*}
or
\begin{equation}\label{eq:ub}
  \P\left(Z_n < e^{n\delta/2}\right) < e^{-n\delta/8}.
\end{equation}
In particular, this means that $Z_n$ grows faster than an exponential
with probability exponentially close to one.  For example, choose $0 <
\delta' < \delta/2$.  Now, if it is not true that $\liminf e^{-\delta'
  n}Z_n = \infty$, this means that there exists $M>0$ and an infinite
sequence $n_1,n_2,\dots, n_k, \dots$ such that $e^{-\delta'
  {n_k}}Z_{n_k}\le M$.  From~\eqref{eq:ub}, this event has probability
zero.  This proves claim \#1 of the lemma.

Conversely, if $\delta < 0$, then
\begin{equation*}
  \P\left(Z_n > e^{n\delta/2}\right) < e^{-n\delta/12},
\end{equation*}
or the infinite product goes to zero exponentially fast with
probability exponentially close to one, which implies that $Z_n\to 0$
in probability.  Thus, if $\delta<0$ and $\mu>1$, then we have that
$Z_n\to0$ in probability but $\E[Z_n]\to\infty$, proving claim \#2 of
the theorem.
\end{proof}

\begin{lemma}\label{lem:monotone}
  Let $X_t = \SHS(f,\lambda,\gamma)$ and $Y_t = \SHS( g, \mu,
  \gamma)$, and assume that  
  \begin{equation}\label{eq:monotone}
    X_0 \le Y_0, \quad g(x)\ge f(x),\quad \mu(x) \le \lambda(x).
  \end{equation}
  If $X_t(\omega)$ has a finite-time blowup, then so does
  $Y_t(\omega)$.  Thus, the probability of $Y_t$ having a finite-time
  blowup is at least as large as $X_t$ having one, and so, for
  example, if $X_t$ has an a.s.  finite-time blowup, then so does
  $Y_t$.

  Conversely, if there exists an $M>0$ such that for all $x \le
  M$,~\eqref{eq:monotone} holds, and $X_0 \le Y_0 \le M$, if
  $Y_t(\omega)\to0$ as $t\to\infty$, then so does $X_t(\omega)$, and
  thus the probability of $X_t$ decaying to zero is at least as large
  as the probability that $Y_t$ does.
\end{lemma}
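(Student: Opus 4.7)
The plan is to couple $X_t$ and $Y_t$ on a common probability space by driving both processes with the same stream of iid unit-rate exponentials $\{S_n\}_{n\ge 1}$, as permitted by the Remark following Definition~\ref{shs_def}. The technical workhorse is the change of variable $ds = dr/f(r)$ (and analogously for $g$), which rewrites the defining equation $\int_{T_{n-1}^X}^{T_n^X}\lambda(X_s)\,ds = S_n$ as the purely spatial identity $\int_{X_{n-1}}^{A_n}\lambda(r)/f(r)\,dr = S_n$, where $X_n := X_{T_n^X}$ and $A_n := X_n/\gamma$ is the pre-reset value just before the $n$-th jump; the analogous identity holds for $Y_n, B_n$ with $f,\lambda$ replaced by $g,\mu$.

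I would first prove by induction on $n$ that $X_n \le Y_n$. Given $X_{n-1} \le Y_{n-1}$ and assuming neither process has a Type~II blowup before its $n$-th jump, suppose for contradiction $A_n > B_n$. Combining positivity of all integrands with $X_{n-1} \le Y_{n-1}$ and the pointwise inequality $\lambda/f \ge \mu/g$ gives
\begin{equation*}
\int_{X_{n-1}}^{A_n}\tfrac{\lambda}{f}\,dr > \int_{X_{n-1}}^{B_n}\tfrac{\lambda}{f}\,dr \ge \int_{Y_{n-1}}^{B_n}\tfrac{\lambda}{f}\,dr \ge \int_{Y_{n-1}}^{B_n}\tfrac{\mu}{g}\,dr,
\end{equation*}
with the first inequality strict; this contradicts the defining identities, which force both extreme sides to equal $S_n$. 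Hence $A_n \le B_n$, and $X_n = \gamma A_n \le \gamma B_n = Y_n$.

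Letting $A_n, B_n \to \infty$ in the same chain yields the Type~II transfer: $X$ fails to complete its $(n{+}1)$-st jump iff $S_{n+1} > \int_{X_n}^\infty \lambda/f\,dr$, and this forces $S_{n+1} > \int_{Y_n}^\infty \mu/g\,dr$, which is exactly Type~II for $Y$. For Type~I, I would use the crude bound $T_n^Y - T_{n-1}^Y \le t_g^*(Y_{n-1}) \le t_f^*(X_{n-1})$, where $t_h^*(x) := \int_x^\infty dr/h(r)$ is the $h$-ODE blowup time; the second inequality uses $X_{n-1} \le Y_{n-1}$ and $g \ge f$. The hypothesis that $X$ has a Type~I blowup gives $\sum_n (T_n^X - T_{n-1}^X) < \infty$; since in the operative regime ($\dl = \df - 1$) each waiting time is a positive iid fraction of $t_f^*(X_{n-1})$ (explicit from Lemma~\ref{lem:recurrence} for monomials and asymptotically so for polynomials), one can then conclude that $\sum_n t_f^*(X_{n-1})$ is a.s.~finite, whence $T_Y^* \le T_0^Y + \sum_n t_f^*(X_{n-1}) < \infty$.

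The converse half uses the same coupling restricted to the set $\{X_s, Y_s \le M\}$: the inductive argument above gives $X_n \le Y_n$ at coupled jump times, and since $Y_t \to 0$ eventually confines $Y$ (and hence $X_n$) below any prescribed threshold, control of the between-jump excursions of $X$ (short once $X_n$ is small, because $\lambda \ge \mu$ keeps the jump rate up) extends this to $X_t \to 0$. The main obstacle I expect is the almost-sure implication in the Type~I step, namely that $\sum_n (T_n^X - T_{n-1}^X) < \infty$ forces $\sum_n t_f^*(X_{n-1}) < \infty$: for monomials this is immediate from the explicit identity $T_n^X - T_{n-1}^X = t_f^*(X_{n-1})(1 - e^{-k\alpha S_n/\beta})$ coming out of Lemma~\ref{lem:recurrence}, but in the general polynomial case it requires a conditional strong-law or Borel--Cantelli argument to prevent the random factors between the waiting times and their deterministic upper bounds from conspiring to hide a divergent tail.
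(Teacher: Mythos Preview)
Your coupling via the common exponential stream and the induction on the post-jump values $X_n\le Y_n$ is exactly the skeleton of the paper's proof, but your execution is more careful. The paper's argument is a three-sentence sketch: it asserts (from $\mu\le\lambda$ alone) that $T_1^{(Y)}\ge T_1^{(X)}$, then combines this with $g\ge f$ to get $Y_{T_1^{(Y)}}\ge X_{T_1^{(X)}}$, inducts, and declares that ``the conclusions follow.'' Your spatial change of variables $ds = dr/f(r)$ is both cleaner and actually correct where the paper's sketch is not: the intermediate jump-time ordering $T_n^{(Y)}\ge T_n^{(X)}$ is \emph{false} in general (take $f(x)=x$, $g(x)=2x$, $\lambda=\mu=x$, $X_0=Y_0=1$; then $T_1^{(Y)}=\tfrac12\ln(1+2S_1)<\ln(1+S_1)=T_1^{(X)}$ for $S_1$ large), even though the pre-reset inequality $A_n\le B_n$ --- which is what you prove and what is actually needed --- does hold.

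You are also right that the Type~I transfer is where the real content lies, and the paper simply does not address it inside the lemma. Your proposed route through $T_n^Y-T_{n-1}^Y\le t^*_g(Y_{n-1})\le t^*_f(X_{n-1})$ is the right bound, but the step ``$\sum_n(T_n^X-T_{n-1}^X)<\infty\Rightarrow\sum_n t^*_f(X_{n-1})<\infty$'' via a Borel--Cantelli/strong-law argument on the random factors $c_n=1-e^{-k\alpha S_n/\beta}$ is awkward, since $\E[1/c_n]=\infty$. The cleaner fix --- and in fact the one the paper uses when it actually invokes this lemma in the proof of Theorem~\ref{thm:type1} --- is to bypass this implication entirely: in every application, the dominated process $X$ is a monomial system for which Lemma~\ref{lem:Zn} already gives $\liminf_n e^{-\delta' n}X_{T_n}=\infty$ a.s., so $t^*_f(X_{n-1})\le C\,X_{n-1}^{-k}\le C'e^{-\delta' k n}$ is directly summable. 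In other words, the lemma as stated in the paper is really only proved (and only used) in conjunction with the explicit growth rate coming from Lemma~\ref{lem:Zn}; your write-up makes this dependency honest rather than hidden.
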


\begin{proof} 
  We denote $T_n^{(X)}$ as the $n$th reset time for process $X_t$, and
  similarly for $T_n^{(Y)}$.  Since $\mu \le \lambda$, $T_1^{(Y)} \ge
  T_1{(X)}$.  Together with the fact that $g \ge f$, this implies that
  $Y_{T_1^{(Y)}} \ge X_{T_1^{(X)}}$.  Using induction, the random sequence
  $Y_{T_n^{(Y)}}$ dominates the sequence $X_{T_n^{(X)}}$ for any $\omega$,
  and the conclusions follow.
\end{proof}

\begin{remark}
  Said in words: if we make the vector field larger, or the rate
  smaller, then the system is more likely to blow up.  Conversely, if
  we make the vector field smaller, or the rate larger, the system is
  more likely to go to zero.
\end{remark}

{\bf Proof of Theorem~\ref{thm:type1}.}  Let us first consider the
case where $f$ and $\lambda$ are pure monomials, i.e., $f(x) = \alpha
x^{k+1}$, $\lambda(x) = \beta x^k$.  Let us first consider the case
where $\gamma > e^{-\alpha/\beta}$.  Using Lemmas~\ref{lem:recurrence}
and~\ref{lem:Zn}, we know that there exists $\d'>0$ such that
$\liminf_n e^{-\delta' n}X_{T_n} = \infty$ with probability one.  This
means that $X_{T_n}$ is blowing up at least exponentially fast as a
function of $n$.  Moreover, using~\eqref{eq:tn}, we can solve
\begin{equation*}
  T_{n+1}-T_n = \frac{1-e^{-(\alpha/\beta k) S_n}}{\alpha k (X_{T_n})^k} \le C e^{-\delta' kn},
\end{equation*}
and if $k>1$, the telescoping sum is summable and $T_\infty < \infty$.
For the other two parameter regimes, the statement follows directly.

If $k=1$, then the recurrence in~\eqref{eq:tn} still holds, but now we
have $T_{n+1}-T_n = S_n/\beta$, and it is straightforward to see that
the telescoping sum is not summable w.p.~1, so that $T_\infty =
\infty$.

Now, for the general polynomial, we use Lemma~\ref{lem:monotone}.
Consider a general $f$ and $\lambda$, and assume that $\gamma >
e^{-\alpha/\beta}$.  If we choose $\beta'>\beta$ so that $\gamma >
e^{-\alpha/\beta'}$, then the argument above shows that $\SHS(\alpha
x^{k+1},\beta' x^k, \gamma)$ has Type I blowups a.s., and
Lemma~\ref{lem:monotone} implies that $\SHS(f,\lambda,\gamma)$ does as
well.  Similarly, if $\gamma < e^{-\alpha/\beta}$, choose
$\alpha'>\alpha$ with $\gamma < e^{-\alpha'/\beta}$.  Then the above
argument implies that $\SHS(\alpha' x^{k+1},\beta x^k, \gamma)\to0$
a.s., so Lemma~\ref{lem:monotone} implies that
$\SHS(f,\lambda,\gamma)\to0$ a.s. as well.
\qed

\subsection{Numerical results}

\newcommand{\tleft}{t_{\mathsf{left}}}
\newcommand{\tright}{t_{\mathsf{right}}}

In Figure~\ref{fig:Blowup} we plot the results of several Monte Carlo
simulations for blowups.  These simulations required a technique more
sophisticated than the Gillespie--Euler method used earlier. We are
attempting to simulate a system where nonlinear ODE are blowing up,
meaning that the vector fields get large and will be very sensitive to
discretization errors.  We implemented a two-phase method as follows:
if $X_t<10$, then we implemented a Euler--Gillespie method as
described before, with $\dtmax = 10^{-2}$. When $X_t\ge 10$, we
switched to a shooting method to obtain the next stopping time.  At
any $X_t$, we compute the time of singularity that would occur if
there were no jumps, call this $\tright$, and define $\tleft= 0$.
Choose $S_{n+1}$ as an exponential, and from this we can either
determine if we will have a Type II blowup before the next jump or
not.  If not, we then use a bisection method to find $T_{n+1}$: at any
stage in the process, we choose the midpoint of $[\tleft, \tright]$
and determine whether the integral in~\eqref{eq:defofT} is larger or
smaller than $S_{n+1}$; if smaller, we set $\tleft$ to be this
midpoint, and if larger, we set $\tright$ to be this midpoint.  This
guarantees an exponential convergence to $T_{n+1}$, and from this we
can compute all of the other quantities of interest.  Finally, we
always truncated whenever the system passed $10^9 \approx \exp(20.7)$
 ---  any time $X_t > 10^9$, we halted the computation.

In Figure~\ref{fig:Blowup}a, we plot $\log(X_t)$ for a single
realization of the SHS where we have chosen $f(x) = x^3$, $\lambda(x)
= 2 x^2$, and $\gamma = 0.75$.  One can see that there is a
finite-time blowup, and in fact we see that there are many jumps
happening in a very short time, as the trajectory goes off to
infinity.  This comports with the prediction of a Type I blowup.  In
Figure~\ref{fig:Blowup}b, we plot $\log(X_t)$ for a single realization
of the SHS where we have chosen $f(x) = x^4$, $\lambda(x) = 2 x^2$,
and $\gamma = 0.5$.  One can see that there is a finite-time blowup
here as well, but there are only a few jumps on the way to infinity;
this comports with the prediction of a Type II blowup.

In Figures~\ref{fig:Blowup}c and ~\ref{fig:Blowup}d, we present the results of a family of
simulations for Type I blowups.  Here we run each simulation for $10^4$
steps, or it has a finite-time blowup, and for each value of $\gamma$
we computed $10^2$ realizations. We chose $f(x) = x^3$, $\lambda(x) =
2x^2$ throughout, but vary $\gamma$ in the range $0.1,\dots,0.9$.
According to Theorem~\ref{thm:type1}, the critical values of $\gamma$
are $1/2$ and $e^{-1/2} \approx 0.6061$  ---  in both figures we have
put vertical red lines at these values.  

In both Figure~\ref{fig:Blowup}c and \ref{fig:Blowup}d we use the
plotting convention of plotting each simulation with a light blue
small circle, then plot the mean and standard deviation for all $10^2$
realizations for each gamma value in dark blue with a circle at the
mean and an error bar for the standard deviation.  In (c), we plot the
logarithm of the empirical mean of $X_t$ over the entire simulation,
recalling that we are truncating any trajectory that passes $10^9
\approx \exp(20.7)$.  Thus observations near or exceeding 20 are all
blowups.  We see that for $\gamma<1/2$, all realizations stay small
throughout the simulation.  In the range $[1/2, e^{-1/2}]$, there is a
spread of values depending on realization, and past $e^{-1/2}$ the
blowups dominate.  One can see this more starkly in
Figure~\ref{fig:Blowup}d: here we have plotted the final time of the
simulation --- note that we run all simulations for $10^4$ steps, and
the maximum threshold for the na\"{i}ve method is $\dtmax = 10^{-2}$
--- so if the trajectory always stays small, we would expect a total
simulation time very close to $10^4\cdot 10^{-2} = 100$.  Thus we can
interpret a final simulation time near $100$ as a proxy for a blowup
not occurring; conversely, if the simulation truncates significantly
earlier than $t=100$, this is a sign that the system has had a
finite-time blowup, and we see this clearly for large $\gamma$.

\begin{figure}[ht]
\begin{centering}
  \includegraphics[width=0.97\textwidth]{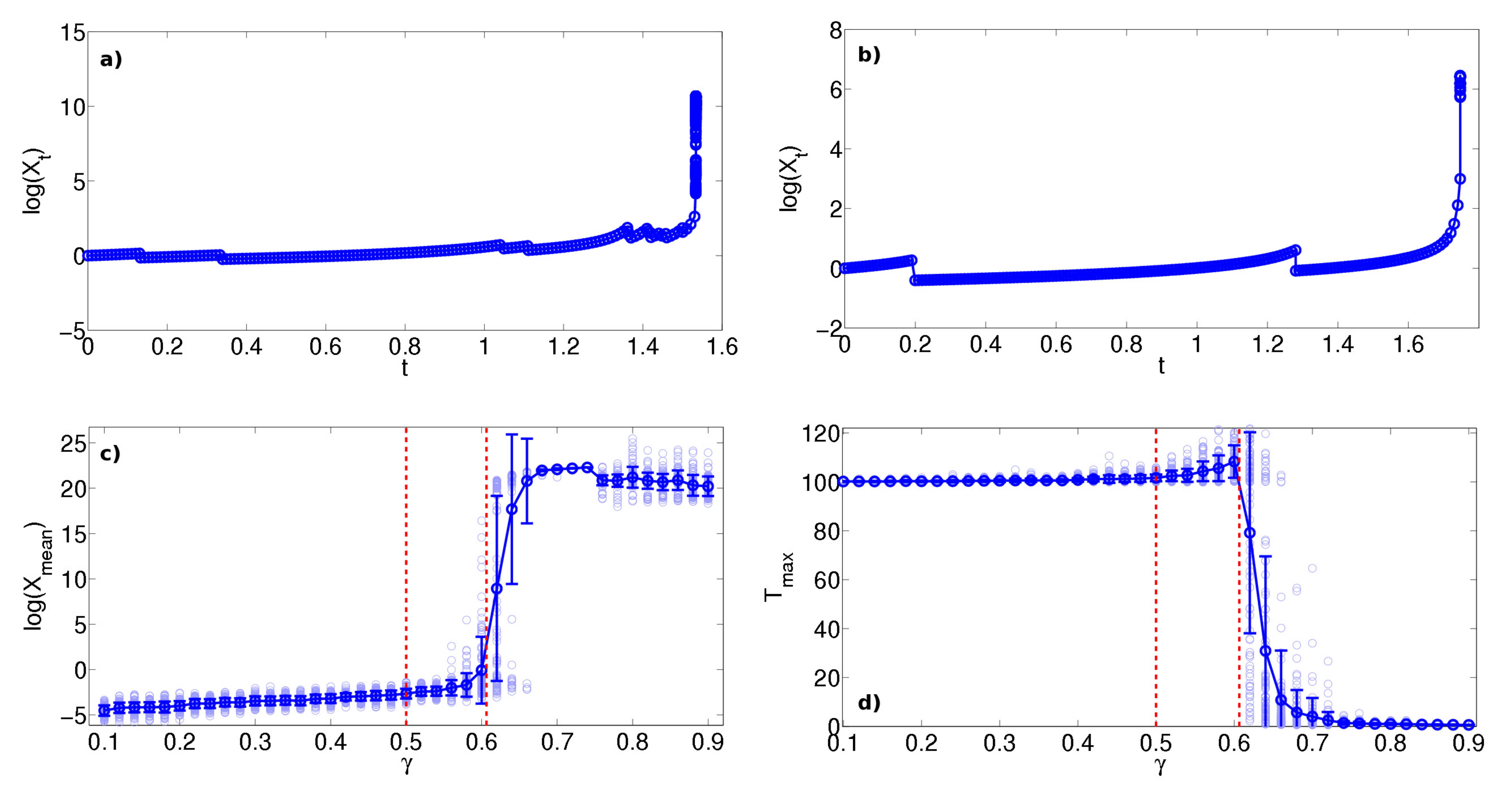}
  \caption{Results of simulations for SHS undergoing blowups; see text
    for more detailed description}
  \label{fig:Blowup}
\end{centering}
\end{figure}

\section{Exact computations} \label{sec:exact}

We found the recurrence relations~(\ref{eq:tn},~\ref{eq:eab}) useful
above in proving whether or not an SHS with certain parameters had
blowups or not.  In this section, we use recurrence relations to
compute exact invariant distributions for SHS, for certain choices of
$f$ and $\lambda$.

\subsection{General relation}

If we write $g(x) = \lambda(x) / f(x)$, and write $G'(x) = g(x)$, then
we compute:
\begin{align*}
  S_n 
  &= \int_0^{T_{n+1}-T_n} \lambda(\varphi^t(X_{T_n}))\,dt\\
  &= \int_0^{T_{n+1}-T_n} g(\varphi^t(X_{T_n}))f(\varphi^t(X_{T_n}))\,dt\\
  &= \int_0^{T_{n+1}-T_n} g(\varphi^t(X_{T_n}))\frac{d}{dt}\varphi^t(X_{T_n})\,dt\\
  &= G(\varphi^{T_{n+1}-T_n}(X_{T_n})) - G(X_{T_n}).
\end{align*}
Since $g(x) > 0$ for all $x$, $G(x)$ is increasing, so we can write
\begin{equation*}
  X_{T_{n+1}} = \gamma \varphi^{T_{n+1}-T_n}(X_{T_n}) = \gamma G^{-1}(S_n + G(X_{T_n})).
\end{equation*}

This identity holds true for any $f$, $\lambda$, but at the cost that
$g(x)$ might be a rational function and thus $G(x)$ could be quite
complicated.  We get a nice solution if $g(x)$ is a monomial: if we
assume that $g(x) = \delta x^p$, 
\begin{align*}
  (\varphi^{T_{n+1}-T_n}(X_{T_n}))^{p+1} - (X_{T_n})^{p+1} &= (p+1)\delta^{-1}S_n,\\
  \gamma^{p+1}(\varphi^{T_{n+1}-T_n}(X_{T_n}))^{p+1} &= \gamma^{p+1}((X_{T_n})^{p+1}+(p+1)\delta^{-1}S_n),\\
  X_{T_{n+1}}^{p+1}&= \gamma^{p+1}((X_{T_n})^{p+1}+(p+1)\delta^{-1}S_n).
\end{align*}
Writing $Y_{n} = (X_{T_n})^{p+1}$, $\zeta = \gamma^p$, and $\theta =
(p+1)/\delta$, gives the recursion
\begin{equation}\label{eq:Yrecursion}
  Y_{n+1} = (\zeta Y_n + \theta S_n).
\end{equation}
Note that $\zeta\in(0,1)$, since it is a positive power of $\gamma$,
and $\theta\in\R$.  We compute
\begin{equation*}
  Y_{n} = \zeta^n Y_0 + \theta\sum_{k=1}^n \zeta^k  S_k.
\end{equation*}
Writing the final sum as 
\begin{equation*}
  V_n = \theta\sum_{k=1}^n \zeta^k  S_k,
\end{equation*}
and assuming $X_0$ has finite moments of all orders, then for all
$m>0$,
\begin{equation*}
  \lim_{n\to\infty} \E[(Y_{n})^m] = \E[(V_n)^m],
\end{equation*}
and we can study the latter.  Similarly, as $n\to\infty$, the pdf for
$\zeta^n Y_0$ converges to $\delta(0)$, so
\begin{equation*}
  p_{Y_{n}}(z) = p_{\zeta^n Y_0 + V_n}(z) = p_{\zeta^n Y_0}(z) * p_{V_n}(z) \to p_{V_n}(z),
\end{equation*}
and thus it is sufficient to study $V_n$ whether we are interested in
particular moments, or a formula for its distribution.  We now write
down formulas for the moments and distribution of $V_n$.

\subsection{Moments of $V_n$}

\begin{theorem}\label{thm:moment}
  The $m$th moment of $V_n$ is given by
\begin{equation}\label{eq:Znmoment}
  \E[(V_n)^m]=\theta^m \cdot m! \sum_{k=m}^{nm} \pi_{n}^{(k,m)} \zeta^k,
\end{equation}
where $\pi_n^{(k,m)}$ is the number of partitions of $k$ into $m$
positive parts, each of size less than or equal to $n$.
\end{theorem}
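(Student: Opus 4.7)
The plan is to expand the $m$th power of $V_n$ directly and then reorganize the resulting sum by grouping together multi-indices that correspond to the same partition. Writing
\begin{equation*}
V_n^m = \theta^m \sum_{k_1=1}^n \cdots \sum_{k_m=1}^n \zeta^{k_1+\cdots+k_m} S_{k_1}\cdots S_{k_m},
\end{equation*}
and using independence of the $S_k$'s together with $\E[S_j^r] = r!$ for a unit-rate exponential, the expectation $\E[S_{k_1}\cdots S_{k_m}]$ depends only on the multiplicity profile $(r_1,\dots,r_n)$ of the multi-index $(k_1,\dots,k_m)$, and equals $\prod_j r_j!$.

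Next I would parameterize the sum by partitions. A partition $\lambda$ of an integer $k$ into exactly $m$ positive parts each of size at most $n$ determines a multiplicity profile $(r_1,\dots,r_n)$ with $\sum_j r_j = m$ and $\sum_j j r_j = k$. The number of ordered sequences $(k_1,\dots,k_m) \in \{1,\dots,n\}^m$ whose underlying partition is $\lambda$ is the multinomial coefficient $m!/\prod_j r_j!$. Hence
\begin{equation*}
\E[V_n^m] = \theta^m \sum_{k=m}^{nm} \zeta^k \sum_{\lambda \vdash (k,m,n)} \frac{m!}{\prod_j r_j!} \cdot \prod_j r_j! = \theta^m \cdot m! \sum_{k=m}^{nm} \pi_n^{(k,m)} \zeta^k,
\end{equation*}
where the factorials cancel cleanly and $\pi_n^{(k,m)}$ counts the partitions as described.

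The only thing to justify carefully is the range of summation: since each $k_i\in\{1,\dots,n\}$ and there are $m$ of them, the total $k = k_1+\cdots+k_m$ lies in $[m,nm]$, which gives the stated summation bounds. No step is really hard; the main obstacle is just being meticulous with the multiplicity bookkeeping so that the $\prod_j r_j!$ from $\E[S^{r_j}]$ cancels exactly with the $\prod_j r_j!$ in the denominator of the multinomial count, leaving the clean factor of $m!$. I do not anticipate any analytic difficulty, since all sums are finite and the interchange of sum and expectation is trivial.
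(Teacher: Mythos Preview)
Your argument is correct. You expand $V_n^m$ directly as an $m$-fold sum over $\{1,\dots,n\}^m$, use independence and $\E[S_j^{r_j}]=r_j!$ to evaluate each term, then group ordered tuples by their underlying multiset. The multinomial count $m!/\prod_j r_j!$ cancels exactly against $\prod_j r_j!$, leaving a clean $m!$ per partition, and summing over partitions with fixed total $k$ gives $\pi_n^{(k,m)}$. The range $m\le k\le nm$ follows immediately from $k_i\in\{1,\dots,n\}$.

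This is a genuinely different route from the paper's proof. The paper computes the moment generating function $\E[e^{tV_n}]=\prod_{k=1}^n (1-t\zeta^k)^{-1}$, expands each factor as a geometric series, and reads off the coefficient of $t^m$ as a sum over $n$-tuples $(\ell_1,\dots,\ell_n)$ with $\sum_k\ell_k=m$, weighted by $\zeta^{\sum_k k\ell_k}$; the partition count then emerges from the Diophantine system $\sum\ell_k=m$, $\sum k\ell_k=k$. Your approach is more elementary in that it avoids the generating-function machinery entirely and never invokes the MGF of an exponential; the key fact you use instead is $\E[S^r]=r!$, which is of course equivalent. The paper's approach has the advantage that the product form of the MGF makes the link to the classical partition generating function $\prod(1-\zeta^k)^{-1}$ transparent (which the paper exploits in the subsequent corollary and remark), whereas your method makes the cancellation of the $r_j!$ factors the combinatorial heart of the computation. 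Both land on the same Diophantine description of $\pi_n^{(k,m)}$.
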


\begin{proof}
  Wlog assume $\theta = 1$ by rescaling.  We
  use the standard ``generatingfunctionology'' approach here: we write
\begin{equation*}
  \E[e^{t V_n}] = \E[e^{t\sum_{k=1}^n \zeta^k S_k}] = \E[\prod_{k=1}^n e^{t\zeta^k S_k}] = \prod_{k=1}^n\E[e^{t\zeta^k S_k}]  = \prod_{k=1}^n \frac{1}{1-t \zeta^k},
\end{equation*}
where we have used independence of the $S_k$, and this formula is
valid if $\av t < \zeta^{-k}$.  We also have
\begin{equation*}
  \E[e^{tV_n}] = \E\left[\sum_{m=0}^\infty \frac{(tV_n)^m}{m!}\right] = \sum_{m=0}^\infty \frac{t^m}{m!}\E[(V_n)^m].
\end{equation*}
Thus the moment of interest is $m!$ times the coefficient of $t^m$ in
the power series of $\E[e^{tV_n}]$.

We have
\begin{align*}
  \prod_{k=1}^n \frac{1}{1-t \zeta^k}
  &= \prod_{k=1}^n \sum_{\ell=0}^\infty (t \zeta^k)^\ell= \sum_{\ell_1,\ell_2,\dots,\ell_n=0}^\infty \prod_{k=1}^n(t \zeta^k)^{\ell_k}\\
  &= \sum_{\ell_1,\ell_2,\dots,\ell_n=0}^\infty t^{\sum_{k=1}^n \ell_k} \zeta^{\sum_{k=1}^n k\ell_k}\\
  &= \sum_{m=0}^\infty t^m \sum_{\substack{\ell_1,\ell_2,\dots,\ell_n=0\\\ell_1+\dots+\ell_n = m}} \zeta^{\sum_{k=1}^n k\ell_k}.
\end{align*}
Therefore
\begin{equation*}
  \E[(V_n)^m] = m!\sum_{\substack{\ell_1,\ell_2,\dots,\ell_n=0\\\ell_1+\dots+\ell_n = m}} \zeta^{\sum_{k=1}^n k\ell_k}
\end{equation*}
The coefficient of $\zeta^k$ in this sum is given by the number
of integer solutions to the Diophantine system
\begin{equation}\label{eq:Diophantine}
\begin{split}
  \ell_1+\ell_2+\dots+\ell_n &= m,\\
  \ell_1+2\ell_2+\dots+n\ell_n &= k.
\end{split}
\end{equation}

This number is equal to the number of partitions of $k$ into $m$
positive parts, each less than $n$.  To see this, consider such a
partition, ordered increasingly, so that it contains $\ell_1$ $1$'s,
then $\ell_2$ 2's, all up to $\ell_n$ $n$'s.  Then by definition, the
$\ell_j$ must satisfy~\eqref{eq:Diophantine}.  Conversely, any choice
of $\ell_j$ satisfying~\eqref{eq:Diophantine} gives a partition in the
obvious manner.  This completes the proof.
\end{proof}

\begin{corollary}\label{cor:partition}
\begin{equation*}
  \E[(V_\infty)^m] = \theta^m \cdot m! \sum_{k=m}^\infty \pi_m(k)\zeta^k,
\end{equation*}
where  $\pi_m(k)$ is the standard partition function of
$k$ into $m$ positive parts~\cite{A008284}.
\end{corollary}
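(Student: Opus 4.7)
\textbf{Proof plan for Corollary~\ref{cor:partition}.}

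The plan is to pass to the limit $n\to\infty$ in Theorem~\ref{thm:moment}, carefully justifying the interchange of limit and sum on both sides. First I would verify that the limiting random variable $V_\infty := \theta\sum_{k=1}^\infty \zeta^k S_k$ is well-defined almost surely: since $\zeta\in(0,1)$ and $\E[S_k]=1$, the series of expectations $\sum_{k=1}^\infty \zeta^k$ is a finite geometric series, so by the monotone convergence theorem (the summands are nonnegative) the partial sums $V_n$ converge a.s.\ to a finite random variable $V_\infty$, and the convergence is monotone.

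Because $V_n \nearrow V_\infty$ with $V_n \ge 0$, for each fixed $m\in\N$ the sequence $V_n^m$ is nonnegative and monotone increasing to $V_\infty^m$, so monotone convergence gives $\E[V_n^m]\to\E[V_\infty^m]$. It remains to pass to the limit in the closed-form expression
\begin{equation*}
  \E[(V_n)^m] = \theta^m \cdot m! \sum_{k=m}^{nm} \pi_n^{(k,m)}\,\zeta^k.
\end{equation*}
Here I would observe two monotonicities in $n$: the upper limit $nm$ is increasing, and for each fixed $k$, the quantity $\pi_n^{(k,m)}$ (number of partitions of $k$ into $m$ positive parts, each $\le n$) is nondecreasing in $n$ and stabilizes once $n\ge k$, at which point every partition of $k$ automatically has all parts $\le k \le n$. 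Thus for each fixed $k\ge m$,
\begin{equation*}
  \pi_n^{(k,m)} \nearrow \pi_m(k) \quad\text{as } n\to\infty,
\end{equation*}
where $\pi_m(k)$ is the number of partitions of $k$ into exactly $m$ positive parts.

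Since $\zeta > 0$ and each $\pi_n^{(k,m)}\zeta^k$ is a nonnegative term that is nondecreasing in $n$, one more application of monotone convergence (for sums, viewing the sum over $k$ as an integral against counting measure on $\N$) yields
\begin{equation*}
  \sum_{k=m}^{nm} \pi_n^{(k,m)}\,\zeta^k \longrightarrow \sum_{k=m}^{\infty} \pi_m(k)\,\zeta^k.
\end{equation*}
Combining this with $\E[V_n^m]\to\E[V_\infty^m]$ yields the claimed identity. The convergence of the right-hand series is automatic from the left-hand convergence, but can also be checked directly by bounding $\pi_m(k)\le \binom{k-1}{m-1}$, so the series has geometric tail for any $\zeta\in(0,1)$. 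There is no serious obstacle here; the only subtle point is that both the upper summation limit and the coefficient $\pi_n^{(k,m)}$ depend on $n$, and the proof relies on both moving in the same (monotone) direction so that a single application of monotone convergence closes the argument cleanly.
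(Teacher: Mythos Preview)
Your argument is correct and is exactly the route the paper takes: the corollary is stated without proof as the $n\to\infty$ limit of Theorem~\ref{thm:moment}, using that $\pi_n^{(k,m)}=\pi_m(k)$ once $n\ge k$. Your write-up supplies the monotone-convergence justification that the paper leaves implicit; the only small point worth flagging is that your monotonicity step $V_n\nearrow V_\infty$ tacitly uses $\theta>0$, which holds in the paper's setup but could be stated (or bypassed by factoring out $\theta^m$ and reducing to $\theta=1$).
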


\begin{remark}
  It is not entirely surprising that partition numbers appear in this
  computation.  Again, let $\theta= 1$ for simplicity.  From
  Corollary~\ref{cor:partition} that the generating function of
  $V_\infty$ is the infinite product
  \begin{equation*}
    \E[e^{t V_\infty}] = \prod_{\ell=1}^\infty \frac{1}{1-t\zeta^\ell}.
  \end{equation*}
  But we have
  \begin{equation*}
    \E[e^{t V_\infty}] = \sum_{m=0}^\infty \frac{t^m}{m!} \E[(V_\infty)^m] = \sum_{m=0}^\infty \sum_{k=m}^\infty \pi_m(k) t^m \zeta^k= \sum_{k=0}^\infty \zeta^k \sum_{m=0}^{k-1} t^m\pi_m(k).
  \end{equation*}
  Clearly $\sum_{m=0}^{k-1} \pi_m(k)$ is the standard partition
  function $\pi(k)$, and setting $t=1$ we obtain
\begin{equation*}
  \prod_{\ell=1}^\infty \frac{1}{1-\zeta^\ell} = \sum_{k=0}^\infty \zeta^k\pi(k),
\end{equation*}
recovering the well-known formula for the partition function.

\end{remark}

\subsection{Distribution of $V_n$}

\begin{theorem}
  The probability distribution function of $V_n$ is given by
  \begin{equation*}
    p_{n}(z) = \sum_{k=1}^n A_{k,n}(\zeta) e^{-z/\zeta^k}\1(z>0),
  \end{equation*}
  with 
  \begin{equation*}
    A_{k,n}(\zeta) = \frac{(-1)^{k-1} \zeta^{k(k-3)/2}}{\displaystyle\prod_{\substack{j=1\\j\neq k}}^n (1-\zeta^{\av{j-k}})}.
  \end{equation*}
\end{theorem}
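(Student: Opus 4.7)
The random variable $V_n = \theta\sum_{k=1}^n \zeta^k S_k$ is a linear combination (with distinct positive coefficients, since $\zeta\in(0,1)$) of independent unit-rate exponentials; equivalently, it is a sum of independent exponentials $Y_k := \theta\zeta^k S_k$ with rates $r_k = 1/(\theta\zeta^k)$, all distinct. This is a hypoexponential distribution, and the standard route is via Laplace transforms plus partial fractions. As in the moment theorem, set $\theta=1$ without loss of generality (rescale $z\mapsto z/\theta$ at the end); the distributional result for general $\theta$ follows by the same substitution, and in fact the statement given has already absorbed $\theta=1$.

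First I would compute the Laplace transform
\begin{equation*}
\E\!\left[e^{-sV_n}\right] \;=\; \prod_{k=1}^n \E\!\left[e^{-s\zeta^k S_k}\right] \;=\; \prod_{k=1}^n \frac{1}{1+s\zeta^k},
\end{equation*}
valid for $s$ outside the poles $s=-\zeta^{-k}$. Because $\zeta\in(0,1)$ these $n$ poles are all distinct, so partial fractions give
\begin{equation*}
\prod_{k=1}^n \frac{1}{1+s\zeta^k} \;=\; \sum_{k=1}^n \frac{c_k}{1+s\zeta^k}, \qquad c_k \;=\; \prod_{\substack{j=1\\ j\neq k}}^n \frac{1}{1-\zeta^{j-k}},
\end{equation*}
where $c_k$ is found by the usual residue trick of multiplying by $(1+s\zeta^k)$ and evaluating at $s=-\zeta^{-k}$. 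Inverting term-by-term, and using that the inverse Laplace transform of $1/(1+s\zeta^k)$ is $\zeta^{-k}e^{-z/\zeta^k}\mathbf{1}(z>0)$, yields the preliminary density formula
\begin{equation*}
p_n(z) \;=\; \sum_{k=1}^n \frac{c_k}{\zeta^k}\, e^{-z/\zeta^k}\mathbf{1}(z>0).
\end{equation*}

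The remaining work is purely algebraic: I need to recast $c_k/\zeta^k$ in the asserted form $A_{k,n}(\zeta)$. Split the product defining $c_k$ into the $j<k$ and $j>k$ pieces. For $j>k$ the factors are already $1-\zeta^{|j-k|}$. For $j<k$ write
\begin{equation*}
1-\zeta^{j-k} \;=\; -\zeta^{-(k-j)}\bigl(1-\zeta^{k-j}\bigr) \;=\; -\zeta^{-|j-k|}\bigl(1-\zeta^{|j-k|}\bigr),
\end{equation*}
so that the $k-1$ sign factors combine to $(-1)^{k-1}$ and the $\zeta$-factors combine to $\zeta^{-(1+2+\cdots+(k-1))}=\zeta^{-k(k-1)/2}$. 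Substituting back and dividing by $\zeta^k$ produces the exponent $k(k-1)/2-k=k(k-3)/2$, which is precisely the target formula. This is the only delicate step; the main obstacle is not conceptual but bookkeeping, making sure the signs and exponent aggregate to the compact form in the statement.

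Two small technical points I would flag. First, the formula requires the poles $\{-\zeta^{-k}\}_{k=1}^n$ to be simple, which holds because $\zeta\in(0,1)$; the endpoint $\zeta=1$ would require a separate (and unneeded) degenerate analysis. Second, strictly speaking the Laplace inversion applies to continuous densities, but the $Y_k$ are absolutely continuous and their convolution is as well, so the pointwise formula is justified. With those observations in place and the algebraic reduction carried out, the claimed identity for $p_n(z)$ follows.
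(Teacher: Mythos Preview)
Your argument is correct. The Laplace transform factorizes as you wrote, the poles at $s=-\zeta^{-k}$ are simple for $\zeta\in(0,1)$, the residue computation gives exactly the $c_k$ you stated, and the algebraic bookkeeping for $j<k$ versus $j>k$ cleanly produces the sign $(-1)^{k-1}$ and the exponent $k(k-1)/2-k=k(k-3)/2$.

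The paper takes a different route: it proves the formula by induction on $n$, using the convolution $p_{n+1}=p_n\ast p_{\zeta^{n+1}S_{n+1}}$ to derive the recursion
\[
A_{k,n+1}=\frac{A_{k,n}}{1-\zeta^{n-k+1}}\quad(k\le n),\qquad A_{n+1,n+1}=\sum_{k=1}^{n}\frac{A_{k,n}}{\zeta^{n-k+1}-1},
\]
and then checks that the claimed closed form satisfies it. Your Laplace/partial-fraction approach is the standard hypoexponential derivation; it is more direct and, crucially, \emph{discovers} the closed form rather than merely verifying it, at the cost of a slightly more technical inversion step. The paper's inductive argument is more elementary (no transform theory needed) but requires the target formula as input. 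Either way the result is the same, and your proof would stand on its own.
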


\begin{proof}
  We prove this by induction.  Starting with $n=1$, we see that
  $A_{11}(\zeta) = 1/\zeta$. Since $V_1 = \zeta S_1$, and therefore
  \begin{equation*}
    \P(V_1 > z) = \P(\zeta S_1 > z)= \P(S_1 > z/\zeta) = e^{-z/\zeta}\1(z>0),\quad p_{1}(z) = -\frac{d}{dz}\P(V_1 > z) =\frac1\zeta e^{-z/\zeta}\1(z>0).
  \end{equation*}
  This proves the formula for $p_1(z)$.  Since $V_{n+1} = V_{n} + \zeta^{n+1}
S_{n+1}$, we have
\begin{align*}
  p_{n+1}(z) 
  &= \int_{-\infty}^\infty p_n(z-x) \zeta^{-(n+1)} e^{-x/\zeta^{n+1}}\1(x>0)\,dx\\
  &= \int_{-\infty}^\infty \sum_{k=1}^n A_{k,n}(\zeta) e^{-(z-x)/\zeta^k}\1(z-x>0)\zeta^{-(n+1)} e^{-x/\zeta^{n+1}}\1(x>0)\,dx\\
  &= \sum_{k=1}^n A_{k,n}(\zeta)e^{-z/\zeta^k} \int_0^z \zeta^{-(n+1)}\exp\left(\frac{\zeta^{n-k+1}-1}{\zeta^{n+1}}x\right)\,dx\\
  &= \sum_{k=1}^n A_{k,n}(\zeta)e^{-z/\zeta^k} \frac1{\zeta^{n-k+1}-1}\left(\exp\left(\frac{\zeta^{n-k+1}-1}{\zeta^{n+1}}z\right)-1\right)\\
  &= \sum_{k=1}^n A_{k,n}(\zeta)\frac{e^{-z/\zeta^{n+1}} - e^{-z/\zeta^k}}{\zeta^{n-k+1}-1}\\
  &= \sum_{k=1}^n \frac{A_{k,n}(\zeta)}{1-\zeta^{n-k+1}}e^{-z/\zeta^k} + e^{z/\zeta^{n+1}}\sum_{k=1}^n \frac{A_{k,n}(\zeta)}{\zeta^{n-k+1}-1}.
\end{align*}
This gives us the recursion
\begin{equation*}
  A_{k,n+1} = \frac{A_{k,n}}{1-\zeta^{n-k+1}},\quad k < n+1;\quad A_{n+1,n+1} = \sum_{k=1}^{n}\frac{A_{k,n}}{\zeta^{n-k+1}-1}.
\end{equation*}
By plugging in, we see that the formula for $A_{k,n}$ satisfies the
recursion relation, and we are done.
\end{proof}

\begin{corollary}
  The limit 
\begin{equation*}
  A_k(\zeta) := \lim_{n\to\infty} A_{k,n}(\zeta)
\end{equation*}
exists for all $\zeta\in(0,1)$ (and in fact  convergences 
uniformly), so that the distribution for $V_\infty$ is
\begin{equation*}
  p_{V_\infty}(z) = \sum_{k=1}^\infty A_k(\zeta) e^{-z/\zeta^k}.
\end{equation*}
\end{corollary}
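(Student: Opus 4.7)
The plan is to separate the corollary into two claims and handle them in sequence: first establish existence and (locally) uniform convergence of $A_{k,n}(\zeta)\to A_k(\zeta)$ by analyzing the infinite product in the denominator, and then pass the density formula from $V_n$ to $V_\infty$ using the almost-sure convergence $V_n \nearrow V_\infty$ together with a dominated-convergence argument.

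For the first claim, I would reindex by $m=|j-k|$ to rewrite
\begin{equation*}
  A_{k,n}(\zeta) = \frac{(-1)^{k-1}\zeta^{k(k-3)/2}}{\displaystyle\prod_{m=1}^{k-1}(1-\zeta^m)\,\prod_{m=1}^{n-k}(1-\zeta^m)},
\end{equation*}
so that only the second product in the denominator depends on $n$. Because $\sum_m \zeta^m<\infty$ for $\zeta\in(0,1)$, the infinite product $\prod_{m=1}^{\infty}(1-\zeta^m)$ converges to a strictly positive value, giving existence of $A_k(\zeta)$ and a closed form. On any compact $K\subset(0,1)$ there is $\zeta_0<1$ dominating $\zeta$, and the tail estimate $\bigl|\log\prod_{m=N}^{\infty}(1-\zeta^m)\bigr|\le \zeta_0^N/(1-\zeta_0)^2$ yields uniform convergence of $A_{k,n}\to A_k$ on $K$ for each $k$. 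Moreover, the bound $|A_k(\zeta)|\le C(\zeta)\,\zeta^{k(k-3)/2}$ follows from the fact that the denominator is bounded below uniformly in $k$, so the series $\sum_k A_k(\zeta)e^{-z/\zeta^k}$ itself converges absolutely and uniformly for $z\ge 0$ (since $e^{-z/\zeta^k}\le 1$), with $\zeta^{k(k-3)/2}$ providing Gaussian-type decay.

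For the density claim, I would use that $V_n = \theta\sum_{k=1}^n \zeta^k S_k$ is monotone in $n$ with $\E[V_\infty]=\theta\zeta/(1-\zeta)<\infty$, so $V_n\nearrow V_\infty$ almost surely and in $L^1$. To pass from the explicit $p_n$ to a limiting density, I would show pointwise convergence $p_n(z)\to p_\infty(z):=\sum_{k=1}^\infty A_k(\zeta)e^{-z/\zeta^k}$ by splitting the finite sum at a large $K$: the head converges by the first step, and the tail is bounded by $\sum_{k>K}C\,\zeta^{k(k-3)/2}$ uniformly in $n$. Then, noting that for $z>0$ and $k\ge 1$ we have $\zeta^k\le\zeta$, hence $e^{-z/\zeta^k}\le e^{-z/\zeta}$, I would get the dominating bound $|p_n(z)|\le M(\zeta)\,e^{-z/\zeta}$, which is integrable. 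Dominated convergence therefore gives $\int_0^{\infty}p_\infty(z)\,dz=1$, and combined with pointwise convergence one concludes via Scheffé (or by integrating $p_n$ against $\mathbf{1}_{[0,z]}$ and passing to the limit to identify the CDF of $V_\infty$) that $p_\infty$ is the density of $V_\infty$.

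The main subtlety is that the series for $p_\infty$ has alternating signs coming from the factor $(-1)^{k-1}$, so nonnegativity of the limiting function is not visible term-by-term and must be inferred a posteriori from the fact that it agrees almost everywhere with the density of a nonnegative random variable. I would avoid trying to prove nonnegativity directly; instead, the integrable dominator $e^{-z/\zeta}$ serves only to transfer the identity $\int p_n=1$ to the limit and to identify the CDF, after which nonnegativity follows for free. The bounds on $A_k(\zeta)$ from the first step are what make every subsequent interchange of limit and sum legitimate, so if any step resists it is most likely the uniform-in-$n$ tail estimate on $\sum_{k>K}|A_{k,n}(\zeta)|e^{-z/\zeta^k}$, which must be checked carefully but follows from $\prod_{m=1}^{n-k}(1-\zeta^m)\ge \prod_{m=1}^{\infty}(1-\zeta^m)$ being a uniform lower bound.
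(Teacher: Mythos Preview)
The paper does not supply a proof of this corollary; it is stated immediately after the closed form for $A_{k,n}(\zeta)$ and is evidently meant to follow at once from that formula, with the subsequent Remark recording the superexponential decay $A_k(\zeta)/A_{k-1}(\zeta)\approx \zeta^{k-2}$. Your argument is a correct and careful filling-in of exactly this line: the reindexing $m=|j-k|$ isolates the $n$-dependence in a single partial product of $\prod_m(1-\zeta^m)$, whose convergence is classical, and the uniform-in-$k$ lower bound $\prod_{m=1}^{k-1}(1-\zeta^m)\ge\prod_{m=1}^\infty(1-\zeta^m)>0$ yields $|A_{k,n}(\zeta)|\le C(\zeta)\,\zeta^{k(k-3)/2}$ uniformly in $n$, which is what drives all of the limit interchanges.

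Where you go beyond the paper is in rigorously identifying $p_\infty$ as the density of $V_\infty$ rather than merely the pointwise limit of $p_n$; the paper does not address this step at all. Your dominated-convergence/Scheff\'e route is correct and is the natural way to close that gap. One small remark: the monotonicity $V_n\nearrow V_\infty$ uses $\theta>0$, which is implicit in the paper (the $p_n$ are supported on $z>0$) but not stated; if you prefer to avoid that assumption, almost-sure and $L^1$ convergence of $V_n\to V_\infty$ already follow from $\E\bigl|\sum_{k>n}\zeta^k S_k\bigr|\to 0$, and the rest of your argument goes through unchanged.
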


\begin{remark}
We have the following formulas for $A_k(\zeta)$:
\begin{equation*}
  A_1(\zeta) = \frac1\zeta \frac1{\prod_{k=1}^\infty (1-\zeta^k)} = \frac1\zeta\sum_{k=1}^\infty \sum_{j=1}^\infty \zeta^{jk} = \frac1\zeta\sum_{n=1}^\infty \varphi(n) \zeta^n,
\end{equation*}
where $\varphi(n)$ is the number of distinct factors of the integer
$n$.  For $k>1$, we have that
\begin{equation*}
  \frac{A_k(\zeta)}{A_1(\zeta)} =  \frac{(-1)^{k-1}\zeta^{\frac{(k-2)(k-1)}2}}{\prod_{j=1}^{k-1} (1-\zeta^j)}.
\end{equation*}
Moreover, for any fixed $\zeta\in(0,1)$,
\begin{equation*}
  \lim_{k\to\infty} A_k(\zeta) = 0,
\end{equation*}
and in fact, for fixed $\zeta$ and $k$ sufficiently large,
\begin{equation*}
  \frac{A_k(\zeta)}{A_{k-1}(\zeta)} \approx \zeta^{k-2},
\end{equation*}
so that $A_k(\zeta)$ goes to zero superexponentially fast as $k\to\infty$.
\end{remark}

\section{Conclusions}\label{sec:outtro}

We have presented many results for a class of SHS and examined the
tension between instability arising from the flows and stability
arising from the resets.  The types of dynamical phenomena that we
have observed share many features with both deterministic dynamical
systems and stochastic processes.

One common feature of all of the systems studied here was that the
state space was one-dimensional.  Extending the types of results in
this paper to higher dimensions could be challenging due to
non-commutativity of flows.

\section{Acknowledgments}

This work was supported in part by the National Science Foundation
(NSF) under awards ECCS-CAR-0954420, CMG-0934491, and
CyberSEES-1442686, the Trustworthy Cyber Infrastructure for the Power
Grid (TCIPG) under US Department of Energy Award DE-OE0000097, by the
National Aeronautics and Space Administration through the NASA
Astrobiology Institute under Cooperative Agreement No. NNA13AA91A
issued through the Science Mission Directorate, and the Initiative for
Mathematical Science and Engineering (IMSE) at the University of
Illinois.

\end{document}